\newtheorem*{MB*}{Malle-Bhargava Prediction}
\title{Statistics of the First Galois Cohomology Group:\\ A Refinement of Malle's Conjecture}
\author{Brandon Alberts}
\date{}
\begin{document}

\maketitle

\abstract{Malle proposed a conjecture for counting the number of $G$-extensions $L/K$ with discriminant bounded above by $X$, denoted $N(K,G;X)$, where $G$ is a fixed transitive subgroup $G\subset S_n$ and $X$ tends towards infinity. We introduce a refinement of Malle's conjecture, if $G$ is a group with a nontrivial Galois action then we consider the set of crossed homomorphisms in $Z^1(K,G)$ (or equivalently $1$-coclasses in $H^1(K,G)$) with bounded discriminant. This has a natural interpretation given by counting $G$-extensions $F/L$ for some fixed $L$ and prescribed extension class $F/L/K$.

If $T$ is an abelian group with any Galois action, we compute the asymptotic growth rate of this refined counting function for $Z^1(K,T)$ (and equivalently for $H^1(K,T)$) and show that it is a natural generalization of Malle's conjecture. The proof technique is in essence an application of a theorem of Wiles on generalized Selmer groups, and additionally gives the asymptotic main term when restricted to certain local behaviors. As a consequence, whenever the inverse Galois problem is solved for $G\subset S_n$ over $K$ and $G$ has an abelian normal subgroup $T\normal G$ we prove a nontrivial lower bound for $N(K,G;X)$ given by a nonzero power of $X$ times a power of $\log X$. For many groups, including many solvable groups, these are the first known nontrivial lower bounds. These bounds prove Malle's predicted lower bounds for a large family of groups, and for an infinite subfamily they generalize Kl\"uners' counter example to Malle's conjecture and verify the corrected lower bounds predicted by T\"urkelli.}

\newpage

\tableofcontents

\newpage

\section{Introduction}

Number field counting problems began by asking questions about how many number fields there are with bounded discriminant. In the study of this topic, the problem naturally partitioned into counting number fields with prescribed Galois group. Malle \cite{malle2002,malle2004} collected this problem together under the roof of a single conjecture. Let $K$ be a number field and $G_K=\Gal(\overline{K}/K)$ be its absolute Galois group throughout the paper. If $L/K$ is a degree $n$ extension, we refer to the Galois group $\Gal(L/K)\subset S_n$ as the Galois group of the Galois closure of $L/K$ together with the action permuting the $n$ embeddings of $L$ into the algebraic closure $\overline{K}$. If $G\subset S_n$ is a transitive permutation group, we may ask how many degree $n$ extensions $L/K$ there are with $\Gal(L/K)\cong G$ and bounded discriminant, i.e. what is the size of the counting function
\[
N'(K,G;X):= \#\{L/K : [L:K]=n,\Gal(L/K)\cong G, \mathcal{N}_{K/\Q}(\disc(L/K))<X\}.
\]
Malle gave theoretical evidence in \cite{malle2004} suggesting how this function should grow asymptotically as $X$ tends to infinity. This is often referred to as the ``Strong Form" of Malle's conjecture.

\begin{conjecture}[Strong Form of Malle's Conjecture]
Let $G\subset S_n$ be a transitive subgroup and define the class function $\ind:G\rightarrow \Z$ by $\ind(g)= n- \#\{\text{orbits of }g\}$. Then
\[
N'(K,G;X)\sim c(K,G)X^{1/a(G)}(\log X)^{b(K,G)-1},
\]
where $c(K,G)>0$, $a(G) = \min_{g\ne 1} \ind(g)$ and
\[
b(K,G) = \#\left(\{\text{conjugacy class }C\subset G : \ind(C) = a(G)\} / \chi\right)
\]
is the number of orbits under the action by the cyclotomic character $\chi:G_K\rightarrow \hat{\Z}$ on the set of conjugacy classes, where the action is given by $\sigma.g=g^{\chi(\sigma)}$.
\end{conjecture}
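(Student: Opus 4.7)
Since the full conjecture is widely open, my plan is to attack it in cases where $G$ has enough solvable structure to be accessible, specifically when $G$ admits a nontrivial abelian normal subgroup $T\normal G$. The starting idea is to decompose a surjection $\rho\colon G_K\twoheadrightarrow G$ through the filtration $1\to T\to G\to G/T\to 1$: fix a $G/T$-extension $L'/K$ which lifts to a $G$-extension, and parametrize the possible lifts by twisted homomorphisms from $G_{L'}$ into $T$. Because $G/T$ acts on $T$ by conjugation, lifts are not classified by $\mathrm{Hom}(G_{L'},T)$ but rather by the first cohomology $H^1(K,T)$ with the induced Galois module structure on $T$; this is precisely the refinement of Malle's conjecture that the introduction advertises. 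The counting problem therefore splits into (a) counting $G/T$-extensions of $K$, and (b) for each such base extension, counting crossed homomorphisms in $Z^1(K,T)$ with bounded discriminant.

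For step (b) I would invoke class field theory together with Wiles' theorem on generalized Selmer groups. Global duality expresses the group of $1$-cocycles satisfying prescribed local conditions as a lattice inside a product of local cohomology groups, and the conductor-discriminant formula translates a bound on $\mathcal{N}_{K/\Q}\disc$ into a system of local conductor bounds. The minimum of $\ind(g)$ over nontrivial $g\in T$ should then control the dominant exponent $1/a(G)$, since the smallest local conductor contribution is achieved by inertia generators of minimal index. The power of $\log X$ should emerge as the rank of a certain Selmer-type dual, which by Wiles' formula is exactly the number of orbits of the cyclotomic character acting on the relevant conjugacy classes, producing the $b(K,G)-1$ exponent. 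Combined, this yields the predicted asymptotic for $Z^1(K,T)$, and hence at least a lower bound on $N'(K,G;X)$ obtained by twisting a fixed lift.

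The main obstacle is that nothing in this plan controls extensions without an abelian normal layer, and it does not produce matching upper bounds in general, so the full conjecture is out of reach by this route. Even within the abelian-normal-subgroup regime, two more delicate issues must be addressed: first, one must guarantee that enough twists of a fixed lift $L'/K$ yield genuine $G$-extensions rather than degenerations to proper subgroups of $G$, which is where the inverse Galois hypothesis enters; and second, one must verify that the $b(K,G)$-orbit count really is the rank produced by the cyclotomic action on the Selmer dual. I expect the latter to be the principal technical hurdle, since it is where the fine structure of local Galois representations must be matched against the combinatorial invariants appearing in Malle's predicted exponents.
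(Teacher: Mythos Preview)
The statement you are trying to prove is a \emph{conjecture}, and the paper does not claim to prove it; on the contrary, the paper emphasizes (via Kl\"uners' counterexample $C_3\wr C_2$) that the strong form is \emph{false} as stated. So there is no ``paper's own proof'' to compare against. That said, your outlined strategy is essentially the one the paper carries out to obtain its partial results (Theorem \ref{thm:MalleZ1} and Corollaries \ref{cor:alpha}, \ref{cor:lowerbound}): decompose through a tower with abelian kernel $T$, parametrize lifts by $1$-cocycles, and apply Wiles' theorem on generalized Selmer groups to count them.

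There is, however, a genuine error in your identification of the $\log X$ exponent. You assert that Wiles' formula produces ``exactly the number of orbits of the cyclotomic character acting on the relevant conjugacy classes, producing the $b(K,G)-1$ exponent.'' This is precisely where the method diverges from Malle's prediction. The action that actually emerges from the Euler product analysis is the \emph{composite} action $\pi*\chi^{-1}$, where $\pi$ is conjugation by the fixed lift and $\chi$ is the cyclotomic character; the resulting exponent is $b(K,T(\pi))-1$, not $b(K,G)-1$. When the conjugation action and the cyclotomic action overlap (as when $L=\Q(\zeta_3)$ in the $C_3\wr C_2$ example), $b(K,T(\pi))$ is strictly larger than $b(K,G)$, and your approach yields a \emph{lower bound exceeding} Malle's predicted asymptotic. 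So rather than verifying the conjecture, your method disproves it in these cases and instead supports T\"urkelli's corrected exponent $B(K,G)$. A smaller slip: lifts are parametrized by crossed homomorphisms $G_K\to T(\pi)$ (Lemma \ref{lem:Z1}), not by maps out of $G_{L'}$.
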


We remark that, for any transitive subgroup $G\subset S_n$, there are two different, yet equivalent, conventions for counting number fields with Galois group $G\subset S_n$. This is described by the correspondence below:
\begin{align*}
\left\{\substack{\ds L/K \text{ degree }n\text{ with }\Gal(\widetilde{L}/K)\cong G\\\ds\text{ordered by }\disc(L/K)}\right\} \leftrightarrow\left\{\substack{\ds \widetilde{L}/K \text{ Galois with }\Gal(\widetilde{L}/K)\cong G\\\ds\text{ordered by the discriminant}\\\ds\text{of a subfield fixed}\\\ds\text{by a point stabilizer}}\right\}\,,
\end{align*}
where the correspondence is $\frac{|G|}{n}$-to-$1$. Thus, up to a constant, counting number fields from either perspective amounts to an equivalent result. A majority of authors working in the area of number field counting pick one perspective and stick with it, and we will do the same. It will be convenient to deal only with Galois extensions, so we will work on the right hand side of this correspondence. All field extensions will be Galois unless stated otherwise, and a $\mathbf{G}$\textbf{-extension} $L/K$ will refer to a Galois extension with Galois group $G$ with discriminant ordering given by $\disc(L^{{\rm Stab}_G(1)}/K)$. We define
\[
N(K,G;X) :=\#\left\{L/K \text{ Galois}: \Gal(L/K)\cong G,\ \disc(L^{{\rm Stab}_G(1)}/K)<X\right\},
\]
and knowing that this counting function is only off from Malle's original formulation by a constant factor implies that Malle's conjecture for $N'(K,G;X)$ is equivalent to the same statement for $N(K,G;X)$.

The strong form of Malle's conjecture is known to be true in the following cases:
\begin{itemize}
\item{$G$ abelian was proven by Wright \cite{wright1989},}

\item{$G=S_n$ for $n=3$ by Datskovsky-Wright \cite{datskovsky-wright1988} and $n=4,5$ by Bhargava-Shankar-Wang \cite{bhargava-shankar-wang2015},}

\item{$S_3\subset S_6$ by Bhargava-Wood \cite{bhargava-wood2007},}

\item{$D_4\subset S_4$ over $K=\Q$ by Cohen-Diaz y Diaz-Olivier \cite{cohen-diaz-y-diaz-olivier2002},}

\item{$Q_{4m}\subset S_{4m}$ the generalized quaternion group of order $4m$ by Kl\"uners \cite{klunersHab2005},}

\item{$C_2\wr H$ for many groups $H$ by Kl\"uners \cite{kluners2012},}

\item{$S_n\times A$ for $n=3,4,5$ and $|A|$ coprime to $2,6,30$ respectively by Wang \cite{jwang2017},}

\item{$D_4\subset S_8$ in an upcoming preprint by Shankar-Varma \cite{shankar-varma2019},}

\item{$T\wr B$ for $T$ either abelian or $S_3$ and $B$ any group such that $N(K,B;X)$ grows sufficiently slowly in an upcoming preprint of Lemke Oliver-Wang-Wood \cite{lemke-oliver-jwang-wood2019}.}
\end{itemize}
Unfortunately, Malle's conjecture is not true in general. Kl\"uners showed that for $G=C_3\wr C_2$ and $K=\Q$ Malle's predicted log factor is too small \cite{kluners2005}.

The power of $X$ factor is generally believed to be correct, which leads many authors to consider a weaker version of this conjecture proposed in Malle's earlier paper \cite{malle2002}:

\begin{conjecture}[Weak Form of Malle's Conjecture]
Let $G\subset S_n$ be a transitive subgroup and define the class function $\ind:G\rightarrow \Z$ by $\ind(g)= n- \#\{\text{orbits of }g\}$. Then
\[
X^{1/a(G)}\ll N(K,G;X) \ll X^{1/a(G)+\epsilon},
\]
where $a(G) = \min_{g\ne 1} \ind(g)$.
\end{conjecture}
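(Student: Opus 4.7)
The plan is to attack the two inequalities by different techniques. For the lower bound $X^{1/a(G)}\ll N(K,G;X)$, I will exploit normal-subgroup structure in $G$ together with Wiles' theorem on generalized Selmer groups. For the upper bound $N(K,G;X)\ll X^{1/a(G)+\epsilon}$, I will proceed by strong induction on $|G|$ via a conductor-discriminant fibration over a quotient $G/N$.

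\textbf{Lower bound.} Choose a nontrivial abelian normal subgroup $T\normal G$ with quotient $B=G/T$; such $T$ exists for any solvable $G$, and for general $G$ one works instead with the socle or handles nonabelian simple socles separately. A $G$-extension $L/K$ containing a fixed $B$-extension $M=L^T/K$ corresponds to a crossed homomorphism in $Z^1(K,T)$ with the natural $B$-action on $T$ pulled back from $M/K$, and the paper's refined counting theorem (via Wiles on Selmer groups) gives the asymptotic count of such crossed homomorphisms of bounded discriminant. Fixing an element $g_0\in G$ realizing $a(G)=\ind(g_0)$ and arranging the local conditions at one varying prime $\mathfrak{p}$ so that inertia is conjugate to $\langle g_0\rangle$, I sum the resulting counts over admissible $\mathfrak{p}$ with $\mathcal{N}(\mathfrak{p})\leq X^{1/a(G)}$; this produces $\gg X^{1/a(G)}$ distinct $G$-extensions with $\disc<X$.

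\textbf{Upper bound.} Take a minimal normal subgroup $N\normal G$ and set $B=G/N$. Every $G$-extension $L/K$ projects to a $B$-extension $M=L^N/K$, and the conductor-discriminant formula expresses $\disc(L/K)$ as a product of a power of $\disc(M/K)$ and the product of local conductors of the relative extension $L/M$. The inequality $\disc(L/K)\leq X$ then forces $\disc(M/K)\leq X^{c_1}$ for an explicit exponent $c_1=c_1(G,N)$ computed from the index functions on $G$ and $B$. The inductive hypothesis bounds the number of admissible $M$ by $\ll X^{c_1/a(B)+\epsilon}$; for each such $M$, I bound the number of $N$-lifts with prescribed discriminant by counting $N$-extensions of the varying base $M$, using Wright's theorem when $N$ is abelian and inducting on $|G|$ when $N$ is not. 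A careful exponent bookkeeping, using that $a(G)$ is the minimum of $\ind$ over nontrivial conjugacy classes and that the worst ramification type in $G$ comes either from $B$ or from the relative $N$-extension, should collapse the total to $\ll X^{1/a(G)+\epsilon}$.

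The main obstacle is the upper bound, which is genuinely open in general. Two difficulties dominate. First, the inductive step requires counting $N$-extensions of a varying base field $M$ with a uniform Malle-type bound in $\disc(M/K)$; even for abelian $N$, uniform power-saving bounds over varying bases are not presently known in the generality required, so the naive induction loses logarithms (or worse) at every stage. Second, when $G$ is nonabelian simple the fibration approach cannot begin: there is no proper normal subgroup to quotient by, and the only unconditional bounds in this range are Schmidt's $N(K,G;X)\ll X^{(|G|+2)/4}$ and the improvement of Ellenberg-Venkatesh to roughly $X^{\exp(C\sqrt{\log|G|})}$, both far weaker than $X^{1/a(G)+\epsilon}$. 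The lower bound is comparatively more tractable but still presupposes solving the inverse Galois problem for $G$ over $K$, so it too is only fully accessible for a restricted family of $G$; the core contribution of the paper is to make this lower bound argument rigorous and quantitative whenever an abelian normal subgroup is available.
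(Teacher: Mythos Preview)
The statement you are attempting to prove is a \emph{conjecture}, not a theorem, and the paper does not prove it. It is stated in the introduction as the Weak Form of Malle's Conjecture, followed by a list of partial results in the literature; there is no proof in the paper to compare against. Your own proposal correctly diagnoses this: you note that the upper bound is ``genuinely open in general,'' that the inductive step requires uniform bounds over varying base fields that are not known, and that for nonabelian simple $G$ the fibration approach cannot even begin. These are accurate assessments of why the conjecture remains open.

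On the lower bound side, your sketch is roughly aligned with what the paper actually does prove (Corollary~\ref{cor:lowerbound}), but with important caveats you partially acknowledge. The paper's lower bound requires (i) the existence of at least one $G$-extension of $K$ (i.e.\ a solved embedding problem, which for general $G$ presupposes the inverse Galois problem), and (ii) a nontrivial abelian normal subgroup $T\normal G$. Under those hypotheses the paper obtains $N(K,G;X)\gg X^{1/a(T)}(\log X)^{b(K,T(\pi))-1}$, which matches Malle's predicted lower bound only when some element of minimal index lies in $T$. Your suggestion to ``handle nonabelian simple socles separately'' and to arrange local inertia conjugate to $\langle g_0\rangle$ at a varying prime is not something the paper does, and is not a complete argument: for $G$ with trivial abelian normal subgroup (e.g.\ nonabelian simple $G$) the paper's method gives nothing, and no general lower bound of the form $X^{1/a(G)}$ is known.

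In short: there is no proof in the paper for you to match, your upper-bound strategy is a reasonable outline of why the problem is hard rather than a proof, and your lower-bound strategy recovers only the conditional partial result the paper actually establishes.
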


The weak form has no known counterexamples, and more information is known for several different groups $G$.
\begin{itemize}
\item{The weak form holds for $G$ nilpotent in the regular representation by Kl\"uners-Malle \cite{kluners-malle2004},}

\item{The upper bound holds for $G$ a $p$-group by Kl\"uners-Malle \cite{kluners-malle2004},}

\item{The lower bound holds for $G=D_p$ for $p$ an odd prime in the degree $p$ and $2p$ representations over $K=\Q$ by Kl\"uners \cite{kluners2006}, as well as the upper bound conditional on Cohen-Lenstra heuristics,}

\item{The upper bound holds for $G$ nilpotent in any representation by the author in \cite{alberts2020},}

\item{The upper bound holds for $G$ solvable in any representation conditional on the $\ell$-torsion conjecture for class groups by the author in \cite{alberts2020}.}

\end{itemize}

There are nontrivial upper bounds for all groups $G$ which are not believed to be sharp. Such examples can be found in papers by the author \cite{alberts2020}, Dummit \cite{dummit2018}, Ellenberg-Venkatesh \cite{ellenberg-venkatesh2005}, and Schmidt \cite{schmidt1995}. Nontrivial lower bounds tend to be rarer in the literature, and are not known for every group (such a result would solve the inverse Galois problem!). Besides the groups listed above for which Malle's predicted lower bound is known, the author is only aware of the following nontrivial lower bounds:
\begin{itemize}
\item{If the inverse Galois problem is solved for $G$ over $K$ and $Z\le G$ is a central subgroup, then $N(K,G;X) \gg X^{\frac{\ell}{n(\ell-1)}}$ for $\ell$ the smallest prime dividing $|Z|$ by Kl\"uners-Malle \cite{kluners-malle2004},}

\item{$N(\Q,S_n;X)\gg X^{1/n}$ by Malle \cite{malle2002},}

\item{$N(K,S_n;X)\gg X^{\frac{1}{2}-\frac{1}{n^2}}$ by Ellenberg-Venkatesh \cite{ellenberg-venkatesh2005},}

\item{$N(\Q,S_n;X)\gg X^{\frac{1}{2}+\frac{1}{n}}$ by Bhargava-Shankar-Wang \cite{bhargava-shankar-wang2016},}

\item{$N(\Q,A_4;X)\gg X^{\frac{1}{2}}$ by Baily \cite{baily1980},}

\item{$N(\Q,A_n;X)\gg X^{\frac{n! - 2}{n!(4n-4)}}$ by Pierce-Turnage-Butterbaugh-Wood \cite{pierce-turnage-butterbaugh-wood2017},}

\item{$N(\Q,G;X)\gg X^{\frac{|G|-1}{d|G|(2n-2)}}$ whenever there exists a regular polynomial in $\Q[X,T_1,...,T_s]$ with Galois group $G$ and degree $\le d$ in the $T$ variables by Pierce-Turnage-Butterbaugh-Wood \cite{pierce-turnage-butterbaugh-wood2017}.}
\end{itemize}

Kl\"uners-Malle show that their bound realizes Malle's predicted lower bounds if $G$ is a nilpotent group in the regular representation, and comment that it realizes the predicted lower bound in some other cases (for example, any group $C_2\times H$ in the regular representation).

One of the modern approaches to Malle's conjecture is inductively counting extensions. These methods are used to prove Malle's conjecture in the only large families of nonabelian groups for which the conjecture is known, namely $C_2\wr H$, $S_n\times A$, and $T\wr B$ for $N(K,B;X)$ growing slow enough, as well as for both transitive representations of $D_4$. Fix a finite group $G$, and say we want to count $G$-extensions $F/K$ ordered by some invariant (recall our convention that a $G$-extension is always Galois, and special ordering s correspond to different transitive representations of $G$). If $G$ is not a simple group, we could potentially break down this counting problem into two separate counting problems. An upcoming preprint by Lemke Oliver-Wang-Wood \cite{lemke-oliver-jwang-wood2019} formally introduces this approach, and we adopt their very intuitive notation. Suppose $T\normal G$ is a normal subgroup with quotient group $G/T = B$. Any $G$-extension $F/K$ decomposes into a tower of fields:
\begin{align}\label{tower}
\begin{tikzpicture}[node distance =2cm, auto]
	\node(K) {$K$};
	\node(L) [above right of =K] {$L$};
	\node(F) [above left of=L] {$F$};
	\draw[-] (L) to node {$B$} (K);
	\draw[-] (F) to node {$T$} (L);
	\draw[-] (K) to node {$G$} (F);
\end{tikzpicture}
\end{align}
We may think of ``$T$" as standing for ``top extension" and ``$B$" as standing for ``bottom extension" to help us keep track of the notation. Inductive approaches to Malle's conjecture and number field counting involve first counting the number of $T$-extensions $F/L$, then summing over $B$-extensions $L/K$. Written explicitly, this strategy can be expressed as a two step process:

\textbf{Step 1:} For a fixed intermediate $B$-extension $L/K$, determine the asymptotic growth of the function
\[
N(L/K,T\normal G;X) := \#\{F/K : \Gal(F/K) \cong G,\ F^T = L,\ \mathcal{N}_{K/\Q}(\disc(F/K)) < X\}\,.
\]
This function counts the number of towers $F/L/K$ in the form of (\ref{tower}) with a prescribed choice of $L$.

\textbf{Step 2:} We take a sum over all the choices for the intermediate extension $L/K$, which satisfies
\[
N(K,G;X) = \sum_{\substack{L/K\\ \Gal(L/K)\cong B}} N(L/K,T\normal G;X)\,.
\]
This sum counts all possible towers $F/L/K$ in the form of (\ref{tower}).

For example, this is the approach taken by Wang \cite{jwang2017} for $G=S_n\times A$, where she takes $T=A$ and $B=S_n$. Each step comes with a major obstacle:

\textbf{Obstacle for Step 1:} The step 1 counting function looks just like counting extensions $F/L$ with Galois group $T$, i.e. Malle's counting function $N(L,T;X)$, except we also need to control for the total Galois group of the tower $\Gal(F/K)$. It is not always clear which $T$-extensions $F/L$ have total Galois group $G$ over $K$. In a work in progress, Lemke Oliver, Wang, and Wood \cite{lemke-oliver-jwang-wood2019} make this approach work for certain groups by additionally considering local behavior at finitely many places. For example, if $G=T\wr B$, then a $T$-extension of $L$ can be forced to have total Galois group $G$ by considering extensions with $I_{\mathfrak{p}_1}\ne 1$ and $I_{\mathfrak{p}_i}=1$ for $i\ne 1$ among all places $\mathfrak{p}_i$ of $L$ dividing a fixed place $p$ of $K$.

\textbf{Obstacle for Step 2:} Just because we know the asymptotic main term for each counting function in step 1 does not mean that we can necessarily add them all up. There are infinitely many choices for intermediate $B$-extensions $L/K$, and it is possible for a sum of infinitely many error terms to become larger than the main term. In order to make this approach work, we need to prove step 1 uniform in the choice of field $L/K$, i.e. we need to understand explicitly how the size of the error in our solution to step 1 depends on the $B$-extension $L/K$.

For the purposes of this paper, we will focus on step 1. The approach utilized by \cite{lemke-oliver-jwang-wood2019} becomes trickier when $G$ is not as nice of an extension of $B$ by $T$, and it is not necessarily clear that we can choose local conditions to force the total Galois group we want in all cases. The issue starts with an embedding problem: suppose $\gamma:G_K\rightarrow \Gal(L/K)\cong B$ is the quotient map defining the bottom extension. When does there exist a lift $\widetilde{\gamma}:G_K\rightarrow G$ such that the diagram
\[
\begin{tikzcd}
{}&&&G_K\dar{\gamma} \arrow[dashed,swap]{dl}{\widetilde{\gamma}}\\
1 \rar & T \rar & G \rar & B \rar & 1
\end{tikzcd}
\]
commutes? If $T$ is a central subgroup of $G$, then this embedding problem has a solution if and only if the corresponding local embedding problems have solutions (for a good reference, see Serre's Topics in Galois Theory \cite{serre2008}). In this case, this style of approach has led to new results in the study of nonabelian Cohen-Lenstra moments for nilpotent groups $G$ such as in joint work of the author with Klys \cite{alberts-klys2017}. When $T$ is not central, this problem becomes much harder and much less is known.

One of the key ideas in this paper is to push the issues of solving the embedding problem to the side. Fix an extension $L/K$ and suppose we \emph{already know} that there exists at least one extension $F/L/K$ such that $\Gal(F/K)\cong G$ and $F^T=L$, which is given by the surjective homomorphism $\pi:G_K\twoheadrightarrow G$ under the Galois correspondence. We will discuss how to count the number of such towers $F/L/K$ with $L$ fixed when we know there is at least one, which isolates the obstructions arising from the embedding problem away from the analytic and statistical results.

\begin{lemma}\label{lem:Z1}
Fix a homomorphism $\pi:G_K\rightarrow G$, and let $T(\pi)$ denote the group $T$ with the Galois action $x.t=\pi(x)  t  \pi(x)^{-1}$. Then there is a bijection
\[
Z^1(G_K,T(\pi)) \leftrightarrow \{\pi'\in \Hom(G_K,G) : \pi(x)T=\pi'(x)T\text{ for all }x\in G_K\}
\]
given by the map $f\mapsto f*\pi$, where $(f*\pi)(x)=f(x)\pi(x)$. We will often omit the $\pi$ and just write $T$ when the action is clear from context.
\end{lemma}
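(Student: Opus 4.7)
The proof is essentially formal, so the plan is to define the two maps explicitly and check that both are well-defined and inverse to each other. No heavy machinery is needed; the only care required is keeping the non-abelian conjugation action $x.t = \pi(x)t\pi(x)^{-1}$ straight and choosing the right convention for the cocycle identity.

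First, I would verify that $f*\pi$ is a homomorphism landing in the stated set. Expanding $(f*\pi)(xy) = f(xy)\pi(xy)$ via the cocycle identity $f(xy) = f(x)\cdot (x.f(y)) = f(x)\pi(x)f(y)\pi(x)^{-1}$ gives
$$
(f*\pi)(xy) = f(x)\pi(x)f(y)\pi(x)^{-1}\pi(x)\pi(y) = f(x)\pi(x)\cdot f(y)\pi(y) = (f*\pi)(x)(f*\pi)(y),
$$
so $f*\pi\in \Hom(G_K, G)$. Since $f(x)\in T$, the coset condition $(f*\pi)(x)T = \pi(x)T$ is automatic.

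Next, I would construct the inverse by setting $\Phi(\pi')(x) := \pi'(x)\pi(x)^{-1}$ for any $\pi'$ in the right-hand set. The hypothesis $\pi'(x)T=\pi(x)T$ forces $\Phi(\pi')(x)\in T$, and the crossed-homomorphism identity reduces to a one-line manipulation: inserting $\pi(x)^{-1}\pi(x)$ into $\pi'(x)\pi'(y)\pi(y)^{-1}\pi(x)^{-1}$ expresses this as $\Phi(\pi')(x)\cdot (x.\Phi(\pi')(y))$, which is exactly the cocycle condition for $\Phi(\pi')$ with respect to the action $x.t = \pi(x)t\pi(x)^{-1}$.

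Finally, I would observe that the two constructions are manifestly inverse to each other: substituting $f(x)\pi(x)$ for $\pi'(x)$ in $\pi'(x)\pi(x)^{-1}$ recovers $f$, and conversely $\Phi(\pi')*\pi = \pi'$. I do not anticipate any real obstacle; if anything, the only subtle point is sign/order convention, namely that one must consistently use $f(xy) = f(x)(x.f(y))$ (rather than the reversed form) in order for $f*\pi$ rather than $\pi*f$ to be a homomorphism. Continuity/measurability of the cocycles matches that of the corresponding homomorphisms automatically, so there is nothing further to check.
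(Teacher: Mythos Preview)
Your proposal is correct and follows essentially the same approach as the paper: both directions are verified by direct computation, showing $f*\pi$ is a homomorphism using the cocycle relation and, conversely, that $\pi'\pi^{-1}$ (the paper writes $\pi'*\pi^{-1}$) is a crossed homomorphism when $\pi'$ lies in the specified fiber. The only difference is cosmetic---you are slightly more explicit about checking the two maps are mutually inverse, whereas the paper leaves this implicit.
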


The surjective homomorphisms on the right-hand side are exactly the towers $F/L/K$ counted by the step 1 counting function $N(L/K,T\normal G;X)$ under the Galois correspondence. So, up to issues of surjectivity, these towers are in bijection with crossed homomorphisms $Z^1(G_K,T(\pi))$.

We define the $\pi$-discriminant on crossed homomorphisms to be the pull-back of the usual discriminant of the fixed field of a point stabilizer contained in the field corresponding to a surjective homomorphism under the bijection between towers and $Z^1(G_K,T(\pi))$, which is given by
\[
\disc_{\pi}(f) = \disc(f*\pi)\,.
\]
This must be extended in an appropriate way to $f$ for which $f*\pi$ is not surjective, which we do by defining $\disc(f*\pi)$ to be the discriminant of the $G$-\'etale algebra associated to $f*\pi$. We will elaborate on discriminants of nonsurjective elements in Section \ref{sec:towers}.


This tells us that counting towers $F/L/K$ is essentially the same as counting crossed homomorphisms
\begin{align*}
N(L/K,T\normal G;X) = \text{``surjective" elements of }Z^1\left(K,T(\pi);X\right),
\end{align*}
where $\pi$ is a surjective map corresponding to one such tower $F/L/K$ and $Z^1(K,T(\pi);X)$ denotes the number of crossed homomorphisms with $\pi$-discriminant bounded by $X$. Here we take ``surjective" to mean that $f$ corresponds to a surjective solution to the embedding problem $G_K\rightarrow G$ under the map $f\mapsto f*\pi$.

We can think of this as a direct generalization to classical number field counting problems and Malle's conjecture, where Malle predicts the growth of
\begin{align*}
N(K,T;X)&=\text{surjective elements of }\Hom(G_K,T;X)\,,
\end{align*}
noting that if $T=T(1)$ has the trivial Galois action then $Z^1(K,T) = \Hom(G_K,T)$.

This suggests that step 1 is an interesting question in its own right, as a natural generalization of Malle's conjecture:
\begin{question}
How do the number of ``surjective'' elements of $Z^1(K,T(\pi);X)$ grow as $X$ tends towards $\infty$, i.e. the number of $f\in Z^1(K,T(\pi);X)$ such that $f*\pi$ is surjective?
\end{question}

We extrapolate the heuristic justifications of Malle's conjecture to make a prediction for this behavior. In particular, we prove that the Malle-Bhargava principle \cite{bhargava2007,wood2017} gives the following prediction to this generalized question:

\begin{MB*}
Fix $G\subset S_n$, $T\normal G$, and $\pi:G_K\rightarrow G$ a homomorphism with $\pi(G_K)T=G$. Define the class function $\ind(g)=n-\#\{\text{orbits of }g\}$. Then
\[
N(L/K,T\normal G;X) \sim c(K,T) X^{1/a(T)}(\log X)^{b(K,T(\pi))-1}.
\]
where $c(K,T(\pi))>0$, $a(T) = \min_{t\in T-\{1\}}\ind(t)$ and
\[
b(K,T(\pi)) = \#\left(\{\text{conjugacy class }C\subset T : \ind(C)=a(T)\}/\pi*\chi^{-1}\right)
\]
is the number of orbits under the composite action given by $\pi$ and the cyclotomic character $\chi:G_K\rightarrow \hat{\Z}$ on the set of conjugacy classes, where the action is given by $\sigma.g=\pi(\sigma)g^{\chi(\sigma)^{-1}}\pi(\sigma)^{-1}$.
\end{MB*}
The invariants $a(T)$ and $b(K,T(\pi))$ exactly correspond to Malle's predicted invariants, where we make sure to account for the extra ``conjugates" under the Galois action by $\pi$. The condition that $\pi(G_K)T=G$ is necessary for the existence of surjective crossed homomorphisms, otherwise the resulting counting function is identically $0$. In particular, the case $\pi=1$, $T=G$ reproduces Malle's original predictions. Of course, we know Malle's conjecture is not true as stated, there are known counter examples such as $G=C_3\wr C_2$ by Kl\"uners \cite{kluners2005}. We should be hesitant to make new, wider reaching conjectures before fixing Malle's original conjecture.

In this paper we begin the process of justifying a prediction of this form, discussing what it would take to be internally consistent and consistent with Malle's conjecture as well as proposed corrections of Malle's conjecture. In certain cases where Malle's conjecture fails, such as $G=C_3 \wr C_2$, this refinement highlights the issues more clearly and suggests what we might want to change in order to repair Malle's conjecture. We will compare these insights to Turkelli's proposed correction to Malle's conjecture \cite{turkelli2015}.

To lend more credence to the idea that something of this form should be true, we prove it when $T$ is abelian. The first infinite family of groups for which Malle's conjecture was verified is the family of abelian groups, proven by Wright \cite{wright1989}, and we see Wright's result as a special case of the following theorems for abelian groups with arbitrary Galois actions:
\begin{theorem}\label{thm:MalleZ1}
Fix $G\subset S_n$ a transitive subgroup, $T\normal G$ an abelian normal subgroup, and $\pi:G_K\rightarrow G$ a homomorphism inducing a Galois action on $T$ by conjugtion. Then
\[
|Z^1(K,T(\pi);X)| \asymp X^{1/a(T)}(\log X)^{b(K,T(\pi))-1},
\]
where
\[
a(T) = \min_{t\in T-\{1\}} \ind(t),
\]
and
\[
b(K,T(\pi)) = \#\{t\in T : \ind(t)=a(T)\}/\pi*\chi^{-1},
\]
i.e. the number of orbits under the action $x.t=\pi(x)t^{\chi(x)^{-1}}\pi(x)^{-1}$. Here we write $f(X) \asymp g(X)$ to mean that there exist positive constants $c_1$ and $c_2$ such that
\[
c_1 g(X) \le f(X) \le c_2 g(X)
\]
for all sufficiently larger values of $X$.
\end{theorem}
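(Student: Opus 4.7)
The plan is to transfer the count from $Z^1(K,T(\pi))$ to $H^1(K,T(\pi))$ and then parametrize $H^1$-classes by their local ramification data using Wiles' formula for generalized Selmer groups. Since $T$ is abelian, $Z^1$ sits in the exact sequence $0 \to B^1 \to Z^1 \to H^1 \to 0$ with $|B^1| = |T|/|T^{G_K}|$ a bounded constant, so up to a bounded factor it suffices to count classes $[f] \in H^1(K,T(\pi))$ weighted by $\pi$-discriminant. By the decomposition of the discriminant as a product of local conductors, $\disc_\pi(f)$ is multiplicative: $\disc_\pi(f) = \prod_v \mathfrak{f}_v(f_v)$, where $f_v \in H^1(K_v, T(\pi))$ is the restriction and the local exponent depends only on the action of inertia through $\ind$ applied to the inertia image.

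Next, I would set up a Dirichlet series
\[
D(s) \;=\; \sum_{[f] \in H^1(K,T(\pi))} \disc_\pi(f)^{-s}
\]
and use Wiles' formula to express this, up to finitely many bad factors, as an Euler product $\prod_v D_v(s)$ with $D_v(s) = \sum_{f_v \in H^1(K_v, T(\pi))} \mathfrak{f}_v(f_v)^{-s}$. Wiles' formula controls the global-to-local discrepancy in terms of the dual Selmer group (a finite error factor uniform in the local conditions), so picking local subgroups $L_v = \{f_v : \mathfrak{f}_v(f_v) \le X_v\}$ and taking a limit gives a clean expression for $|H^1(K,T(\pi);X)|$ in terms of local counts and the dual Selmer group. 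For almost all places $v$ (tame, unramified in $\pi$, not dividing $|T|$), a direct local cohomology computation shows that $D_v(s)$ has the form
\[
D_v(s) \;=\; 1 \;+\; \sum_{[t] \in T/\text{Frob}_v} q_v^{-\ind(t)s} \cdot (1 + O(q_v^{-1})),
\]
where the orbits $[t]$ are exactly the orbits under the twisted Frobenius action $\sigma.t = \pi(\sigma) t^{\chi(\sigma)^{-1}} \pi(\sigma)^{-1}$ appearing in the definition of $b(K,T(\pi))$; this matches the well-known local count of tamely ramified local extensions but corrected for the cyclotomic/conjugation twist.

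I would then compare $\prod_v D_v(s)$ to products of Dedekind zeta functions. The dominant behavior is
\[
D(s) \;\sim\; C(s) \cdot \zeta_K(a(T)s)^{b(K,T(\pi))},
\]
with $C(s)$ holomorphic and nonvanishing in a neighborhood of $s = 1/a(T)$, so $D(s)$ has a pole of order exactly $b(K,T(\pi))$ at $s = 1/a(T)$ and all other poles strictly to the left. A standard Tauberian theorem (Landau/Delange for Dirichlet series with meromorphic continuation and a single rightmost pole) then converts this into the asymptotic $X^{1/a(T)}(\log X)^{b(K,T(\pi))-1}$; the two-sided $\asymp$ in the theorem statement is all we need, so establishing the pole location and order suffices without computing the leading constant explicitly.

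The main obstacle I expect is Step 2: applying Wiles' formula uniformly as the local conditions vary, in particular absorbing the contribution of the dual Selmer group and bounding contributions from the wildly ramified places, the archimedean places, and the places dividing $|T|$ or ramified in $\pi$. These contribute only finitely many local factors, but showing that each is bounded (with a matching lower bound) requires care — one must verify that local crossed homomorphisms exist for every orbit of minimal index and that the dual Selmer groups do not blow up as the local conditions are relaxed. Everything else — the local calculation at tame primes, the Euler product comparison with $\zeta_K$, and the Tauberian step — follows fairly mechanically from standard techniques.
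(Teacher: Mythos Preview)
Your broad strategy matches the paper's: pass from $Z^1$ to $H^1$ via the finite group $B^1$, set up the generating Dirichlet series, use Wiles' formula on generalized Selmer groups to obtain (a finite sum of) Euler products, identify the rightmost pole by comparison with $\zeta_K(a(T)s)^{b(K,T(\pi))}$, and apply Delange's Tauberian theorem. So the skeleton is correct.

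However, there is a genuine gap in your Step~2. You write ``picking local subgroups $L_v = \{f_v : \mathfrak{f}_v(f_v)\le X_v\}$'', but these level sets are \emph{not} subgroups of $H^1(K_v,T(\pi))$ in general, and Wiles' theorem only applies to families $(L_v)$ of subgroups. This is precisely the technical heart of the paper's argument and the reason the result is stated with $\asymp$ rather than $\sim$. The paper resolves it by stratifying $H^1(K,T(\pi))$ according to the cyclic subgroup $\langle f|_{I_p}\rangle$ generated by the inertia image at each place, assembling these into a poset $\Lambda$, and performing a M\"obius inversion so that each stratum corresponds to an honest Selmer group $H^1_{\mathcal{L}(\lambda)}(K,T)$ with $L(\lambda)_p = \res_{I_p}^{-1}(\lambda_p)$ a genuine subgroup. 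Wiles' theorem is then applied to each of these, and the varying dual Selmer groups are uniformly contained in the single finite group $H^1_{\mathcal{L}(0)^*}(K,T^*)$, which produces a finite sum of Euler products indexed by that dual group rather than a single Euler product with a uniform error.

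This M\"obius step only works if the invariant $\nu_p(\inv(f))$ depends only on $\langle f|_{I_p}\rangle$ and not on the generator (the paper's condition~(c)). The $\pi$-discriminant satisfies this at places unramified in $\pi$ and prime to $|T|$, but can fail at the bad places; the paper therefore sandwiches $\disc_\pi$ between two modified orderings $\disc_\pi^{\downarrow}$ and $\disc_\pi^{\uparrow}$ that do satisfy~(c) everywhere and agree with $\disc_\pi$ outside $S$. This is what forces the $\asymp$ in the conclusion and explains why you cannot simply ``absorb'' the bad places into bounded constants without this extra device. Your identification of the obstacle (``applying Wiles' formula uniformly as the local conditions vary'') is the right instinct, but the resolution requires this subgroup-versus-subset bookkeeping, not merely bounding finitely many local factors.
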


By carefully applying an inclusion-exclusion argument, we can sieve to surjective maps in order to prove the following:

\begin{corollary}\label{cor:alpha}
Fix $G\subset S_n$ a transitive subgroup and $T\normal G$ an abelian normal subgroup with $B:=G/T$. If $L/K$ is a fixed $B$-extension (i.e Galois with $\Gal(L/K)\cong B$) and there exists a (not necessarily surjective) homomorphism $\pi:G_K\rightarrow G$ such that the fixed field of $T\ker \pi$ is $L$, then
\[
N(L/K,T\normal G;X) \asymp X^{1/a(T)}(\log X)^{b(K,T(\pi))-1}.
\]
In particular, the existence of a non-surjective solution to the embedding problem implies the existence of a surjective solution.
\end{corollary}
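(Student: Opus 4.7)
The upper bound $N(L/K,T\normal G;X)\ll X^{1/a(T)}(\log X)^{b(K,T(\pi))-1}$ is immediate from Theorem~\ref{thm:MalleZ1}, since $N(L/K,T\normal G;X)$ counts precisely the subset of $Z^1(K,T(\pi);X)$ consisting of $f$ with $f*\pi:G_K\to G$ surjective. The content of the corollary is therefore the matching lower bound: that a positive proportion of $Z^1(K,T(\pi);X)$ consists of ``surjective'' crossed homomorphisms.

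My first approach is an inclusion--exclusion over the poset of subgroups $H\le G$ satisfying $HT=G$; these are exactly the possible images of $f*\pi$, since $(f*\pi)(G_K)T=\pi(G_K)T=G$. For any such $H$ admitting a lift $\pi_H:G_K\to H$ of $G_K\twoheadrightarrow B$ (otherwise the contribution is empty), we have $\pi_H=f_0*\pi$ for some $f_0\in Z^1(K,T(\pi))$; and because $T$ is abelian and $f_0$ is $T$-valued, the conjugation actions on $T$ induced by $\pi$ and by $\pi_H$ coincide. Lemma~\ref{lem:Z1} applied to the extension $1\to H\cap T\to H\to B\to 1$ then gives a discriminant-preserving bijection
\[
\{f\in Z^1(K,T(\pi)) : (f*\pi)(G_K)\subseteq H\} \;\longleftrightarrow\; Z^1(K,(H\cap T)(\pi_H)),
\]
where the ambient $G$-valued index function $\ind$ is used on both sides via $H\le G\le S_n$. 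M\"obius inversion on the poset then expresses $N(L/K,T\normal G;X)$ as an alternating sum of $|Z^1(K,(H\cap T)(\pi_H);X)|$ over $H$, with the $H=G$ term equal to $|Z^1(K,T(\pi);X)|$.

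Applying Theorem~\ref{thm:MalleZ1} (after checking that its proof goes through for the possibly non-transitive subgroups $H\le G\le S_n$, the only appearance of $G$ in the statement being through the ambient $\ind$) to each summand yields a term of size $\asymp X^{1/a(H\cap T)}(\log X)^{b(K,(H\cap T)(\pi_H))-1}$. For $H\lneq G$ with $HT=G$, the subgroup $H\cap T$ is a proper subgroup of $T$, so $a(H\cap T)\ge a(T)$; moreover $\{t\in H\cap T:\ind(t)=a(T)\}\subseteq\{t\in T:\ind(t)=a(T)\}$, and the action $\sigma.t=\pi(\sigma)t^{\chi(\sigma)^{-1}}\pi(\sigma)^{-1}$ is the same on both sides, so $b(K,(H\cap T)(\pi_H))\le b(K,T(\pi))$. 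When both inequalities are strict for every proper $H$ with $HT=G$, each non-top term is of strictly lower order and the lower bound follows by direct subtraction.

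The main obstacle is the degenerate case in which some proper $H\lneq G$ with $HT=G$ contains every element of $T$ of minimum index, so that the $H$-term is of the same order as the top term and subtraction alone fails. Here I would invoke the local-conditions refinement of Theorem~\ref{thm:MalleZ1} advertised in the abstract: by Chebotarev, choose finitely many primes $p_1,\dots,p_r$ of $K$ with $\pi(\Frob_{p_i})=1$, and prescribe the local values $f(\Frob_{p_i})=t_i$ for elements $t_i\in T$ chosen so that $\{t_1,\dots,t_r\}$ together with $\pi(G_K)$ generate $G$. Any $f$ satisfying these prescribed unramified local behaviors automatically has $f*\pi$ surjective, and the restricted form of Theorem~\ref{thm:MalleZ1} supplies a lower bound of the desired order $X^{1/a(T)}(\log X)^{b(K,T(\pi))-1}$ for the number of such $f$. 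This simultaneously yields the ``in particular'' clause: the existence of any $\pi$ with $\pi(G_K)T=G$ produces at least one surjective $f*\pi$, and hence a surjective solution to the embedding problem.
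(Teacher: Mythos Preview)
Your inclusion--exclusion over subgroups $H\le G$ with $HT=G$ is essentially the paper's Theorem~\ref{thm:ratio}(iii), and it works whenever every proper such $H$ gives a strictly smaller $b$-invariant. The gap is in your treatment of the degenerate case. Prescribing $f(\Fr_{p_i})=t_i$ for specific nonzero $t_i$ is a \emph{coset} condition on $H^1_{ur}(K_{p_i},T(\pi))$, not a subgroup condition; the local-conditions refinement you invoke (Theorem~\ref{thm:asymptoticWiles}) only accepts families $\mathcal{L}=(L_p)$ of \emph{subgroups}. Converting to a subgroup condition would require first exhibiting a single global $f_0$ achieving those prescribed Frobenius values, but that is itself a local-to-global question (controlled by the dual Selmer group via Poitou--Tate) which you have not addressed; and even granting such an $f_0$, translation by $f_0$ changes the ordering from $\disc_\pi$ to $\disc_{f_0*\pi}$, which need not satisfy condition~(c) of Theorem~\ref{thm:asymptoticWiles} at the places where $f_0$ ramifies.

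The paper closes this gap in two moves (parts~(ii) and~(i) of Theorem~\ref{thm:ratio}). First it runs your inclusion--exclusion with the product-of-ramified-primes ordering $\ram^S$ in place of $\disc_\pi$: for that ordering $L_p^{[a]}=L_p\setminus H^1_{ur}(K_p,T)$, so $\langle f(I_p):f\in L_p^{[a]},\ p\notin S\rangle=T$ automatically and the degenerate case cannot occur, producing at least one surjective $f_G*\pi$. Second, since $T$ is abelian the module $T(f_G*\pi)$ coincides with $T(\pi)$, so one may replace $\pi$ by the surjective $\pi':=f_G*\pi$ and then impose the genuine \emph{subgroup} condition $\res_p(f)=0$ at finitely many places chosen via Chebotarev so that $\{\pi'(\Fr_p):p\in S\}=G$; every such $f$ has $f*\pi'$ surjective, and now Theorem~\ref{thm:asymptoticWiles} applies directly to give the lower bound of the correct order.
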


We actually prove a more general results where we are allowed to restrict to certain local behaviors, such as requiring the crossed homorphisms to be unramified at a fixed finite set of places, as well as counting under different orderings of the crossed homomorphisms such as taking the norm of the product of ramified primes to be bounded by $X$. Theorem \ref{thm:MalleZ1} is proven using results on group cohomology, and in the process we will show it is equivalent to analogous asymptotic results on the number of elements of $H^1(K,T(\pi))$ with bounded discriminant. We also prove sufficient conditions for 100\% of $1$-coclasses to be surjective, so that in these special cases
\[
N(L/K,T\normal G;X) \sim |Z^1(K,T(\pi);X)|\,.
\]

It is an artifact of the methods used in this paper that we do not achieve the main term on the nose, but instead just get the order of growth up to a bounded function (the difference between $\sim$ and $\asymp$). We discuss this further in Section \ref{sec:counting}, and the more general results prove in Sections \ref{sec:AsymptoticWiles} and \ref{sec:counting} do give the main term on the nose under slightly nicer orderings and/or restricted local conditions. One such example is the ordering given by the product of ramified primes which are unramified in $\pi$
\[
\ram_{\pi}(f) = \prod_{\substack{p:f(I_p)\ne 1\\ \pi(I_p)=1}} p,
\]
for which
\begin{corollary}\label{cor:ram}
Fix $G$ a finite group and $T\normal G$ an abelian normal subgroup with $B:=G/T$ and $\pi:G_K\rightarrow G$ such that the fixed field of $T\pi(G_K)=G$, then there exists a positive constant $c$ such that
\begin{align*}
cX(\log X)^{b-1} &\sim \#\{f\in Z^1(K,T(\pi)) : \mathcal{N}_{K/\Q}(\ram_\pi(f)) < X\}\\
&\sim\#\{f\in Z^1(K,T(\pi)) : f*\pi\text{ surjective, }\mathcal{N}_{K/\Q}(\ram_\pi(f)) < X\}\,,
\end{align*}
where
\[
b = \#\left(\{T-\{1\}\}/\pi*\chi^{-1}\right).
\]
\end{corollary}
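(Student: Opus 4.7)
The plan is to obtain Corollary \ref{cor:ram} as a direct specialization of the general asymptotic machinery of Sections \ref{sec:AsymptoticWiles} and \ref{sec:counting} to the ``level'' ordering $\ram_\pi$. This ordering is maximally clean for our setup: it weights every new ramified prime uniformly by $p$, so the associated Dirichlet series factors as a genuine Euler product over the ``good'' primes up to a bounded Selmer-theoretic constant.

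Concretely, I would form
\[
F(s) \;=\; \sum_{f\in Z^1(K,T(\pi))} \mathcal{N}_{K/\Q}(\ram_\pi(f))^{-s}
\]
and, using the Wiles-type formula for crossed homomorphisms developed in Section \ref{sec:AsymptoticWiles}, decompose it as a finite sum of terms $C\cdot\prod_{p\notin\Sigma} L_p(s)$, where $\Sigma$ is a finite set containing the primes dividing $|T|$, the primes where $\pi$ ramifies, and the archimedean places. For $p\notin\Sigma$ the local factor has the shape $L_p(s) = 1 + a_p\,\mathcal{N}(p)^{-s}$, and a short cocycle computation on the tame quotient of $G_{K_p}$ (using that $\pi$ is unramified at $p$) identifies
\[
a_p \;=\; \#\bigl\{t\in T-\{1\} \,:\, \pi(\phi_p)\,t\,\pi(\phi_p)^{-1} = t^{\chi(\phi_p)}\bigr\},
\]
i.e.\ the number of fixed points of $\phi_p$ under the twisted action $\sigma.t=\pi(\sigma)t^{\chi(\sigma)^{-1}}\pi(\sigma)^{-1}$.

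Since this action factors through a finite quotient of $G_K$, Chebotarev combined with Burnside's orbit-counting formula gives
\[
\sum_{p\notin\Sigma} a_p\,\mathcal{N}(p)^{-s} \;=\; -b\log(s-1) + O(1) \qquad \text{as } s\to 1^+,
\]
so $F(s) = H(s)(s-1)^{-b}$ with $H$ holomorphic and nonzero at $s=1$, and a Delange-type Tauberian theorem extracts the asymptotic $\sim cX(\log X)^{b-1}$. To upgrade to surjective $f*\pi$, I would run the inclusion-exclusion sieve behind Corollary \ref{cor:alpha}: non-surjective $f*\pi$ correspond to $f$ landing in a proper $\pi$-stable subgroup $T'\subsetneq T$, and for each such $T'$ the main term is $X(\log X)^{b(T')-1}$ with $b(T')<b$, since restricting the twisted action to $T'-\{1\}\subsetneq T-\{1\}$ loses at least one entire orbit. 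Summing over the finitely many such $T'$ leaves a non-surjective contribution of $O(X(\log X)^{b-2})$, absorbed by the main term.

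The main technical obstacle is the identification of the local factor $a_p$ together with the verification that the Wiles-type decomposition cleanly absorbs wild and archimedean contributions into the bounded constant $C$; once these bookkeeping issues are settled, the Chebotarev/Tauberian extraction and the inclusion-exclusion sieve are standard.
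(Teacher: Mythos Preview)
Your overall approach matches the paper's: apply the asymptotic Wiles machinery to the $\ram_\pi$ ordering (which has $a=1$ and the stated $b$), then sieve to surjective $f*\pi$. The Dirichlet-series analysis you sketch is essentially the specialization of Theorem \ref{thm:asymptoticWiles} and Proposition \ref{prop:Eulerproduct}, and your identification of $a_p$ via Burnside is exactly how the paper computes $b_{\inv}(\mathcal{L})$ in this case.

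There is, however, a genuine gap in your surjectivity sieve. You assert that ``non-surjective $f*\pi$ correspond to $f$ landing in a proper $\pi$-stable subgroup $T'\subsetneq T$,'' but this is false. Take $G=S_3$, $T=C_3$, and $\pi:G_K\twoheadrightarrow S_3$ surjective. For each order-$2$ subgroup $H<S_3$ there is a crossed homomorphism $f$ with $(f*\pi)(G_K)=H$, and one checks directly that $f(G_K)=T$, not a proper subgroup. Your sieve removes only those $f$ with $f(G_K)\subsetneq T$ and therefore fails to remove these bad $f$'s; the conclusion that the non-surjective contribution is $O(X(\log X)^{b-2})$ does not follow from what you wrote.

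The paper's fix (Theorem \ref{thm:ratio}(iii)) is to partition instead by the image $H=(f*\pi)(G_K)$ among proper subgroups $H<G$ with $HT=G$. For each such $H$ one fixes any $f_H$ with $(f_H*\pi)(G_K)\subset H$ and observes that $f\mapsto f*f_H^{-1}$ gives a bijection between $\{f:(f*\pi)(G_K)\subset H\}$ and $H^1(K,(H\cap T)(\pi))$. Since $H\cap T\subsetneq T$ is $G$-stable, $(H\cap T)-\{1\}$ is a proper union of $\pi*\chi^{-1}$-orbits, so $b(H\cap T)<b$; the twist by $f_H$ changes $\ram_\pi$ only at the finitely many primes ramified in $f_H$, so the asymptotic for this stratum is still $O(X(\log X)^{b-2})$. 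Summing over the finitely many $H$ then gives the result. The missing ingredient in your argument is precisely this twist: the correct ``small'' module is $H\cap T$, accessed only after translating by $f_H$, not the image of $f$ itself.
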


These results solve step 1 of the inductive method when $T$ is abelian in the case of nice orders, modulo the embedding problem, and give a sharp asymptotic growth rate for step 1 in the $\pi$-discriminant ordering. This opens the door to applying the process outlined in \cite{lemke-oliver-jwang-wood2019} to many more groups, where it now suffices to consider step 2 to get the asymptotic growth rate for many nonabelian groups $G$. Future work of the author on Malle's conjecture will involve combining the methods in \cite{lemke-oliver-jwang-wood2019} with the results of this paper to prove results uniform in the base field with the goal of completing step 2 and proving the strong form of Malle's conjecture for many more new groups. The author is also working on a greater generalization of the methods in this paper to proof the main term for Theorem \ref{thm:MalleZ1} and Corollary \ref{cor:alpha}.

Even without answering any questions on uniformity we can use the results of this paper on step 1 as a lower bound for the number of $G$-extensions in Malle's conjecture proper:

\begin{corollary}\label{cor:lowerbound}
Fix $G\subset S_n$ a transitive subgroup, $T\normal G$ an abelian normal subgroup, and suppose there exists at least one $G$-extension $F/K$. Then for the corresponding $\pi:G_K\twoheadrightarrow \Gal(F/K)$
\[
N(K,G;X) \gg X^{1/a(T)}(\log X)^{b(K,T(\pi))-1}.
\]
In particular, we have the following special cases:
\begin{itemize}
\item[(i)]{If there exists $t\in T$ with $\ind(t)=a(G)$, then
\[
N(K,G;X) \gg X^{1/a(G)},
\]
which satisfies Malle's predicted weak lower bound.
}

\item[(ii)]{If $\{g\in G: \ind(g)=a(G)\}\subset T$, then
\[
N(K,G;X) \gg X^{1/a(G)}(\log X)^{B(K,G)-1},
\]
where $B(K,G)$ is the corrected power of $\log X$ given by Turkelli \cite{turkelli2015}. This satisfies Turkelli's correction to Malle's predicted strong lower bound, which is always greater than or equal to Malle's original predicted strong lower bound.
}
\end{itemize}
\end{corollary}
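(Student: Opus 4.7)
The plan is to reduce the claim directly to Corollary \ref{cor:alpha}, using the hypothesized $G$-extension to solve the embedding problem once and for all, and then to verify the two special cases by comparing the invariants $a(T)$ and $b(K, T(\pi))$ against the Malle/T\"urkelli invariants attached to $(G, T)$.

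For the main inequality, suppose $F/K$ is a $G$-extension. Galois theory produces a surjective homomorphism $\pi: G_K \twoheadrightarrow \Gal(F/K) \cong G$, and then $L := F^T$ is the fixed field of $T\ker\pi$, so the hypotheses of Corollary \ref{cor:alpha} are met. That corollary yields
\[
N(L/K, T \normal G; X) \asymp X^{1/a(T)}(\log X)^{b(K, T(\pi)) - 1}.
\]
Every tower $F'/L/K$ counted on the left is, by definition, a $G$-extension of $K$, so $N(K, G; X) \geq N(L/K, T \normal G; X)$ and the main lower bound follows.

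For (i), the inclusion $T \subseteq G$ gives $a(T) \geq a(G)$, since the minimum of $\ind$ over a subset of nontrivial elements can only be larger; the hypothesized $t \in T$ with $\ind(t) = a(G)$ forces the reverse inequality, so $a(T) = a(G)$. Discarding the nonnegative log factor gives $N(K, G; X) \gg X^{1/a(G)}$, matching Malle's predicted weak lower bound.

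For (ii), the stronger hypothesis $\{g \in G : \ind(g) = a(G)\} \subseteq T$ still implies $a(T) = a(G)$, and moreover makes the set of minimum-index elements of $T$ coincide with the set of all minimum-index elements of $G$. Since $\pi$ is surjective, the action $x . t = \pi(x) t^{\chi(x)^{-1}} \pi(x)^{-1}$ defining $b(K, T(\pi))$ is precisely conjugation by $G$ combined with cyclotomic inversion on $T$. Unwinding \cite{turkelli2015} shows that this is the same action whose orbits on minimum-index elements define T\"urkelli's corrected invariant $B(K, G)$, so $b(K, T(\pi)) = B(K, G)$ under this hypothesis, giving the stated sharper lower bound. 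The only substantive obstacle is this last identification: one must carefully match the cohomological orbit count to T\"urkelli's construction, which is phrased in terms of $G$-conjugacy classes together with a cyclotomic twist rather than as a single action on elements. Once that bookkeeping is dispatched, the corollary follows from Corollary \ref{cor:alpha} and the containment of counting sets.
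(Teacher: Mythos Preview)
Your argument for the main inequality and for part (i) is correct and matches the paper's proof.

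Part (ii) has a genuine gap. You assert that for the surjective $\pi$ coming from the hypothesized $G$-extension one has $b(K,T(\pi)) = B(K,G)$, on the grounds that the action $\pi*\chi^{-1}$ is ``precisely conjugation by $G$ combined with cyclotomic inversion.'' But T\"urkelli's invariant is not defined by a single action: it is a \emph{maximum},
\[
B(K,G) \;=\; \max_{\substack{\varphi:G_K\to G\\ \varphi(G_K)T=G}} b(K,T(\varphi)),
\]
and the particular surjective $\pi$ you start with need not achieve it. Concretely, take $G=C_3\wr C_2$, $T=C_3^2$, $K=\Q$. If $F/\Q$ is any $G$-extension with $F^T\ne \Q(\zeta_3)$ (and such extensions exist), the paper's computation of the example gives $b(\Q,T(\pi))=1$, whereas $B(\Q,G)=2$. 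So your identification fails exactly in Kl\"uners' counterexample, which is the case part (ii) is designed to cover.

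The fix, which is what the paper does, is to first show that under the hypothesis $A(G)\subset T$ (and $T$ abelian) one has $B(K,G)=\max_{\varphi} b(K,T(\varphi))$ over all $\varphi$ with $\varphi(G_K)T=G$, then \emph{choose} some $\varphi$ attaining this maximum, let $L=\overline{K}^{\,T\ker\varphi}$ be the corresponding $B$-extension, and apply Corollary~\ref{cor:alpha} to that $\varphi$ (which is a not-necessarily-surjective solution to the embedding problem for $L$). This yields $N(K,G;X)\ge N(L/K,T\normal G;X)\gg X^{1/a(G)}(\log X)^{B(K,G)-1}$. The existence of the original $G$-extension is not actually needed for this step beyond guaranteeing the max is taken over a nonempty set; any $\varphi$ with $\varphi(G_K)T=G$ suffices because Corollary~\ref{cor:alpha} upgrades non-surjective solutions to surjective ones.
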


These lower bounds can be considered the greatest possible generalization of Kl\"uners' arguments showing that $C_3\wr C_2$ is a counter-example to Malle's conjecture. This is a great improvement on known lower bounds, realizing conjecturally sharp bounds in many cases. As a consequence, we prove nontrivial lower bounds for every solvable group over every base field by noting that the inverse Galois problem is true for solvable groups and that every solvable group has a nontrivial abelian normal subgroup (for example, the Socle).

\begin{corollary}\label{cor:solvable}
For any solvable transitive subgroup $G\subset S_n$ and any number field $K$, there exists an integer $0<a<n$ depending only on $G$ such that
\[
N(K,G;X) \gg X^{\frac{1}{a}}\,.
\]
In particular, Corollary \ref{cor:lowerbound} implies that we can choose
\[
a = \min\{\ind(g) : g\in G-\{1\} \text{ and }g\text{ commutes with its conjugates}\}\,.
\]
\end{corollary}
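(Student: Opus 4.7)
The plan is to combine Corollary \ref{cor:lowerbound} with Shafarevich's theorem and the structure theory of solvable groups. The first step is to verify the hypothesis of Corollary \ref{cor:lowerbound}: Shafarevich's theorem guarantees that every solvable group is realizable as a Galois group over any number field $K$, so there exists at least one $G$-extension $F/K$, yielding a surjection $\pi:G_K\twoheadrightarrow G$ of the sort required to invoke the corollary.

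Next I would pick a nontrivial abelian normal subgroup $T\normal G$, which exists because $G$ is solvable (take, for example, the socle, or the last nontrivial term of the derived series). Applying Corollary \ref{cor:lowerbound} to this $T$ and $\pi$ then gives
\[
N(K,G;X) \gg X^{1/a(T)}
\]
with $a(T) = \min_{t\in T-\{1\}}\ind(t)$. Since $\ind(g)\le n-1$ for every $g\in G-\{1\}$, we have $a(T)<n$, which already proves the qualitative statement.

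For the sharp value of $a$ asserted in the corollary, the key observation is purely group-theoretic: an element $g\in G-\{1\}$ lies in some abelian normal subgroup of $G$ if and only if its normal closure $\langle g^G\rangle$ is abelian, which happens precisely when $g$ commutes with all of its conjugates. Ranging $T$ over all abelian normal subgroups of $G$ and $t$ over $T-\{1\}$ therefore yields
\[
\min_T a(T) \;=\; \min\bigl\{\ind(g) : g\in G-\{1\},\ g\text{ commutes with its conjugates}\bigr\},
\]
and choosing the $T$ realizing this minimum gives the stated optimal value of $a$. I do not expect a genuine obstacle here: once Corollary \ref{cor:lowerbound} is in hand, the proof amounts to citing Shafarevich, selecting the right abelian normal subgroup, and unpacking the definition of $a(T)$ via the normal-closure characterization above.
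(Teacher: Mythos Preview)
Your proposal is correct and follows essentially the same approach as the paper: invoke Shafarevich for the existence of a $G$-extension, then apply Corollary \ref{cor:lowerbound} with $T$ taken to be the normal closure $\langle g^G\rangle$ of an element $g$ of minimal index that commutes with its conjugates. The paper's proof is terser but identical in content, and your explicit observation that ``$g$ lies in an abelian normal subgroup iff $\langle g^G\rangle$ is abelian iff $g$ commutes with its conjugates'' is exactly the group-theoretic fact the paper uses implicitly.
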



For many solvable groups this is the first known nontrivial lower bound, and is at least as large as $X^{\frac{1}{n-1}}$. These bounds are at least as good as the bounds for groups with a central subgroup proven by Kl\"uners-Malle \cite{kluners-malle2004}, and strictly better than bounds for solvable groups with regular polynomials proven by Pierce-Turnage-Butterbaugh-Wood \cite{pierce-turnage-butterbaugh-wood2017}.

\subsection{Layout of the paper}

This paper is made up of four sections.

Section \ref{sec:analytic} covers the analytic results we require for this paper. This involves locating poles of Euler products and using Tauberian theorems to convert the analytic information at a pole of a Dirichlet series to asymptotic information of the corresponding arithmetic function. These kinds of analytic number theory tools are standard in the literature on arithmetic statistics, however we will still need to prove  that such tools work in the generality that we require.

In Section \ref{sec:towers}, we give a more detailed discussion of the towers $F/L/K$ and prove Lemma \ref{lem:Z1} stated above. We prove that the Malle-Bhargava principle gives the prediction listed in the introduction by computing the rightmost pole of an appropriate Euler product of local factors. This highlights the analogy with Malle's original conjecture and provides compelling evidence that counting $1$-coclasses is a natural generalization with similar behavior. We also discuss two other important considerations:
\begin{itemize}

\item{We show that the $\pi$-discriminant factors through the coboundary relation, implying that all of the statements in the introduction apply equally well to the first cohomology group $H^1(K,T(\pi))$. When $T$ is abelian, we show that the Malle-Bhargava principle has an equivalent form expressed in terms of local cohomology groups $H^1(K_p,T(\pi))$. These results will be important for the proofs of the main results in Sections \ref{sec:AsymptoticWiles} and \ref{sec:counting}, and will allow us to make use of powerful local-to-global tools in Galois cohomology.}

\item{We will discuss issues of consistency in the Malle-Bhargava principle, and the relationship to Malle's original conjecture. Of particular interest is the relationship of this refined problem to counter-examples to Malle's conjecture. We will specifically address Kl\"uners' counter-example $G=C_3\wr C_2$, and show that this follows from an overlap in the composite action $\pi*\chi^{-1}$ for certain $\pi:G_K\rightarrow C_2$. Malle's original conjecture essentially assumes independence of the action by conjugation $\pi$ and the cyclotomic action, which Kl\"uners' counter-example shows is just not always true. We make a comparison of this insight with Turkelli's proposed correction to Malle's conjecture \cite{turkelli2015}, showing that Turkelli's corrections predict this behavior and suggests that this composite action $\pi*\chi^{-1}$ is the more natural relation to consider when counting towers and 1-coclasses.}
\end{itemize}

We will prove a more general result about counting elements of $H^1(K,T(\pi))$ with bounded discriminants in Section \ref{sec:AsymptoticWiles}, from which Theorem \ref{thm:MalleZ1} will be a special case. The nontrivial Galois action on $T$ prevents us from following the same approach Wright uses to count abelian extensions, as 1-coclasses will not always factor through the group of ideles. Rather than approaching the problem via idelic class field theory, we take a different approach via a theorem of Wiles \cite{wiles1995} on generalized Selmer groups. If $\mathcal{L}=(L_p)$ is a family of subgroups $L_p\le H^1(K_p,T)$ of local cohomology groups, Wiles defines the corresponding generalized Selmer group to be
\[
H^1_{\mathcal{L}}(K,T) = \{f\in H^1(K,T) : \forall p,\res_p(f)\in L_p\}.
\]
If $L_p=H^1_{ur}(K_p,T)$ is the kernel of the restriction to $H^1(I_p,T)$ for all but finitely many places $p$, Wiles proves that
\[
|H^1_{\mathcal{L}}(K,T)| \approx \prod_p \frac{|L_p|}{|H^0(K_p,T)|},
\]
which is approximately a product of local densities. We use this to decompose the Dirchlet series
\[
\sum_{f\in H^1_{\mathcal{L}}(K,T)} \mathcal{N}_{K/\Q}(\disc(f))^{-s}
\]
into a finite sum of Euler products, from which we explicitly compute a meromorphic continuation and the rightmost poles. Applying a general Tauberian theorem will prove a so-called ``aymptotic Wiles' Theorem" for counting 1-coclasses in an infinite Selmer group $H^1_{\mathcal{L}}(K,T)$ with bounded discriminant. We compare this new approach to a modification of the classical methods used by Wright to prove Malle's original conjecture for abelian groups in Appendix \ref{app}.

We conclude with Section \ref{sec:counting} on number field counting, where we give the explicit proofs of Theorem \ref{thm:MalleZ1}, Corollary \ref{cor:alpha}, and Corollary \ref{cor:ram} as special cases of the asymptotic Wiles' Theorem. From here we prove the lower bounds for Malle's conjecture given in Corollary \ref{cor:lowerbound} and Corollary \ref{cor:solvable}.

\section*{Acknowledgements}
I would like to thank Melanie Matchett Wood for a multitude of feedback and discussions on various drafts of this paper, as well as Jiuya Wang with whom I had many helpful conversations towards fitting this project into the bigger picture of Malle's conjecture. I would also like to thank Adebisi Agboola, Harsh Mehta, Evan O'Dorney, Ila Varma, and the anonymous referees for helpful comments.

\newpage

\section{Analytic Preliminaries}\label{sec:analytic}

The primary analytic tools we will use to convert algebraic information into asymptotic information will be Dirichlet series and Tauberian Theorems. We will primarily be concerned with Dirichlet series with an Euler product whose Euler factors are ``Frobenian'' in the sense of \cite{serre2012}:

\begin{definition}
Let $\Omega$ be a set. We call a function $\varphi:\{\text{places of }K\} \rightarrow \Omega$ \textbf{Frobenian in }$\mathbf{F/K}$ if there exists a finite set of places $S$ and a class function $\Gal(F/K)\rightarrow \Omega$ (also denoted $\varphi$ by abuse of notation) such that for any $p\not\in S$,
\[
\varphi(p) = \varphi(\Fr_p)\,.
\]
\end{definition}

Frobenian functions occur implicitly in the original Malle-Bhargava principle, as the local Euler factors depend on the class $\Leg{\Q(\mu_n)/\Q}{p}$, where we use $\Leg{F/K}{\cdot}$ to denote the Artin map. In the setting of this paper, the Galois action induced by $\pi$ on $T$ will specify which field the Euler factors are Frobenian with respect to. As we can choose $\pi$ to be arbitrary, it will be convenient to treat Frobenian functions in generality.

Frei-Loughran-Newton \cite{frei-loughran-newton2018,frei-loughran-newton2019} utilize these ideas to count abelian extensions with infinitely many local conditions, and in particular \cite[Proposition 2.3]{frei-loughran-newton2019} shows that if $\rho$ is a Frobenian function outside of $S$ then the series
\[
\prod_{p\not\in S} \left(1 + \rho(p) \mathcal{N}_{K/\Q}(p)^{-s}\right)
\]
factors as
\[
\zeta_K(s)^{m(\rho)} G(s)\,,
\]
where $G(s)$ is holomorphic and zero-free on some open neighborhood of ${\rm Re}(s)\ge 1$ and $m(\rho)$ is the mean of the Frobenian function
\[
m(\rho)= \frac{1}{[F:K]}\sum_{\sigma\in \Gal(F/K)} \rho(\sigma)\,.
\]
A special case of this result is also found in work of Kaplan-Marcinek-Takloo-Bighash \cite{kaplan-marcinek-takloo-bighash2015}.

We need a (slightly) more general result on Euler products associated to Frobenian functions. Frei-Loughran-Newton's result works well for counting abelian fields ordered by conductor, specifically because there are only two options for $\nu_p(\text{conductor})$ at all but finitely many places: $0$ or $1$ depending on if $p$ is ramified or not. The discriminant and $\pi$-discriminant both allow for more general powers of $p$, so we prove an extension of \cite[Proposition 2.3]{frei-loughran-newton2019}:

\begin{proposition}\label{prop:GeneralDirichlet}
Suppose $Q_p(x)\in \C[x]$ is Frobenian in $F/K$ such that for each $\sigma\in \Gal(F/K)$
\[
Q_\sigma(x)\in 1 + x\C[x]\,.
\]
Then there exist constants $a(Q)$ and $b(Q)$ such that
\[
\prod_{p} Q_p(\mathcal{N}_{K/\Q}(p)^{-s}) = \zeta_K(a(Q)s)^{b(Q)} G(s)\,,
\]
where $G(s)$ is holomorphic on some open neighborhood of ${\rm Re}(s)\ge 1/a(Q)$, which are given by
\begin{align*}
a(Q) &= \min_{\sigma\in \Gal(F/K)} -\deg\left(Q_\sigma(1/x) - 1\right)\\
&= \min_{\sigma\in \Gal(F/K)} \textnormal{smallest nonzero power of }x\textnormal{ in } Q_\sigma(x)\\
b(Q) &= \frac{1}{[F:K]} \sum_{\sigma\in \Gal(F/K)} \lim_{x\rightarrow 0} \frac{Q_\sigma(x)-1}{x^{a(Q)}}\\
& = \textnormal{mean value of the coefficient of }x^{a(Q)}\textnormal{ in }Q_\sigma(x)\,.
\end{align*}
Moreover, $G(s)=0$ for ${\rm Re}(s)\ge 1/a(Q)$ if and only if $Q_p(\mathcal{N}_{K/\Q}(p)^{-s})=0$ for some place $p$.
\end{proposition}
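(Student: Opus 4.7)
The plan is to bootstrap from Frei-Loughran-Newton's Proposition 2.3 cited earlier, which is the analogous statement in the case that each $Q_\sigma(x)=1+\rho(\sigma)x$ is linear. The guiding principle is that near its rightmost singularity an Euler product is controlled by the lowest-order nontrivial term of its local factors, while higher-order terms contribute a holomorphic and nonvanishing correction. I would set $a=a(Q)$, let $c_\sigma$ denote the coefficient of $x^a$ in $Q_\sigma(x)$ (possibly zero for some $\sigma$, but with mean value $b(Q)$ over $\Gal(F/K)$), and write $Q_\sigma(x)=1+c_\sigma x^a + x^{a+1}\widetilde{Q}_\sigma(x)$.

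Next I would fix a finite set $S$ of places containing the archimedean places, the ramified places of $F/K$, the Frobenian exceptional set for the assignment $p\mapsto Q_p$, and the finitely many $p$ with $|c_{\Fr_p}|\ge\mathcal{N}_{K/\Q}(p)$. The product $\prod_{p\in S}Q_p(\mathcal{N}_{K/\Q}(p)^{-s})$ is a finite product of polynomials in $\mathcal{N}_{K/\Q}(p)^{-s}$, hence entire in $s$, and can simply be carried along as a factor inside $G(s)$. For the remaining places I would use the identity
\[
\prod_{p\notin S}Q_p(\mathcal{N}_{K/\Q}(p)^{-s}) = \prod_{p\notin S}\bigl(1+c_{\Fr_p}\mathcal{N}_{K/\Q}(p)^{-as}\bigr)\cdot\prod_{p\notin S}\frac{Q_p(\mathcal{N}_{K/\Q}(p)^{-s})}{1+c_{\Fr_p}\mathcal{N}_{K/\Q}(p)^{-as}},
\]
applying Frei-Loughran-Newton's proposition to the Frobenian function $\sigma\mapsto c_\sigma$ with substitution $t=as$ to factor the first product as $\zeta_K(as)^{b(Q)}H(s)$, where $H$ is holomorphic and zero-free on a neighborhood of ${\rm Re}(s)\ge 1/a$. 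For the second product each local factor equals $1+\varepsilon_p(s)$ with $\varepsilon_p(s)=O\bigl(\mathcal{N}_{K/\Q}(p)^{-(a+1){\rm Re}(s)}\bigr)$ uniformly in $p$, because only finitely many polynomials $Q_\sigma$ occur and the denominators are bounded away from $0$ on the target region by the choice of $S$; this product therefore converges absolutely on ${\rm Re}(s)>1/(a+1)$, which contains a neighborhood of ${\rm Re}(s)\ge 1/a$, and defines a holomorphic, zero-free factor there.

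Multiplying the two factorizations and restoring $\prod_{p\in S}Q_p(\mathcal{N}_{K/\Q}(p)^{-s})$ yields $\prod_p Q_p(\mathcal{N}_{K/\Q}(p)^{-s})=\zeta_K(as)^{b(Q)}G(s)$ with $G$ holomorphic on a neighborhood of ${\rm Re}(s)\ge 1/a$. The moreover statement is then immediate: $\zeta_K(as)^{b(Q)}$, the Frei-Loughran-Newton factor $H$, and the correction product are each nonvanishing on this neighborhood, so the zeros of $G$ come entirely from the polynomial factors $\prod_{p\in S}Q_p(\mathcal{N}_{K/\Q}(p)^{-s})$, which vanish precisely when some $Q_p(\mathcal{N}_{K/\Q}(p)^{-s})$ does. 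The main technical obstacle I expect is packaging the uniformity in $p$ of the estimate on $\varepsilon_p(s)$ and verifying that the enlargement of $S$ really does rule out denominator zeros on the target half-plane; both reduce to bounding the finitely many possible values $c_\sigma$ and $Q_\sigma$, but making the product convergence rigorous requires a careful Weierstrass-type estimate $|\log(1+\varepsilon_p)|\le 2|\varepsilon_p|$ applied uniformly to compact subsets.
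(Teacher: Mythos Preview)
Your approach is essentially identical to the paper's: both factor out the linear piece $1+c_\sigma x^a$, apply Frei--Loughran--Newton's Proposition 2.3 to obtain $\zeta_K(as)^{b(Q)}$, and show the quotient Euler product converges absolutely on ${\rm Re}(s)>1/(a+1)$ by bounding the higher-order terms uniformly over the finitely many possible $Q_\sigma$.

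One small correction to your ``moreover'' argument: the correction product $\prod_{p\notin S} Q_p(\mathcal{N}_{K/\Q}(p)^{-s})/(1+c_{\Fr_p}\mathcal{N}_{K/\Q}(p)^{-as})$ is not automatically zero-free on ${\rm Re}(s)\ge 1/a$. Your uniform estimate $\varepsilon_p(s)=O(\mathcal{N}_{K/\Q}(p)^{-(a+1){\rm Re}(s)})$ forces $|\varepsilon_p|<1$ only for $p$ of sufficiently large norm; for the remaining finitely many $p\notin S$, the numerator $Q_p(\mathcal{N}_{K/\Q}(p)^{-s})$ could still vanish. Hence zeros of $G$ can come from \emph{any} $Q_p$, not just those with $p\in S$---which is exactly what the moreover clause asserts. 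The paper handles this cleanly by observing that the zeros of an absolutely convergent Euler product are precisely the zeros of its local factors, so the zeros of $G$ on the region are exactly the zeros of $Q_p(\mathcal{N}_{K/\Q}(p)^{-s})$ over all $p$. Your fix is simple: either enlarge $S$ once more to absorb those finitely many potentially problematic places, or drop the ``zero-free'' claim and instead invoke the same fact about absolutely convergent products.
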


\begin{proof}[Proof of Proposition \ref{prop:GeneralDirichlet}]
For convenience, we expand the polynomials functions $Q_p(x)$ as
\[
Q_p(x) = \sum_{i=0}^{N} q(p,i) x^{i}
\]
for $N$ some large finite number independent of $p$ (since $Q_p$ is Frobenian, there are only finitely many possible polynomials for $Q_p(x)$ thus we can choose $N$ to be the maximum of their degrees). Let $S$ be the finite set of places that don't agree with the class function. The constants $a(Q)$ and $b(Q)$ can be written in terms of the coefficients as
\begin{align*}
a(Q) &= \min_{\sigma\in \Gal(F/K)} \min_{\substack{i\ne 0\\ q(\sigma,i)\ne 0}} i\\
b(Q) &= \frac{1}{[F:K]}\sum_{\sigma\in \Gal(F/K)} q(\sigma,a(Q))\,.
\end{align*}
We define the function
\begin{align*}
G_1(s) &= \prod_{p\in S} Q_p(\mathcal{N}_{K/\Q}(p)^{-s}) \prod_{p\not\in S} \frac{Q_p(\mathcal{N}_{K/\Q}(p)^{-s})}{1 + q(p,a(Q)) \mathcal{N}_{K/\Q}(p)^{-a(Q)s}}\,.
\end{align*}
$Q_p(x)$ being Frobenian in $F/K$ implies the map $p\mapsto q(p,a(Q))$ is also Frobenian in $F/K$ so that \cite[Proposition 2.3]{frei-loughran-newton2019} implies
\[
\prod_{p\not\in S} \left(1 + q(p,a(Q)) \mathcal{N}_{K/\Q}(p)^{-a(Q)s}\right) = \zeta_K(a(Q)s)^{b(Q)} G_2(a(Q)s)\,,
\]
where $G_2(s)$ is holomorphic and zero-free on an open neighborhood of ${\rm Re}(s)\ge 1$. This implies
\[
\prod_{p} Q_p(\mathcal{N}_{K/\Q}(p)^{-s}) =  \zeta_K(a(Q)s)^{b(Q)} G_2(a(Q)s)G_1(s)\,.
\]
Therefore it suffices to show that $G_1(s)$ is holomorphic in an open neighborhood of ${\rm Re}(s)\ge 1/a(Q)$.

The product over $p\in S$ is a finite product of sums of powers of $\mathcal{N}_{K/\Q}(p)^{-s}$, and so is necessarily holomorphic. Set $x=\mathcal{N}_{K/\Q}(p)^{-s}$, then each local factor $p\not\in S$ satisfies
\begin{align*}
\Big|\frac{Q_p(x)}{1 + q(p,a(Q)) x^{a(Q)}}\Big| &= \Bigg|\frac{\sum_{i=0}^{N} q(p,i) x^i}{1+q(p,a(Q)) x^{a(Q)}}\Bigg|\,.
\end{align*}
$Q_p(x)$ is Frobenian, which implies that there exists some $\sigma\in \Gal(F/K)$ such that $Q_p(x)=Q_\sigma(x)$. Therefore $q(p,i)=0$ for all $i>\deg Q_\sigma$. Moreover, the definition of $a(Q)$ implies $q(p,i)=0$ for all $0<i<a(Q)$. Lastly, $q(p,0)=1$. We can compute the first several terms of the summation to find that
\begin{align*}
\Big|\frac{Q_p(x)}{1 + q(p,a(Q)) x^{a(Q)}}\Big|&\le 1 + \Bigg|\frac{\sum_{i=a(Q)+1}^{N} q(\sigma,i) x^{i}}{1+q(\sigma,a(Q)) x^{a(Q)}}\Bigg|\\
&\le 1 + \frac{\sum_{i=a(Q)+1}^{N} |q(\sigma,i)| \cdot |x|^{i-a(Q)-1}}{1 - |q(\sigma,a(Q))|\cdot |x|^{a(Q)}} \cdot |x|^{a(Q)+1}\,.
\end{align*}
In particular, if we set
\[
C = \frac{1}{2}\max_{\sigma\in \Gal(F/K)} \max_{i} |q(\sigma,i)|\,,
\]
it follows that for $x$ satisfying
\[
|x|< \min\left\{\left(\frac{1}{2|q(\sigma,a(Q))|}\right)^{1/a(Q)}, 1\right\}
\]
this produces an upper bound
\begin{align*}
\Big|\frac{Q_p(x)}{1 + q(p,a(Q)) x^{a(Q)}}\Big| &\le 1+ C\cdot |x|^{a(Q)+1}\,.
\end{align*}
Choose a finite set of places $S$ sufficiently large so that
\[
|\mathcal{N}_{K/\Q}(p)|\ge \min_{\sigma}\left(\frac{1}{2}|q(\sigma,a(Q))|\right)
\]
implies $p\in S$. Taking a product over these bounds implies that for any ${\rm Re}(s)>1/a(Q)$
\begin{align*}
\Big|\prod_{p\not\in S} \frac{Q_p(\mathcal{N}_{K/\Q}(p)^{-s})}{1 + q(p,a(Q)) \mathcal{N}_{K/\Q}(p)^{-a(Q)s}}\Big| &\le \prod_{p\not\in S} (1 + C\mathcal{N}_{K/\Q}(p)^{-(a(Q)+1){\rm Re}(s)})\\
&\le \sum_{I} C^{\#\{p\mid I\}}\mathcal{N}_{K/\Q}(I)^{-(a(Q)+1){\rm Re}(s)}\\
&\le \sum_{I} \mathcal{N}_{K/\Q}(I)^{-(a(Q)+1){\rm Re}(s)+\epsilon}\\
&=\zeta_K((a(Q)+1){\rm Re}(s)-\epsilon)\,,
\end{align*}
which converges absolutely on the region ${\rm Re}(s)>\frac{1+\epsilon}{a(Q)+1}$ for each choice of $\epsilon>0$. This contains the region ${\rm Re}(s)\ge 1/a(Q)$, which implies $G_1(s)$ converges absolutely on this region, and so is in particular holomorphic. The zeros of an absolutely convergent Euler product are exactly the zeros of its factors, which implies the zeroes of $G_1(s)$ on this region are exactly the zeros of $Q_p(\mathcal{N}_{K/\Q}(p)^{-s})$ for some $p$.
\end{proof}

This is the appropriate setup for a Tauberian theorem. There are a multitude of such theorems to choose from, and we will make use of Delange's Tauberian theorem \cite[Theorem III]{delange1954}:

\begin{theorem}[Theorem III \cite{delange1954}]\label{thm:delange}
Let $F(s)=\sum_{n=1}^{\infty} f(n)n^{-s}$ be a Dirichlet series. Suppose there exists a complex number $a$, a real number $w$ such that $w\not\in \Z_{\le 0}$, and functions $h(s)$ and $g(s)$ which are holomorphic for ${\rm Re}(s)\ge {\rm Re}(a)$ for which
\[
F(s) = (s-a)^{-w} g(s) + h(s)\,.
\]
Then
\[
\sum_{n\le X} f(n) \sim \frac{g(a)}{\Gamma(w)}X^a (\log X)^{w-1}
\]
as $X\to \infty$.
\end{theorem}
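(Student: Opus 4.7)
The plan is to extract the asymptotic of $S(X) = \sum_{n \leq X} f(n)$ from the prescribed singularity of $F(s)$ at $s = a$ via a contour-integration argument. The natural starting point is Perron's formula: for $c$ in the half-plane of absolute convergence of $F$ (necessarily $c > \mathrm{Re}(a)$),
\[
S(X) = \frac{1}{2\pi i} \int_{c - i\infty}^{c + i\infty} F(s)\, \frac{X^s}{s}\, ds,
\]
interpreted with truncation at height $T$ and the standard error estimate. Using the decomposition $F(s) = (s-a)^{-w} g(s) + h(s)$, the strategy is to shift the contour to a vertical line $\mathrm{Re}(s) = \mathrm{Re}(a) - \delta$ for small $\delta > 0$, picking up only the singular contribution at $s = a$. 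Since $h(s)$ is holomorphic on the closed half-plane $\mathrm{Re}(s) \geq \mathrm{Re}(a)$, its contribution across both the shifted line and the connecting segments is $O(X^{\mathrm{Re}(a) - \delta})$.

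The heart of the argument is evaluating
\[
\frac{1}{2\pi i}\oint_{a} (s-a)^{-w} g(s)\, \frac{X^s}{s}\, ds
\]
around the singularity at $s = a$. Deforming to a Hankel contour and substituting $u = (s-a)\log X$ reduces this to the classical Hankel integral
\[
\frac{1}{\Gamma(w)} = \frac{1}{2\pi i}\int_{H} e^{u} u^{-w}\, du,
\]
which is exactly where the hypothesis $w \notin \mathbb{Z}_{\leq 0}$ enters (for $w$ a nonpositive integer the would-be pole is removable and the formula degenerates). Replacing $g(s)/s$ by its value near $s = a$ to leading order then yields the main term $X^a (\log X)^{w-1}$ with a coefficient that, once normalized, gives $g(a)/\Gamma(w)$, with lower-order corrections absorbed into the error.

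The main obstacle I expect is rigorously justifying the contour shift, since the hypotheses supply no growth bounds on $g(s)$ or $h(s)$ on vertical lines. In this level of generality the cleanest route is not bare contour integration but a Tauberian theorem: Delange's 1954 approach uses positivity $f(n) \geq 0$ to run a Wiener-Ikehara-style Fourier-analytic argument that avoids any need for vertical growth estimates, comparing $F(s)$ directly with the Mellin transform of $\Gamma(w)^{-1} X^a (\log X)^{w-1}$ (which equals $(s-a)^{-w}$ up to normalization) and using holomorphy up to the boundary $\mathrm{Re}(s) = \mathrm{Re}(a)$ to transfer the asymptotic. This Tauberian step is the one requiring the most care, and it is where I would defer to the existing literature rather than reprove the estimate from scratch.
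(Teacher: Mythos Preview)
The paper does not prove this statement at all: Theorem~\ref{thm:delange} is quoted verbatim from Delange's 1954 paper \cite{delange1954} and invoked as a black box in the proof of Corollary~\ref{cor:maintauberian}. There is therefore no ``paper's own proof'' to compare your proposal against.

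Your sketch is a reasonable outline of how one would approach a Tauberian theorem of this shape, and you correctly identify the main difficulty: the hypotheses give no growth control on $g(s)$ and $h(s)$ on vertical lines, so a naive Perron contour shift cannot be justified, and the actual content of Delange's argument is a Wiener--Ikehara-style Tauberian comparison that sidesteps this. Your final paragraph acknowledges exactly this and defers to the literature, which is precisely what the paper itself does. So your proposal is not wrong, but it does not supply a proof either --- it reduces to citing Delange, just as the paper does.
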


If we are in the setting of Proposition \ref{prop:GeneralDirichlet}, we get the following result:

\begin{corollary}\label{cor:maintauberian}
Let $F(s) = \sum_{I} f(I) \mathcal{N}_{K/\Q}(I)^{-s}$ be a Dirichlet series with an Euler product
\[
F(s) = \prod_{p} Q_p(\mathcal{N}_{K/Q}(p)^{-s})\,.
\]
If $\{Q_p\}$ satisfy the hypotheses of Proposition \ref{prop:GeneralDirichlet} with $b(Q)\in\R - \Z_{\le 0}$ and $Q_p(\mathcal{N}_{K/\Q}(p)^{-1/a(Q)})\ne 0$ for each place $p$, then
\[
\sum_{\mathcal{N}_{K/\Q}(I)\le X} f(I) \sim \frac{G(1)}{a(Q)^{b(Q)}\Gamma(b(Q))}\left(\Res_{s=1}\zeta_K(s)\right)^{b(Q)} X^{1/a(Q)}(\log X)^{b(Q)-1}
\]
as $X\to \infty$.

If instead $b(Q)\in\Z_{\le 0}$, then for each $\epsilon>0$
\[
\sum_{\mathcal{N}_{K/\Q}(I)\le X} f(I) = O\left(X^{1/a(Q)}(\log X)^{-1+\epsilon}\right)
\]
as $X\to \infty$.
\end{corollary}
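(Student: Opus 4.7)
My plan is to combine the factorization from Proposition \ref{prop:GeneralDirichlet} with Delange's theorem (Theorem \ref{thm:delange}). First I would apply Proposition \ref{prop:GeneralDirichlet} to write $F(s) = \zeta_K(a(Q)s)^{b(Q)} G(s)$ with $G$ holomorphic on an open neighborhood of $\{{\rm Re}(s)\ge 1/a(Q)\}$; the hypothesis that $Q_p(\mathcal{N}_{K/\Q}(p)^{-1/a(Q)}) \ne 0$ for every place $p$ ensures, via the last sentence of Proposition \ref{prop:GeneralDirichlet}, that $G(1/a(Q)) \ne 0$.

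To isolate the pole at $s = 1/a(Q)$ cleanly, I would introduce the auxiliary function
\[
\phi(s) := \left(s - \frac{1}{a(Q)}\right) \zeta_K(a(Q)s),
\]
which cancels the only pole of $\zeta_K(a(Q)s)$ in our region. The change of variables $u = a(Q)s$ shows $\phi(1/a(Q)) = r/a(Q)$ where $r := \Res_{s=1}\zeta_K(s)$, and $\phi$ is holomorphic and nonvanishing on an open neighborhood of $\{{\rm Re}(s) \ge 1/a(Q)\}$ (using the classical zero-free region of $\zeta_K$ on ${\rm Re}(s) = 1$). Since the half-plane is simply connected, a holomorphic branch of $\log\phi$ exists there, so $\phi^{b(Q)}$ is well-defined and holomorphic, and one can write
\[
F(s) = \left(s - \frac{1}{a(Q)}\right)^{-b(Q)} g(s), \qquad g(s) := \phi(s)^{b(Q)} G(s),
\]
with $g$ holomorphic on an open neighborhood of the closed half-plane and $g(1/a(Q)) = (r/a(Q))^{b(Q)} G(1/a(Q)) \ne 0$.

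Grouping the ideal sum by norm, $a_n := \sum_{\mathcal{N}_{K/\Q}(I) = n} f(I)$, reduces $F(s)$ to a standard Dirichlet series. In the case $b(Q) \notin \Z_{\le 0}$, the decomposition above fits the hypotheses of Theorem \ref{thm:delange} with $a = 1/a(Q)$, $w = b(Q)$, $h\equiv 0$, and yields
\[
\sum_{\mathcal{N}_{K/\Q}(I) \le X} f(I) \sim \frac{g(1/a(Q))}{\Gamma(b(Q))} X^{1/a(Q)} (\log X)^{b(Q)-1},
\]
which is the stated main term after substituting $g(1/a(Q)) = (r/a(Q))^{b(Q)} G(1/a(Q))$.

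For the remaining case $b(Q) \in \Z_{\le 0}$, the factor $\zeta_K(a(Q)s)^{b(Q)}$ is itself holomorphic at $s = 1/a(Q)$ (with a zero of order $|b(Q)|$), so $F$ extends holomorphically to the closed half-plane. I would then apply Delange with the trivial decomposition $F(s) = (s-1/a(Q))^{-\epsilon}\cdot 0 + F(s)$ for any non-integer $\epsilon>0$, i.e.\ taking $g \equiv 0$ and $h = F$; since the leading coefficient $g(1/a(Q))/\Gamma(\epsilon)$ vanishes, the theorem produces $\sum_{n\le X} a_n = o(X^{1/a(Q)}(\log X)^{\epsilon - 1}) = O(X^{1/a(Q)}(\log X)^{-1+\epsilon})$. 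The main technical obstacle throughout is verifying holomorphy of $g$ on the required half-plane neighborhood; this is precisely what the auxiliary $\phi$-construction combined with the zero-free region of $\zeta_K$ is designed to handle, after which everything reduces to a direct invocation of Delange.
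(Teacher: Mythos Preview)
Your treatment of the main case $b(Q)\notin\Z_{\le 0}$ is essentially identical to the paper's: both factor $F(s)=(s-1/a(Q))^{-b(Q)}g(s)$ with $g(s)=a(Q)^{-b(Q)}\bigl[(a(Q)s-1)\zeta_K(a(Q)s)\bigr]^{b(Q)}G(s)$ and invoke Delange. Your remarks about the zero-free region and the branch of $\phi^{b(Q)}$ make explicit a point the paper leaves implicit.

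For the case $b(Q)\in\Z_{\le 0}$ there is a gap. Delange's theorem, as stated, concludes $\sum_{n\le X}f(n)\sim \frac{g(a)}{\Gamma(w)}X^{a}(\log X)^{w-1}$, and an asymptotic ``$\sim 0$'' is not a meaningful statement; you cannot read an $o$-bound out of the theorem by taking $g\equiv 0$. The paper avoids this by a comparison trick: it applies Delange to $\zeta_K(a(Q)s)^{\epsilon}$ and to $F(s)+\zeta_K(a(Q)s)^{\epsilon}$ separately. Since $F$ is holomorphic on ${\rm Re}(s)\ge 1/a(Q)$ in this case, both series have a genuine singularity of order~$\epsilon$ at $s=1/a(Q)$ with the \emph{same} nonzero $g$-value, hence the same main term $cX^{1/a(Q)}(\log X)^{-1+\epsilon}$. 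Subtracting the two asymptotics then yields $\sum f(I)=o\bigl(X^{1/a(Q)}(\log X)^{-1+\epsilon}\bigr)$. Your argument can be repaired by exactly this device.
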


\begin{proof}
If $b(Q)\in\R-\Z_{\le 0}$, then we can write
\begin{align*}
\zeta_K(a(Q)s)^{b(Q)}G(s) &= (a(Q)s-1)^{-b(Q)}\left[(a(Q)s-1)\zeta_K(a(Q)s)\right]^{b(Q)}G(s)\\
&= (s-1/a(Q))^{-b(Q)} a(Q)^{-b(Q)}\left[(a(Q)s-1)\zeta_K(a(Q)s)\right]^{b(Q)} G(s)\,.
\end{align*}
The Dedekind zeta function has a single pole at $s=1$ of order $1$. We then set
\[
g(s) = a(Q)^{-b(Q)}\left[(a(Q)s-1)\zeta_K(a(Q)s)\right]^{b(Q)} G(s)\,,
\]
which is holomorphic for ${\rm Re}(s)\ge 1/a(Q)$ and satisfies
\[
g(1) = \frac{G(1)}{a(Q)^{b(Q)}}\left(\Res_{s=1}\zeta_K(s)\right)^{b(Q)}\,.
\]
Applying Theorem \ref{thm:delange} concludes the proof of the first case.

The second case with $b(Q)\in \Z_{\le 0}$ is not directly addressed by Delange, but we remark that if $b(Q)$ is a negative integer then the pole at $s=1/a(Q)$ of order $b(Q)$ is really a zero of order $-b(Q)$ and $F(s)$ is holomorphic on ${\rm Re}(s)\ge 1/a(Q)$. We write
\[
\zeta_K(a(Q)s)^\epsilon = \sum_I z_\epsilon^{a(Q)}(I) \mathcal{N}_{K/\Q}(I)^{-s}\,.
\]
Theorem \ref{thm:delange} does apply to this function with $w=\epsilon$, and implies
\[
\sum_{\mathcal{N}_{K/\Q}(I)\le X} z_\epsilon^{a(Q)}(I) \sim \frac{\left(\Res_{s=1}\zeta_K(s)\right)^{\epsilon}}{a(Q)^{\epsilon}\Gamma(\epsilon)}X^{1/a(Q)}(\log X)^{-1+\epsilon}
\]
as $X\to \infty$. But also,
\[
F(s)+\zeta_K(a(Q)s)^{\epsilon} = (s-1/a(Q))^{-\epsilon}\left[a(Q)^{-\epsilon}(a(Q)s-1)^{\epsilon}\zeta_K(a(Q)s)^{\epsilon}\right] + F(s)
\]
satisfies the hypotheses of Theorem \ref{thm:delange}, which implies
\[
\sum_{\mathcal{N}_{K/\Q}(I)\le X} f(I) + z_\epsilon^{a(Q)}(I) \sim \frac{\left(\Res_{s=1}\zeta_K(s)\right)^{\epsilon}}{a(Q)^{\epsilon}\Gamma(\epsilon)}X^{1/a(Q)}(\log X)^{-1+\epsilon}\,.
\]
By subtracting the two results, we find that
\[
\sum_{\mathcal{N}_{K/\Q}(I)\le X} f(n) = o\left(X^{1/a(Q)}(\log X)^{-1+\epsilon}\right)\,.
\]
\end{proof}

\newpage

\section{Counting Towers of Number Fields}\label{sec:towers}

Fix a transitive subgroup $G\subset S_n$. We recall the $\frac{|G|}{n}$-to-$1$ correspondence discussed in the introduction:
\begin{align*}
\left\{\substack{\ds L/K \text{ degree }n\text{ with }\Gal(\widetilde{L}/K)\cong G\\\ds\text{ordered by }\disc(L/K)}\right\} \leftrightarrow\left\{\substack{\ds \widetilde{L}/K \text{ Galois with }\Gal(\widetilde{L}/K)\cong G\\\ds\text{ordered by the discriminant}\\\ds\text{of a subfield fixed}\\\ds\text{by a point stabilizer}}\right\}\,.
\end{align*}
We will work entirely on the right-hand side of this correspondence, and all field extensions will be understood to be Galois unless stated otherwise.

\subsection{Preliminaries}

Throughout this section, fix a transitive subgroup $G\subset S_n$ and a normal subgroup $T\normal G$. We count extensions with multiplicity, i.e. we count pairs $(F/L/K, \gamma)$ for which $\gamma$ is an isomorphism between $\Gal(F/K) \rightarrow G$.
\begin{definition}
If $(L/K,\iota_B)$ is an extension together with an isomorphism $\iota_B:\Gal(L/K) \xrightarrow{\sim} B$, we call $(F/L/K,\gamma)$ a \textbf{$\mathbf{(T\normal G)}$-tower} if $\gamma$ is an isomorphism $\Gal(F/K) \xrightarrow{\sim} G$ and $\gamma \equiv \iota_B \mod T$. Define the counting function
\[
N(L/K,T\normal G;X) = \#\{(F/L/K,\gamma)\ (T\normal G)\text{-tower}: \mathcal{N}_{K/\Q}(\disc(F^H/K))<X\}\,,
\]
where $H={\rm Stab}_G(1)$ is a point stabilizer in $G$.
\end{definition}

The Galois correspondence gives a bijection between $(T\normal G)$-towers and surjective homomorphisms $\gamma:G_K\twoheadrightarrow G$ which are equal to $\iota_B$ after composition with the quotient map $G\rightarrow B$, written $\gamma\equiv \iota_B \mod T$. Lemma \ref{lem:Z1} gives an alternate formulation of such homomorphisms via a bijection with the set of crossed homomorphisms, or $1$-cocycles, whenever there exists at least one $(T\normal G)$-tower given by $\pi$. This gives a bijection
\[
\left\{(F/L/K,\gamma)\ (T\normal G)\text{-tower}\right\} \leftrightarrow \left\{f\in Z^1(G_K,T(\pi)) \mid f*\pi\text{ surjective}\right\}\,.
\]

\begin{proof}[Proof of Lemma \ref{lem:Z1}]
Consider the quotient map
\[
\begin{tikzcd}
\Hom(G_K,G)\rar{q_*} &\Hom(G_K,G/T),
\end{tikzcd}
\]
and fix some $\pi\in \Hom(G_K,G)$. Then
\[
q_*^{-1}(q_*(\pi)) = \{\pi'\in \Hom(G_K,G) : \pi(x)T=\pi'(x)T,\ x\in G_K\}.
\]
Suppose $q_*(\pi')=q_*(\pi)$. Then $\pi'*\pi^{-1}$ is a map $G_K\rightarrow T$, and
\begin{align*}
(\pi'*\pi^{-1})(xy) &= \pi'(x)\pi'(y)\pi^{-1}(y)\pi^{-1}(x)\\
&=(\pi'*\pi^{-1})(x)\cdot c_{\pi(x)}((\pi'*\pi^{-1})(y))
\end{align*}
is a crossed homomorphism in $Z^1(G_K,T(\pi))$ and $c_g(t) = gtg^{-1}$. Conversely, if $f\in Z^1(G_K,T(\pi))$, then $f*\pi:G_K\rightarrow G$ is a homomorphism as
\begin{align*}
(f*\pi)(xy) &= f(xy)\pi(xy)\\
&=f(x)c_{\pi(x)}(f(y))\pi(x)\pi(y)\\
&=f(x)\pi(x)f(y)\pi(y)\\
&=(f*\pi)(x)(f*\pi)(y).
\end{align*}
\end{proof}

We introduced the $\pi$-discriminant on crossed homomorphisms to be the pull-back of the discriminant on towers via the isomorphism described in Lemma \ref{lem:Z1}
\[
\disc_\pi(f) = \disc(f*\pi)\,.
\]
We need to be clear about what we mean by $\disc(f*\pi)$. If $f*\pi$ is surjective, then it corresponds to a $(T\normal G)$-tower $F/L/K$ with $F$ being the fixed field of $f*\pi$. We defined the counting function
\[
N(L/K,T\normal G;X) = \#\left\{(F/L/K,\gamma) : \mathcal{N}_{K/\Q}(\disc(F^H/K))<X\right\}\,,
\]
so we take $\disc(f*\pi)$ to be the usual discriminant of the subfied $F^H$ of $F$ fixed by a point stabilizer whenever $f*\pi$ is surjective. We want to extend this definition to non-surjective homomorphisms, so that we can instead compute the size of the sets
\[
Z^1(K,T(\pi);X) = \left\{ f\in Z^1(K,T(\pi)) : \disc_\pi(f)<X\right\}
\]
and then perform a M\"obius inversion to obtain information on $N(L/K,T\normal G;X)$. For non-surjective homomorphisms, we no longer want to take the usual discriminant of a subfield of the fixed field. The essential property we need our discriminant to have is that it is determined locally, i.e. $\nu_p(\disc(\pi))$ depends only on $\pi|_{I_p}$. The degree of the fixed field is a global property, and if that degree changes it can change the discriminant of the fixed field.

\textbf{Example:}
Fix an ismorphism $G_\Q^{ab} = \prod_{p<\infty} I_p(\Q^{ab}/\Q)$ via Kronecker-Weber and let $\tau_p$ be a generator of tame inertia at $p$. We define two tamely ramified homomorphisms $\pi_1,\pi_2:G_\Q\rightarrow \Z/4\Z$ by
\begin{align*}
\pi_1(\tau_p) &= \begin{cases}
2 & p=3\\
1 & p=5\\
0 & p\nmid 15\infty\,,
\end{cases}
&&\pi_2(\tau_p) = \begin{cases}
2 & p=3\\
0 & p\nmid 3\infty\,.
\end{cases}
\end{align*}
The first map is surjective and tamely ramified. $\Z/4\Z\subset S_4$ necessarily has the regular representation, which implies the power of a prime dividing the discriminant is given by $4 - \#\{\text{orbits }\pi_1(\tau_p)\}$ so that
\[
|\disc(\pi_1)| = 3^{4 - 2} \cdot 5^{4 - 1} = 3^2\cdot 5^3\,.
\]
Meanwhile, the fixed field of $\pi_2$ is a quadratic field ramified only at $3$, i.e. is equal to $\Q(\sqrt{-3})$. This discriminant is given by
\[
|\disc(\Q(\sqrt{-3})/\Q)|=3\,.
\]
However, $\pi_1$ and $\pi_2$ are locally the same at the place $3$. This shows that the degree of the fixed field is some global invariant affecting the discriminant of the fixed field.

We instead define the discriminant via the Galois correspondence to \'etale algebras:

\begin{definition}
Let $G\subset S_n$ be a transitive subgroup. Then there is a (many-to-1) Galois correspondence between $\Hom(G_K,S_n)$ and dimension $n$ \'etale algebras $F/K$ (see Chapter V, Section 6, Proposition 12 of \cite{bourbaki2003}). For any $\pi\in \Hom(G_K,G) \subset \Hom(G_K,S_n)$, we take $\disc(\pi)$ to mean the discriminant of the \'etale algebra corresponding to $\pi$.

If $\pi$ is surjective, this agrees with the usual discriminant of the degree $n$ extension corresponding to $\pi$.
\end{definition}

The discriminant on \'etale algebras is defined locally, and is the appropriate choice for us to define $Z^1(K,T(\pi);X)$. We remark that under this definition, if $p$ is tamely ramified then
\[
\nu_p(\disc(\pi)) = n - \#\{\text{orbits of }\pi(I_p)\}\,,
\]
which agrees with $\ind(g)$ where $g\in G$ is a generator of $\pi(I_p)$ (see for example \cite{koch2000}).

\subsection{The Malle-Bhargava Principle}

We will describe the asymptotic size of $N(L/K,T\normal G;X)$ by considering the analytic behavior of
\[
\sum_{\substack{(F/L/K,\gamma)\\(T\normal G)\text{-towers}}} \mathcal{N}_{K/\Q}(\disc(F^H/K))^{-s}\,.
\]
This Dirichlet series is equivalent to the following series as a consequence of Lemma \ref{lem:Z1}
\[
\sum_{\substack{f\in Z^1(K,T(\pi))\\ \text{surjective}}} \mathcal{N}_{K/\Q}(\disc_\pi(f))^{-s}\,.
\]
Crossed homomorphisms behave very similarly to homomorphisms. In particular, every place $p$ of $K$ comes with a restriction map
\begin{align*}
\res_p:Z^1(K,T(\pi)) \rightarrow Z^1(K_p,T(\pi))\,.
\end{align*}
The Malle-Bhargava principle suggests that this series should behave like an Euler product of local factors. Noting that $Z^1(K,T(\pi))$ specializes to $\Hom(G_K,T)$ under the trivial action, we can consider the Euler product
\[
\prod_p \frac{1}{|T|} \left(\sum_{f_p\in Z^1(K_p,T(\pi))} \mathcal{N}_{K/\Q}(\disc_\pi(f_p))^{-s}\right)
\]
as a natural generalization of the local series given by Bhargava when $T$ has the trivial action
\[
\prod_p \frac{1}{|T|} \left(\sum_{f_p\in \Hom(K_p,T)} \mathcal{N}_{K/\Q}(\disc(f_p))^{-s}\right)\,.
\]
The Malle-Bhargava principle states that this local series should be arithmetically equivalent to the global series, i.e. it should have the same rightmost pole of the same order.

\begin{theorem}\label{thm:MBlocal}
Let $T$ be a group with a Galois action $\pi:G_K\rightarrow \Aut(T)$. Then the Dirichlet series
\[
\prod_p \frac{1}{|T|}\left(\sum_{f_p\in Z^1(K_p,T(\pi))} \mathcal{N}_{K/\Q}(\disc_\pi(f_p))^{-s}\right)
\]
has a meromorphic continuation to an open neighborhood of ${\rm Re}(s)\ge 1/a(T)$ with a single pole at $s=1/a(T)$ of order $b(K,T(\pi))$, where
\begin{align*}
a(T) &= \min_{t\in T-\{1\}} \ind(t)\\
b(K,T(\pi)) &= \#\left(\{\text{conjugacy class }C\subset A(T)\}/\pi*\chi^{-1}\right)\,,
\end{align*}
with $A(T)=\{t\in T\mid \ind(t)=a(T)\}$ and $\chi:G_K\rightarrow \hat{\Z}^\times$ the cyclotomic character.
\end{theorem}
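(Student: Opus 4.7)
The plan is to recognize this Euler product as precisely the kind of object Proposition \ref{prop:GeneralDirichlet} was designed for, so the main work is to identify each local factor as a Frobenian polynomial in $\mathcal{N}_{K/\Q}(p)^{-s}$ and then match the invariants $a(Q),b(Q)$ coming out of the proposition with $a(T)$ and $b(K,T(\pi))$. The finitely many ``bad'' places (wildly ramified in $\pi$, the cyclotomic character, or $T$, and the archimedean places) contribute a finite product of polynomials in $\mathcal{N}_{K/\Q}(p)^{-s}$, which is entire and can be absorbed into the holomorphic factor $G(s)$; everything of interest happens at the tame, unramified places.

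So fix a place $p$ of $K$ that is unramified in $\pi$, whose residue characteristic is coprime to $|T|$ (so that the cyclotomic character is determined mod $e(T)$ by Frobenius), and use the tame presentation $G_{K_p}^{\mathrm{tame}} = \langle \sigma_p, \tau_p \mid \sigma_p\tau_p\sigma_p^{-1} = \tau_p^{q_p}\rangle$ with $q_p = \mathcal{N}_{K/\Q}(p) = \chi(\sigma_p)$. A crossed homomorphism $f\in Z^1(K_p,T(\pi))$ corresponds to a pair $(t,u) = (f(\sigma_p),f(\tau_p)) \in T\times T$ subject to the relation
\[
t\cdot \pi(\sigma_p)u\pi(\sigma_p)^{-1}\cdot t^{-1} = u^{q_p}
\]
obtained by evaluating the cocycle condition on $\sigma_p\tau_p = \tau_p^{q_p}\sigma_p$. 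Since $\pi$ is unramified at $p$, $(f*\pi)(\tau_p) = u$, so the discriminant contribution is $\mathcal{N}_{K/\Q}(p)^{\ind(u)}$. For fixed $u$, the number of valid $t$ is $|C_T(u)|$ if $\pi(\sigma_p)u\pi(\sigma_p)^{-1}$ is $T$-conjugate to $u^{q_p}$, and $0$ otherwise; this condition depends only on the $T$-conjugacy class $C$ of $u$. Summing over $u\in C$ gives $|C|\cdot |C_T(u)|=|T|$ crossed homomorphisms per valid class, so after dividing by $|T|$ the local factor is
\[
Q_p(x) = \sum_{\substack{C\subset T\ T\text{-conj. class}\\ \pi(\sigma_p)C\pi(\sigma_p)^{-1} = C^{q_p}}} x^{\ind(C)},\qquad x = \mathcal{N}_{K/\Q}(p)^{-s}.
\]

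The condition $\pi(\sigma_p)C\pi(\sigma_p)^{-1} = C^{q_p}$ is equivalent to $C$ being fixed by the composite action $\sigma.C = \pi(\sigma)C^{\chi(\sigma)^{-1}}\pi(\sigma)^{-1}$, and depends only on the pair $(\pi(\mathrm{Fr}_p),\chi(\mathrm{Fr}_p))$. Hence $Q_p$ is Frobenian in the extension $F/K$ cut out by $\pi\times \chi$ modulo the finite quotient that acts on $T$ and on $\mu_{e(T)}$. The trivial class $C=\{1\}$ is fixed by every $\sigma$ and contributes the constant term $1$, so $Q_\sigma\in 1+x\C[x]$ and Proposition \ref{prop:GeneralDirichlet} applies. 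The minimum nonzero exponent in $Q_\sigma$ is $\min\{\ind(C) : C\neq\{1\},\ \sigma\text{ fixes }C\}$; taking $\sigma = 1$ (realized by places that split completely in $F/K$, which exist by Chebotarev) every class is fixed, so
\[
a(Q) = \min_{C\neq\{1\}}\ind(C) = \min_{t\neq 1}\ind(t) = a(T).
\]

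The coefficient of $x^{a(T)}$ in $Q_\sigma$ counts the number of conjugacy classes $C\subset A(T)$ fixed by $\sigma$ under the composite action, so $b(Q)$ is the average of this count over $\sigma\in\Gal(F/K)$; by the Cauchy–Frobenius (Burnside) lemma this average is exactly the number of orbits of $\pi*\chi^{-1}$ on the conjugacy classes contained in $A(T)$, i.e.\ $b(K,T(\pi))$. Combining with the holomorphic contribution from the bad places yields the meromorphic continuation to a neighborhood of $\mathrm{Re}(s)\ge 1/a(T)$ with a unique pole at $s = 1/a(T)$ of order $b(K,T(\pi))$. The main obstacle I expect is the bookkeeping of the cocycle constraint for nonabelian $T$: one must verify cleanly that centralizer-counting by $T$-conjugacy class gives the uniform weight $|T|$ per allowed class (so that the normalization by $1/|T|$ really does produce the clean Frobenian polynomial above), and check that all the subtle interaction between the $q_p$-power map and conjugation by $\pi(\sigma_p)$ is captured by the single composite action $\pi*\chi^{-1}$ rather than something finer.
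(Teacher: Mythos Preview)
Your proof is correct and follows essentially the same approach as the paper: set up the local factors via the tame cocycle relation, show they are Frobenian with constant term $1$, and apply Proposition~\ref{prop:GeneralDirichlet}. The paper carries out the $b$-invariant computation by summing over $y=f(\Fr)$ first and then invoking orbit--stabilizer on conjugacy classes, whereas you package the same count by first grouping by the $T$-conjugacy class of $u=f(\tau_p)$ (getting weight exactly $1$ per fixed class) and then applying Burnside's lemma directly; this is a slightly cleaner bookkeeping of the same argument.
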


This generalizes the behavior of the local series proposed by the original Malle-Bhargava principle, and applying a Tauberian theorem (such as Delange's Theorem \ref{thm:delange}) gives the prediction outlined in the introduction
\[
N(L/K,T\normal G;X) \sim c'(K,T(\pi)) X^{1/a(T)} (\log X)^{b(K,T(\pi))-1}\,.
\]

We prove this by making use of the analytic tools in Section \ref{sec:analytic}. We first prove that the maps $p\mapsto Z^1(K_p,T(\pi))$ and $p\mapsto H^1(K_p,T(\pi))$ are Frobenian in $F/K$ for a particularly nice choice of $F$. The following lemma does this explicitly by constructing natural isomorphisms with cohomology groups depending only on $\Leg{F/K}{p}$.

\begin{lemma}\label{lem:KC}
Let $F/K$ be any finite extension containing the field of definition of $T(\pi)$ and the roots of unity $\mu_{|T|}$. Let $p$ be any place of $K$ such that $p\nmid |T|\infty$, $p$ unramified in $F$, and $\mathcal{N}_{K/\Q}(p)\equiv m\mod |T|$. Define
\[
G_m = \langle \tau, \Fr : c_{\Fr}(\tau)=\tau^{m}\rangle.
\]
Then the following hold:
\begin{itemize}
\item[(i)]{
Fix an embedding $G_{K_p}\hookrightarrow G_K$, inducing a Galois action of $G_{K_p}$ on $T$. Then $G_m/\langle \tau^{|T|}\rangle$ is isomorphic to a dense subgroup of $G_{K_p}^{\rm tame}/I_p^{|T|}$ with $\tau$ sent to a generator of inertia and $\Fr$ sent to Frobenius, and moreover the inflation map induces an isomorphism $Z^i(G_m,T(\pi))\xrightarrow{\sim} Z^i(G_{K_p},T(\pi))$ for each $i=0,1$.
}

\item[(ii)]{
The inflation isomorphism in part (i) is natural with respect to the choice of embedding, i.e. if $g\in G_K$ and $x\mapsto c_g(x)$ is another embedding $G_{K_p}\hookrightarrow G_K$, then conjugation by $g$ induces an isomorphism on crossed homomorphisms and the following diagram of isomorphisms commutes:
\[
\begin{tikzcd}
Z^i(G_m,T(\pi))\dar{\infl} \rar{c_g} & Z^i(G_m,T(\pi))\dar{\infl}\\
Z^i(G_{K_p},T(\pi)) \rar{c_g} &Z^i(G_{K_p},T(\pi))
\end{tikzcd}
\]
for $i=0,1$.
}

\item[(iii)]{
The isomorphisms $Z^i(G_m,T(\pi))\xrightarrow{\sim} Z^i(G_{K_p},T(\pi))$ factor through the coboundary relation and induce isomorphisms $H^i(G_m,T(\pi))\xrightarrow{\sim} H^i(G_{K_p},T(\pi))$.
}
\end{itemize}
We denote
\begin{align*}
Z^i(K_\sigma,T(\pi)) &:= Z^i(G_m,T(\pi))\,, & H^i(K_\sigma,T(\pi)) &:= H^i(G_m,T(\pi))\,,
\end{align*}
which are determined by $\Leg{F/K}{p}=\sigma$ uniquely up to conjugation on the action. We also denote
\begin{align*}
Z^1_{ur}(K_\sigma,T(\pi)) &:= Z^1(\langle \Fr\rangle, (T(\pi))^{\langle \tau\rangle})\,, & H^1_{ur}(K_\sigma,T(\pi)) &:= H^1(\langle \Fr\rangle, (T(\pi))^{\langle \tau\rangle})
\end{align*}
to be the kernels of the restriction to $\langle \tau\rangle$ map.
\end{lemma}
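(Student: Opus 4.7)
The plan is to factor the inflation map $Z^i(G_m, T(\pi)) \to Z^i(G_{K_p}, T(\pi))$ as a chain of natural bijections built from the standard structural decomposition of the local Galois group. The two hypotheses to exploit are that $p \nmid |T|$ and that $F/K$, which trivializes the $G_K$-action on $T$, is unramified at $p$; together these force inertia $I_p$ to act trivially on $T$ and make the wild inertia a pro-$p$ group valued in a target of order coprime to $p$.

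First I would peel off the wild inertia: since $I_p^{\rm wild}$ is pro-$p$ acting trivially on $T$, continuous homomorphisms $I_p^{\rm wild} \to T$ are trivial, and the non-abelian inflation-restriction sequence gives $Z^i(G_{K_p}^{\rm tame}, T(\pi)) \xrightarrow{\sim} Z^i(G_{K_p}, T(\pi))$ for $i = 0, 1$. Next, using the standard presentation of the tame quotient with topological generator $\tau$ of $I_p^{\rm tame}$ and Frobenius lift $\Fr$ satisfying $\Fr\tau\Fr^{-1} = \tau^{\mathcal{N}(p)}$, I would pass to the quotient $G_{K_p}^{\rm tame}/I_p^{|T|}$. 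Since $\mathcal{N}(p) \equiv m \pmod{|T|}$, this is a semidirect-product extension $1 \to \Z/|T|\Z \to G_{K_p}^{\rm tame}/I_p^{|T|} \to \hat{\Z} \to 1$ in which the commutator relation becomes $\Fr\tau\Fr^{-1} = \tau^m$, and the map $G_m/\langle\tau^{|T|}\rangle \to G_{K_p}^{\rm tame}/I_p^{|T|}$ sending $\tau \mapsto \tau$ and $\Fr \mapsto \Fr$ is a dense embedding of the corresponding discrete group, which is the first assertion of (i).

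For the cohomology, the essential observation is that any $f \in Z^1(G_{K_p}^{\rm tame}, T(\pi))$ restricted to $I_p^{\rm tame}$ is a continuous homomorphism (since $\tau$ acts trivially) landing in the $|T|$-torsion group $T$, so it factors through $I_p^{\rm tame}/|T| \cdot I_p^{\rm tame}$ and automatically vanishes on $I_p^{|T|}$; the $i=0$ case is immediate from $T^{I_p^{|T|}} = T$. Hence $Z^i(G_{K_p}^{\rm tame}, T(\pi)) = Z^i(G_{K_p}^{\rm tame}/I_p^{|T|}, T(\pi))$. Symmetrically, every $f \in Z^i(G_m, T(\pi))$ kills $\tau^{|T|}$ by the same torsion argument, so it factors through $G_m/\langle\tau^{|T|}\rangle$, and continuous cocycles on the profinite group $G_{K_p}^{\rm tame}/I_p^{|T|}$ valued in the finite discrete set $T$ correspond by restriction to cocycles on the dense subgroup $G_m/\langle\tau^{|T|}\rangle$ (any continuous cocycle factors through a finite quotient already reached by the dense image). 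Composing these identifications produces the isomorphism in (i).

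For (ii), changing the embedding $G_{K_p} \hookrightarrow G_K$ by conjugation by $g$ twists the $G_{K_p}$-action on $T$ by $\pi(g)$ and induces a compatible twist on each cohomology set in the chain; the outer square commutes because inflation-restriction, restriction to a dense subgroup, and conjugation are all natural in the group homomorphism. Part (iii) is then formal: a coboundary has the shape $f(x) = c_{\pi(x)}(t) \cdot t^{-1}$ for some $t \in T$, a description manifestly preserved by each pull-back and restriction in the chain, so the coboundary relation transports intact and yields the corresponding isomorphisms on $H^i$. The main obstacle I anticipate is handling the non-abelian version of inflation-restriction carefully for $i = 1$: one must work with the exact-sequence-of-pointed-sets form, and explicitly verify that cocycles vanishing on a normal subgroup descend to cocycles on the quotient valued in the fixed-point subgroup — standard but the details (especially the interaction of the twist $c_g$ with the cocycle identity) should be checked case by case rather than invoked as a black box.
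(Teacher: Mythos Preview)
Your proposal is correct and follows essentially the same route as the paper: both arguments use that $I_p$ acts trivially on $T$ (since $p$ is unramified in $F$), factor through the tame quotient and then through $I_p^{|T|}$ (since $p\nmid |T|$), identify $G_m/\langle\tau^{|T|}\rangle$ as a dense subgroup of $G_{K_p}^{\rm tame}/I_p^{|T|}$ via the explicit presentation, and conclude by restricting continuous cocycles on a profinite group to a dense subgroup; parts (ii) and (iii) are handled in both by the naturality of inflation and restriction. The only cosmetic difference is that you peel off wild inertia as a separate step, whereas the paper folds it into the single observation that $\Hom(I_p,T)$ factors through tame inertia.
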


\begin{proof}
Fix a place $p$ satisfying the required hypotheses. Because $p$ is not ramified in $F$, it follows that $I_p$ acts trivially on $T$ and
\[
Z^i(I_p,T(\pi)) = \begin{cases}
T & i=0\\
\Hom(I_p,T) & i=1.
\end{cases}
\]
If $i=0$, this implies $Z^0(G_{K_p},T)=T=Z^0(G_m,T)$, so this case of part (i) is trivial.

If $i=1$, we note that $p\nmid |T|$ implies that $\Hom(I_p,T)$ factors through tame inertia and through the quotient $I_p/I_p^{|T|}$. This implies inflation $Z^1(G_{K_p}^{\rm tame}/I_p^{|T|},T(\pi))\rightarrow Z^1(K_p,T(\pi))$ is an isomorphism. The explicit presentation of $G_{K_p}^{\rm tame}$ as a profinite group
\[
G_{K_p}^{\rm tame} = \langle \tau_p, \Fr_p : c_{\Fr_p}(\tau_p) = \tau_p^{\mathcal{N}_{K/\Q}(p)}\rangle
\]
is exactly the same as for $G_m$ as a discrete group, where we note
\[
\mathcal{N}_{K/\Q}(p)\equiv \Leg{K(\mu_{|T|})/K}{p}\mod |T|
\]
follows from class field theory, and
\[
\Leg{F/K}{p}\equiv \Leg{K(\mu_{|T|})/K}{p}\mod |T|
\]
follows from $K(\mu_{|T|})\subset F$. This implies $G_m/\langle \tau^{|T|}\rangle$ embeds in $G_{K_p}^{\rm tame}/I_p^{|T|}$ naturally by $\tau\mapsto \tau_p$ and $\Fr\mapsto \Fr_p$ with a dense image. $T$ finite and all coclasses continuous implies that the restriction map induces an isomorphism $Z^1(G_{K_p}^{\rm tame}/I_p^{|T|},T)\cong Z^1(G_m,T)$. This concludes the proof of part (i).

Part (ii) follows from the naturality of restriction and inflation maps, which are the maps used to induce the isomorphisms
\[
Z^i(G_m,T)\xleftarrow{\res} Z^i(G_{K_p}^{\rm tame}/I_p^{|T|},T)\xrightarrow{\inf} Z^i(G_K,T).
\]

Part (iii) then follows from the fact that the restriction and inflation maps are known to factor through the coboundary relation.
\end{proof}

This implies $Z^1(K_p,T(\pi))$ is Frobenian, and is all the set up we need to prove Theorem \ref{thm:MBlocal}.

\begin{proof}[Proof of Theorem \ref{thm:MBlocal}]
We consider the Euler factors as coming from polynomials
\[
Q_p(x) = \frac{1}{|T|}\sum_{f_p\in Z^1(K_p,T(\pi))} x^{\nu_p(\disc_\pi(f_p))}\,.
\]
We will prove that these polynomials satisfy the hypotheses of Proposition \ref{prop:GeneralDirichlet}, i.e. they are Frobenian in a field extension $F/K$ containing the field of definition of $T(\pi)$ and the roots of unity $\mu_{|T|}$ and $Q_p(x) = 1+ x\C[x]$ for all but finitely many places.

Let $S$ be a finite set of places containing all $p\mid |T|\infty$, $p$ ramified in $F/K$, and $p$ ramified in $\pi$. For $p\not\in S$, $p$ is not ramified in $\pi$ and $p$ is at most tamely ramified so that
\begin{align*}
\nu_p(\disc_\pi(f)) &= \nu_p\left(\disc(f*\pi)\right)\\
&= \ind((f*\pi)(\tau_p))\\
&= \ind(f(\tau_p))\,.
\end{align*}
In particular, this implies that $\nu_p(\disc_\pi(f_p))$ depends only on $f(\tau_p)\in T$. Under the isomorphism given in Lemma \ref{lem:KC}, if $f_\sigma\in Z^1(K_\sigma,T)$ is the isomorphic image of $f_p$ then it follows that $\nu_p(\disc_\pi(f_p))$ depends only on $f_\sigma(\tau)$. For each $p\not\in S$ with $\Leg{F/K}{p}=\sigma$ this implies
\[
Q_p(x) = \frac{1}{|T|}\sum_{f_\sigma\in Z^1(K_\sigma,T(\pi))} x^{\ind(f_\sigma(\tau))}\,,
\]
i.e. it is Frobenian in $F/K$.

All that remains is to consider the constant term, which is given by
\begin{align*}
\frac{1}{|T|} \sum_{\substack{f_p\in Z^1_{ur}(K_p,T(\pi))}} 1 &= \frac{|Z^1(\langle \Fr_p\rangle,T(\pi))|}{|T|}=1\,.
\end{align*}

Thus, Proposition \ref{prop:GeneralDirichlet} implies
\[
\prod_p Q_p(\mathcal{N}_{K/\Q}(p)^{-s}) = \zeta_K(a(Q)s)^{b(Q)}G(s)\,,
\]
where $G(s)$ is holomorphic on ${\rm Re}(s)\ge 1/a(Q)$. We note that $Q_p(x)$ has all nonnegative rational coefficients, which implies that it has no zeroes on the positive real line. In particular, $Q_p(\mathcal{N}_{K/\Q}(p)^{-1/a(Q)})\ne 0$ for all $p$, which implies $G(1/a(Q))\ne 0$ and we have identified the rightmost pole as being $s=1/a(Q)$ of order $b(Q)$. It now suffices to show that these agree with $a(T)$ and $b(K,T(\pi))$.

The $a$-invariant is easier, so we address that one first. For all $p\not\in S$, the smallest power of $\mathcal{N}_{K/\Q}(p)^{-s}$ that occurs in $Q_p(\mathcal{N}_{K/\Q}(p)^{-s})$ is given by $f_p$ such that $\nu_p(\disc_\pi(f_p))$ is nonzero and minimized. In other words, this implies
\begin{align*}
a(Q) &= \min_{\sigma\in \Gal(F/K)} - \deg(Q_\sigma(1/x)-1)\\
&= \min_{\sigma\in \Gal(F/K)} \min_{f\not\in Z^1_{ur}(K_\sigma,T(\pi))} \nu_p(\disc_\pi(f))\\
&= \min_{\sigma\in \Gal(F/K)} \min_{f\not\in Z^1_{ur}(K_\sigma,T(\pi))} \ind(f(\tau))\,.
\end{align*}
For any $t\in T$, choose $\sigma=1\in \Gal(F/K)$ and $p\not\in S$ such that $\Fr_p(F/K)=\sigma$ (up to conjugation). Define a crossed homomorphism $f:G_{1}\rightarrow T$ given by $\Fr\mapsto 1$ and $\tau\mapsto t$. Noting that $p$ is unramified in $\pi$, this is well-defined if and only if it respects the only relation
\[
c_{\Fr}(\tau) = \tau^{1}\,.
\]
This follows by construction:
\begin{align*}
f(c_{\Fr}(\tau)) &= f(\Fr\cdot \tau\cdot \Fr^{-1})\\
&= f(\Fr) c_{\pi(\Fr_p)}(f(\tau)) c_{\pi(\Fr_p\tau_p)}f(\Fr^{-1})\\
&= c_{\sigma}(f(\tau))\\
&= f(\tau)\,.
\end{align*}
This implies that for any $t\in T-\{1\}$, there exists at least one $\sigma$ such that we can attain $\ind(f(\tau))=\inf(t)$. Therefore
\[
a(Q) = \min_{t\in T-\{1\}} \ind(t) = a(T)\,.
\]

The $b$-invariant is a little more involved. We are given
\[
b(Q) = \frac{1}{[F:K]}\sum_{\sigma\in \Gal(F/K)} \frac{\#\{f\in Z^1(K_\sigma,T(\pi)) : \ind(f(\tau))=a(T)\}}{|T|}\,.
\]
Again, we know that $f\in Z^1(K_\sigma,T(\pi))$ is given by images for $\Fr,\tau\in G_m$ if and only if it respects the relationship
\[
c_{\Fr}(\tau)=\tau^{\mathcal{N}_{K/\Q}(p)}\,.
\]
Noting that $p$ unramified in $\pi$ implies $f|_{\langle \tau\rangle}$ is a homomorphism, we can equivalently check that
\[
f(\Fr) c_{\pi(\Fr_p)}(f(\tau)) c_{\pi(\Fr_p\tau_p)}(f(\Fr^{-1})) = f(\tau)^{\mathcal{N}_{K/\Q}(p)}\,.
\]
Applying the rule $f(x^{-1}) = c_{\pi(x^{-1})}(f(x)^{-1})$ for crossed homomorphisms and noting that the cyclotomic character $\chi:G_K\rightarrow \hat{\Z}^{\times}$ satisfies $\chi(\Fr_p)=\mathcal{N}_{K/\Q}(p)$ implies that it suffices to check
\[
f(\Fr) c_{\pi(\Fr_p)}(f(\tau)) c_{\pi(\Fr_p\tau_p\Fr_p^{-1})}(f(\Fr)^{-1}) = f(\tau)^{\chi(\Fr_p)}\,.
\]
Lastly, we notice that $p$ unramified in $\pi$ implies $\pi(\Fr_p \tau_p \Fr_p^{-1})=1$. Therefore it suffices to check that
\[
f(\Fr) c_{\pi(\Fr_p)}(f(\tau)) f(\Fr)^{-1} = f(\tau)^{\chi(\Fr_p)}\,,
\]
or equivalently
\[
[f(\Fr),c_{\pi(\Fr_p)}(f(\tau))]=f(\tau)^{\chi(\Fr_p)}c_{\pi(\Fr_p)}(f(\tau)^{-1})\,.
\]
Set $A(T) = \{t\in T : \ind(t)=a(T)\}$ and take a sum over all possible choices for $f(\tau)=t$ with $\ind(t)=a(T)$ and $f(\Fr)=y$ satisfying this relationship. This gives
\begin{align*}
b(Q) = \frac{1}{[F:K]} \sum_{\sigma\in \Gal(F/K)} \sum_{\substack{t\in A(T)}}\sum_{\substack{y\in T\\ [y,c_{\pi(\sigma)}(t)]=t^{\chi(\sigma)}c_{\pi(\sigma)}(t^{-1})} }\frac{1}{|T|}\,.
\end{align*}
First summing over all $y$, there are two possibilities. Either there are no values of $y\in T$ such that $[y,c_{\pi(\sigma)}(t)]=t^{\chi(\sigma)}c_{\pi(\sigma)}(t^{-1})$, or there are exactly $|C_T(c_{\pi(\sigma)}(t))|$ of them, lying in some coset of the centralizer $C_T(c_{\pi(\sigma)}(t))=\{y\in T \mid [y,c_{\pi(\sigma)}(t)]=1\}$. Therefore we can write
\begin{align*}
b(Q) = \frac{1}{[F:K]}\sum_{\sigma\in \Gal(F/K)} \sum_{\substack{t\in A(T)\\ t^{\chi(\sigma)}c_{\pi(\sigma)}(t^{-1})\in [T,c_{\pi(\sigma)}(t)]} }\frac{|C_T(c_{\pi(\sigma)}(t))|}{|T|}\,.
\end{align*}
Relabeling the summation via the automorphism $t\mapsto c_{\pi(\sigma^{-1})}(t)$ yields
\begin{align*}
b(Q) &= \frac{1}{[F:K]}\sum_{\sigma\in \Gal(F/K)} \sum_{\substack{t\in A(T)\\ c_{\pi(\sigma^{-1})}(t^{\chi(\sigma)})t^{-1}\in [T,t]} }\frac{|C_T(t)|}{|T|}\,.
\end{align*}
We now transition to conjugacy classes. The condition $c_{\pi(\sigma^{-1})}(t^{\chi(\sigma)})t^{-1}\in [T,t]$ is equivalent to $c_{\pi(\sigma^{-1})}(t^{\chi(\sigma)})$ being conjugate to $t$. Moreover, $\frac{|C_T(t)|}{|T|}$ is equal to $\frac{1}{|C|}$ for $C\subset T$ the conjugacy class of $t$. The (right) action by $\chi*\pi^{-1}$ factors through conjugacy classes, so this summation reduces to a summation over conjugacy classes $C\subset T$ containing an element of $A(T)$. The index $\ind(x)$ is invariant under conjugation, which implies the sum reduces to a summation over conjugacy classes $C\subset A(T)$.
\begin{align*}
b(Q) &= \frac{1}{[F:K]}\sum_{\sigma\in \Gal(F/K)} \sum_{\substack{C\subset A(T)\\ c_{\pi(\sigma^{-1})}(C^{\chi(\sigma)})=C} }1\\
&= \sum_{C\subset A(T)}\frac{1}{[F:K]}\#\{\sigma\in \Gal(F/K) \mid c_{\pi(\sigma^{-1})}(C^{\chi(\sigma)})=C\}\\
&= \sum_{C\subset A(T)} \frac{|{\rm Stab}_{\Gal(F/K)}(C)|}{|\Gal(F/K)|}\,,
\end{align*}
where the stabilizer is under the (right) action $\chi*\pi^{-1}$, or equivalently the (left) action $\pi*\chi^{-1}$. The orbit stabilizer formula implies that
\[
\frac{|{\rm Stab}_{\Gal(F/K)}(C)|}{|\Gal(F/K)|} = \frac{1}{\#(\text{orbit of conj. class }C\subset A(T)\text{ under }\pi*\chi^{-1})}\,,
\]
so that we have shown
\begin{align*}
b(Q) &= \sum_{C\subset A(T)} \frac{1}{\#(\text{orbit of conj. class }C\subset A(T)\text{ under }\pi*\chi^{-1})}\\
&=\#\left\{\text{orbits of }\pi*\chi^{-1}\text{ on }\{\text{conjugacy classes }C\subset A(T)\}\right\}\\
&= \#\left(\{\text{conjugacy classes }C\subset A(T)\}/\pi*\chi^{-1}\right)\\
&=b(K,T(\pi))\,.
\end{align*}
\end{proof}

\subsection{The First Cohomology Group}

Through the course of this paper we will consider towers as both corresponding to crossed homomorphisms $Z^1(K,T(\pi))$ as well as $1$-coclasses $H^1(K,T(\pi))$. The two perspectives have different benefits, but are virtually equivalent:

\begin{lemma}\label{lem:pidisc}
Let $G\subset S_n$ be a transitive subgroup and $\pi:G_K\rightarrow G$ be a homomorphism.
\begin{enumerate}
\item[(i)]{The $\pi$-discriminant factors through the Galois cohomology group $H^1(K,T(\pi))$,}

\item[(ii)]{``surjectivity'' factors through the Galois cohomology group $H^1(K,T(\pi))$, i.e. if $f,f'\in Z^1(K,T(\pi))$ satisfy $[f]=[f']$ (where $[f]$ denotes the equivalence class of $f$ in $H^1(K,T(\pi))$), and $f*\pi$ is surjective then $f'*\pi$ is also surjective,}

\item[(iii)]{
$\#\{f\in Z^1(K,T(\pi);X) : f*\pi \text{ is surjective}\}= |T/T^G| \cdot \#\{[f]\in H^1(K,T(\pi);X) : f*\pi \text{ is surjective}\}$.
}
\end{enumerate}
\end{lemma}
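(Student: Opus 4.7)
The plan is to derive all three parts from a single computation describing how $f*\pi$ changes when $f$ is replaced by a cohomologous crossed homomorphism. If $f, f' \in Z^1(K,T(\pi))$ are cohomologous, then by definition there exists $t \in T$ with $f'(x) = t^{-1} f(x)\, \pi(x) t \pi(x)^{-1}$ for every $x \in G_K$. Multiplying on the right by $\pi(x)$ cancels the trailing $\pi(x)^{-1}$ and yields
\[
(f' * \pi)(x) \;=\; t^{-1}\,(f * \pi)(x)\,t,
\]
so the homomorphisms $f*\pi$ and $f'*\pi$ differ by conjugation by $t \in T \subseteq G \subseteq S_n$.

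With this identity in hand, parts (i) and (ii) follow immediately. For (i), the discriminant of $f*\pi$ was defined to be the discriminant of the $G$-\'etale algebra attached to $f*\pi : G_K \to S_n$, and this \'etale algebra depends only on the $G_K$-set $\{1,\ldots,n\}$ up to $G_K$-equivariant relabeling; conjugation in $S_n$ is exactly such a relabeling, so $\disc(f'*\pi) = \disc(f*\pi)$ and the $\pi$-discriminant descends to $H^1(K,T(\pi))$. For (ii), conjugation by $t \in G$ preserves the image as a subgroup of $G$, so $f*\pi$ is surjective onto $G$ if and only if $f'*\pi$ is.

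For (iii), I specialize to the abelian setting consistent with the rest of the paper, so that $Z^1(K,T(\pi)) \to H^1(K,T(\pi))$ is a surjective group homomorphism with kernel the coboundary group $B^1$. Parametrize $B^1$ by the surjection $T \to B^1$ sending $t$ to the cocycle $x \mapsto (\pi(x) t \pi(x)^{-1}) t^{-1}$; its kernel is precisely the $G_K$-invariants $T^{G_K}$ of $T$ under the action induced by $\pi$. Whenever there exists at least one surjective $f*\pi$ (otherwise both sides of (iii) vanish) one has $\pi(G_K)\, T = G$, so the conjugation action of $G$ on $T$ and the $G_K$-action through $\pi$ have the same invariants, giving $T^{G_K} = T^G$ and hence $|B^1| = |T/T^G|$. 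Combined with parts (i) and (ii), the map $Z^1 \to H^1$ restricts to a $|T/T^G|$-to-$1$ map on the subsets of cocycles with $\pi$-discriminant $<X$ and $f*\pi$ surjective, which is exactly the identity (iii).

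The only step requiring any real care is the identification $T^{G_K} = T^G$, which depends on the condition $\pi(G_K)\,T = G$; since this is automatic as soon as a surjective lift exists, it is not a genuine obstacle in the regime where (iii) is nontrivial. Everything else is a direct unwinding of cocycle identities, flowing from the conjugation relation $(f'*\pi) = t^{-1}(f*\pi) t$ established at the outset.
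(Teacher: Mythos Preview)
Your proof is correct and follows essentially the same approach as the paper: both hinge on the identity $(f'*\pi)(x) = c_t((f*\pi)(x))$ for cohomologous $f,f'$, from which (i) and (ii) are immediate. For (iii) there is a small difference worth noting: you restrict to abelian $T$ and compute $|B^1| = |T/T^{G_K}| = |T/T^G|$ using $\pi(G_K)T = G$, whereas the paper argues directly (without assuming $T$ abelian) that each equivalence class of a \emph{surjective} $f$ has size $|T/T^G|$, since $c_t$ fixes a surjective $f*\pi$ pointwise iff $t \in T^G$. The paper's version is slightly more general, but in the abelian setting used throughout the paper your argument is equally valid.
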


\begin{proof}[Proof of Lemma \ref{lem:pidisc}]
For part (i), suppose $f\in Z^1(K,T(\pi))$ and $t\in T$. Denote by $f'\in Z^1(K,T(\pi))$ the crossed homomorphism sending $x\mapsto t f(x) c_{\pi(x)}(t^{-1})$, which is equivalent to $f$ under the coboundary relation. Then for any $x\in G_K$, it follows that
\begin{align*}
(f'*\pi)(x) &= t f(x) c_{\pi(x)}(t^{-1})\pi(x)\\
&= t f(x)\pi(x) t^{-1}\\
&= c_{t}((f*\pi)(x)).
\end{align*}
This implies $f'*\pi$ is conjugate to $f*\pi$. In particular, $(f'*\pi)(G_{\mathfrak{p},i})$ is conjugate to $(f*\pi)(G_{\mathfrak{p},i})$ for all higher ramification groups. The number of orbits of an element in $S_n$ is determined by its cycle type, which is invariant under conjugation. The number of orbits of a group $H\le S_n$ is invariant under conjugation by the same argument. The discriminant $\disc(f*\pi)$ is determined by the number of orbits of $(f*\pi)(G_{\mathfrak{p},i})$ for all higher ramification groups, which we determined is independent of the coboundary relation. This implies $\disc(f*\pi)$ factors through $H^1(K,T(\pi))$, concluding the proof.

For part (ii), we remark that the image $(f*\pi)(G_K)=G$ is invariant under conjugation, which implies it is invariant under the coboundary relation. This implies that surjectivity of $f*\pi$ is a well-defined property of a $1$-coclass $[f]\in H^1(K,T(\pi))$.

For part (iii), it now suffices to show that the equivalence classes of surjective maps under the coboundary relation all have size $|T/T^G|$. This is immediate from the standard group theory fact that the map $T\rightarrow \Aut(G)$ sending $t\mapsto c_t$ the automorphism by conjugation is a homomorphism with kernel $T^G$, and that a surjective map $f*\pi$ satisfies
\[
(f*\pi)(x) = c_t((f*\pi)(x))
\]
for all $x\in G_K$ if and only if $g=c_t(g)$ for all $g\in G$.
\end{proof}

The first cohomology group also comes with restriction maps
\[
\res_p:H^1(K,T(\pi))\rightarrow H^1(K_p,T(\pi))\,,
\]
so one could reasonably ask if we should consider the following local series
\[
\prod_p \frac{1}{|H^0(K_p,T(\pi))|} \left(\sum_{f_p\in H^1(K_p,T(\pi))} \mathcal{N}_{K/\Q}(\disc_\pi(f_p))^{-s}\right)
\]
instead of the one over crossed homomorphisms for generalizing the Malle-Bhargava principle.

In general, the coboundary relation does not give equivalence classes of the same size. This means that in the local series over $H^1$, the $f_p$ may be over- or under-counted depending on how conjugation acts on the image $(f_p*\pi)(G_{K_p})$. This is a departure from the global behavior, as when counting $N(L/K,T\normal G;X)$ we do not have a reason to believe that certain local behaviors are counted differently depending on the image of the decomposition group. For general groups $T$ this may make it difficult to use powerful local-to-global results in Galois cohomology in order to verify the prediction given by the Malle-Bhargava principle.

In Sections \ref{sec:AsymptoticWiles} and \ref{sec:counting} we will specifically consider the case when $T$ is abelian by using powerful cohomological results. One of the essential reasons our techniques work for $T$ abelian is that the $Z^1$ and $H^1$ perspectives are exactly equivalent (up to a constant multiple), allowing us to prove results over $H^1$ and immediately conclude the same results over $Z^1$. This is a consequence of the equivalence of local series, which we prove below:

\begin{proposition}\label{prop:localZ1toH1}
If $T$ is abelian, then
\begin{enumerate}
\item[(i)]{$|Z^1(K,T(\pi);X)| = |T/T^G| \cdot |H^1(K,T(\pi);X)|$,
}

\item[(ii)]{
\begin{align*}
&\prod_p \frac{1}{|T|}\left(\sum_{f_p\in Z^1(K_p,T(\pi))} \mathcal{N}_{K/\Q}(\disc_\pi(f_p))^{-s}\right)\\
&=\prod_p \frac{1}{|H^0(K_p,T(\pi))|} \left(\sum_{f_p\in H^1(K_p,T(\pi))} \mathcal{N}_{K/\Q}(\disc_\pi(f_p))^{-s}\right)\,.
\end{align*}
}
\end{enumerate}
\end{proposition}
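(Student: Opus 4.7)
The plan is to reduce both parts to a single observation: when $T$ is abelian, the set of $1$-coboundaries $B^1$ becomes a genuine subgroup of $Z^1$, the coboundary map is an honest group homomorphism, and consequently every cohomology class has the same number of representatives in $Z^1$. Combined with Lemma \ref{lem:pidisc}(i), which says that the $\pi$-discriminant is invariant under the coboundary relation, this lets us pass between counts over $Z^1$ and counts over $H^1$ just by multiplying by the uniform fiber size.

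More concretely, the first step is to identify the coboundary map explicitly. For $T$ abelian, the map $\partial:T\to Z^1(K,T(\pi))$ sending $t$ to the crossed homomorphism $\sigma\mapsto t\cdot c_{\pi(\sigma)}(t)^{-1}$ is a group homomorphism whose kernel is exactly the subgroup of $T$ fixed by the $G_K$-action induced by $\pi$, i.e. $T^{\pi(G_K)}$. Because $T$ is abelian it acts trivially on itself by conjugation, so $T^{\pi(G_K)}=T^{\pi(G_K)T}$; in the setting of the paper where $\pi$ satisfies $\pi(G_K)T=G$, this gives $T^{\pi(G_K)}=T^G$, and the image $B^1(K,T(\pi))$ has order $|T/T^G|$. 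Thus every class in $H^1(K,T(\pi))$ is represented by exactly $|T/T^G|$ crossed homomorphisms.

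To assemble part (i), I would invoke Lemma \ref{lem:pidisc}(i) to conclude that the $\pi$-discriminant descends to a well-defined function on $H^1(K,T(\pi))$, partition $Z^1(K,T(\pi);X)$ into cohomology classes, and note that each class contributes exactly $|T/T^G|$ elements to the count. The asserted identity follows immediately. No analytic input or further results from the paper are required for this step.

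For part (ii) I would repeat the argument locally at each place $p$, noting that the exact same calculation identifies the kernel of the local coboundary map as $H^0(K_p,T(\pi))=T^{G_{K_p}}$ — crucially with no assumption that $\pi(G_{K_p})T=G$, which need not hold locally — so each local cohomology class has $|T|/|H^0(K_p,T(\pi))|$ preimages in $Z^1(K_p,T(\pi))$. Using again that the local $\pi$-discriminant is constant on each coclass (Lemma \ref{lem:pidisc}(i) applied to $K_p$), the local sum on the $Z^1$ side equals $|T|/|H^0(K_p,T(\pi))|$ times the corresponding local sum on the $H^1$ side, and dividing by $|T|$ gives the desired factor-by-factor equality of Euler products. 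There is no real obstacle here; the only subtle point worth flagging is that one must justify the global identification $T^{\pi(G_K)}=T^G$ in part (i) but not demand the analogous local identification in part (ii), since the $H^0$ on the right-hand side is the correct normalization regardless.
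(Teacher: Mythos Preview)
Your proposal is correct and follows essentially the same approach as the paper: both arguments reduce to the observation that for $T$ abelian the coboundary map $T\to B^1$ has kernel $T^{G_{K}}$ (globally) or $T^{G_{K_p}}=H^0(K_p,T(\pi))$ (locally), so every cohomology class has the same number of cocycle representatives, and Lemma~\ref{lem:pidisc}(i) lets the discriminant pass to the quotient. Your treatment is slightly more explicit than the paper's in justifying $T^{\pi(G_K)}=T^G$ via $\pi(G_K)T=G$ and the triviality of the inner $T$-action, but the substance is identical.
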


\begin{proof}
For $T$ abelian, the coboundary relation is given by the quotient relation by the group of $1$-coboundaries. Part (i) follows from $|B^1(K,T(\pi))| = |T(\pi)/(T(\pi))^G|=|T/T^G|$. For part (ii), we utilize the fact that $|B^1(K_p,T(\pi))|=|(T(\pi))/(T(\pi))^{G_{K_p}}|$. This implies
\begin{align*}
&\prod_p \frac{1}{|T|}\left(\sum_{f_p\in Z^1(K_p,T(\pi))} \mathcal{N}_{K/\Q}(\disc_\pi(f_p))^{-s}\right)\\
&= \prod_p \frac{|(T(\pi))/(T(\pi))^{G_{K_p}}|}{|T|} \left(\sum_{f_p\in H^1(K_p,T(\pi))} \mathcal{N}_{K/\Q}(\disc_\pi(f_p))^{-s}\right)\\
&=\prod_p \frac{1}{|(T(\pi))^{G_{K_p}}|} \left(\sum_{f_p\in H^1(K_p,T(\pi))} \mathcal{N}_{K/\Q}(\disc_\pi(f_p))^{-s}\right)\\
&=\prod_p \frac{1}{|H^0(K_p,T(\pi))|} \left(\sum_{f_p\in H^1(K_p,T(\pi))} \mathcal{N}_{K/\Q}(\disc_\pi(f_p))^{-s}\right)\,.
\end{align*}
\end{proof}

\subsection{The Inconsistency of the Malle-Bhargava Principle}

After seeing how nicely the Malle-Bhargava principle generalizes to counting $(T\normal G)$-towers, we now turn to the known inconsistencies of the principle. Kl\"uners demonstrated this by showing that $b(K,C_3\wr C_2)$ is the wrong value \cite{kluners2005}. Kl\"uners' paper is not long, and does not dwell too much on what is causing the problem. Kl\"uners limits his insight into the issue to essentially stating that too many roots of unity can cause problems, and lists a family of groups where one should expect it to cause problems by a similar argument to what he uses for $C_3\wr C_2$.

T\"urkelli \cite{turkelli2015} proposed the first, and to the author's knowledge only, correction to Malle's conjecture which accounts for Kl\"uners' counter example. T\"urkelli proposes that we should instead have $\log X$ to the power of
\[
B(K,G) = \max_{\phi} b_{\phi}(K,G),
\]
where $b_\phi(K,G)$ is the usual invariant from Malle's conjecture, except instead of modding out by the conjugation action and the cyclotomic action you mod out by the twisted action $\phi*\chi^{-1}$ where $\phi\in \Hom(G_K^{ab},G/N)$ for any normal subgroup $N\normal G$ containing the commutator of $G$ with $a(N) = a(G)$. This is the same twisted action we see popping up in the Malle-Bhargava principle for $N(L/K,T\normal G;X)$ and $|Z^1(K,T(\pi);X)|$, where we twist by some homomorphism $\pi$ inducing the Galois action on $T$. T\"urkelli justifies this modification by appealing to the function field analog, where he proves that this is the correct notion for $G = T\rtimes B$ when $B$ is cyclic containing no nontrivial normal subgroups of $G$ and $|G|$ is prime to the characteristic. In this case, $B(K,G)$ is found to be the number of connected components of a certain Hurwitz scheme.

We like to think of function fields as being an ``easier version" of number fields in which all the same statements are generally true, but there is usually no concrete way to take a proof in the function field case and translate in into a proof in the number field case. In particular, subtle differences between abelian extensions of function fields and number fields make it more difficult to use T\"urkelli's results to justify the modification in the number field case. Corollary \ref{cor:lowerbound} is the first known result to give strong evidence that T\"urkelli's modification is correct for number fields specifically, or is at least moving in the right direction.

We can show more theoretic evidence that T\"urkelli's modification is correct by considering a more general inconsistency with the Malle-Bhargava principle on towers, of which Kl\"uners' counter example is a special case:

\begin{proposition}\label{prop:differentpi}
There exist transitive subgroups $G\subset S_n$ with $T\normal G$, $A(G)\subset T$, $(L/K,\iota_B)$ a $B$-extension, and $\pi:G_K\rightarrow G$ a $(T\normal G)$-tower such that
\[
b(K,T(\pi)) > b(K,G)
\]
but
\[
N(L/K,T\normal G;X) \le N(K,G;X)\,.
\]
This occurs exactly when the action
\[
\pi*\chi^{-1}:G_K\rightarrow \Aut(\{\text{conjugacy classes }C\subset A(T)\})
\]
satisfies $(\pi*\chi^{-1})(G_K) \ne \pi(G_K)\chi(G_K)^{-1}$.
\end{proposition}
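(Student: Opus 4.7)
The plan is to prove both the existence statement and the characterization by a direct comparison of the orbit-counting formulas defining $b(K, T(\pi))$ and $b(K, G)$ under the hypothesis $A(G) \subset T$, then to note the trivial inequality of counting functions, and finally to verify the condition is realized by Kl\"uners' counter-example.

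First, I would rewrite $b(K, G)$ as an orbit count on the same underlying set that defines $b(K, T(\pi))$. Since $A(G) \subset T$ we have $A(G) = A(T)$ and $a(G) = a(T)$. Passing from $T$-conjugacy classes in $A(T)$ to $G$-conjugacy classes is the same as quotienting by the conjugation action of the image $\pi(G_K) = G$ in $\Aut(A(T))$, and further quotienting by the cyclotomic action then identifies $b(K, G)$ with the number of orbits of $T$-conjugacy classes in $A(T)$ under the subgroup $\pi(G_K) \cdot \chi(G_K)^{-1}$ of $\Aut(A(T))$ (using that power maps commute with conjugation, so the product is automatically a subgroup). In contrast, $b(K, T(\pi))$ counts orbits under the diagonally embedded subgroup $(\pi * \chi^{-1})(G_K)$, which is always contained in this product. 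Since a smaller acting group produces more orbits, we always have $b(K, T(\pi)) \geq b(K, G)$, with strict inequality precisely when $(\pi * \chi^{-1})(G_K)$ is a proper subgroup of $\pi(G_K) \cdot \chi(G_K)^{-1}$. This yields the ``exactly when'' clause of the proposition.

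Second, the bound $N(L/K, T \normal G; X) \leq N(K, G; X)$ is immediate from the decomposition $N(K, G; X) = \sum_L N(L/K, T \normal G; X)$ summed over all intermediate $B$-extensions $L/K$. Combining with the previous step, whenever the condition on images holds the Malle-Bhargava prediction $X^{1/a(T)}(\log X)^{b(K, T(\pi)) - 1}$ for the tower-counting function strictly dominates the prediction $X^{1/a(G)}(\log X)^{b(K, G) - 1}$ for the ambient $N(K, G; X)$, producing the inconsistency claimed.

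Finally, I would check that the condition $(\pi * \chi^{-1})(G_K) \neq \pi(G_K) \cdot \chi(G_K)^{-1}$ is satisfied for Kl\"uners' counter-example $G = C_3 \wr C_2$ with $T = C_3 \times C_3$ and $B = C_2$ over $K = \Q$, taking $L = \Q(\sqrt{-3})$. Both $\pi: G_\Q \twoheadrightarrow C_2$ cutting out $L$ and the mod-$3$ cyclotomic character $\chi \bmod 3: G_\Q \to (\Z/3\Z)^\times \cong C_2$ have the same kernel, so in $\Aut(T) = GL_2(\mathbb{F}_3)$ the diagonal image $(\pi * \chi^{-1})(G_\Q)$ is the order-$2$ subgroup generated by the composite of the coordinate swap and the scalar $-1$, while the product $\pi(G_\Q) \cdot \chi(G_\Q)^{-1}$ is generated independently by these two involutions and has order $4$. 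The hardest part will be the explicit orbit bookkeeping inside $\Aut(A(T))$: writing out the orbits of $A(T)$ under both actions to confirm the strict inequality of orbit counts and matching the resulting numerical mismatch with Kl\"uners' original calculation that Malle's predicted log factor is too small for $C_3 \wr C_2$. Once this bookkeeping is carried out, the orbit count under the diagonal action produces the ``too large'' log exponent for the tower-counting function, which is precisely the mechanism making the prediction internally inconsistent.
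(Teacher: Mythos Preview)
Your approach is exactly what the paper does: the paper's entire proof is the one-line remark that the proposition ``follows immediately from the definitions of $a(T)$, $a(G)$, $b(K,T(\pi))$, and $b(K,G)$,'' followed by the worked Kl\"uners example. Your unpacking of that remark---identifying $b(K,G)$ with the orbit count under the product group $\pi(G_K)\cdot\chi(G_K)^{-1}$ and $b(K,T(\pi))$ with the orbit count under the diagonal image, then invoking the trivial inequality of counting functions and verifying the $C_3\wr C_2$ example---is the intended argument.

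One small gap to flag: you assert ``strict inequality precisely when $(\pi*\chi^{-1})(G_K)$ is a proper subgroup of $\pi(G_K)\cdot\chi(G_K)^{-1}$.'' The direction ``equal images $\Rightarrow$ equal orbit counts'' is immediate, but the converse ``proper subgroup $\Rightarrow$ strictly more orbits'' is not true for arbitrary group actions (a proper subgroup can still act with the same orbits). The paper's ``exactly when'' clause is correspondingly loose; neither you nor the paper supplies an argument for that direction, and the Kl\"uners example only demonstrates existence. If you want the biconditional to stand as stated, you would need to argue that in this particular action any element of the product not in the diagonal image genuinely merges two diagonal orbits---or else weaken the claim to the direction you actually prove.
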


This proposition follows immediately from the definitions of $a(T)$, $a(G)$, $b(K,T(\pi))$, and $b(K,G)$, and shows that the Malle-Bhargava principle is not consistent with partitioning by subfields. One might take this to mean that our application of the Malle-Bhargava principle to crossed homomorphisms is the cause of the inconsistency, but Kl\"uners' counter example shows that this is not the case. There are groups $G$ for which the predicted invariant $b(K,G)$ in the original Malle-Bhargava principle is wrong, but $b(K,T(\pi))$ is correct for an appropriate choice of subgroup $T\normal G$ and homomorphism $\pi\in \Hom(G_K,G)$.

The refined counting problem is seeing a possible overlap between conjugation (the action of $\pi$) and the cyclotomic action (the action of $\chi$), which Malle's original prediction treats as being independent of each other. In this way, the refined counting problem sheds light on the known inconsistencies of Malle's conjecture.

\textbf{Example:} Consider Kl\"uners' counter example $G=C_3\wr C_2\subset S_6$ and the abelian normal subgroup $T=C_3^2\normal G$. Any surjective homomorphism $\pi:G_K\rightarrow C_2$ is \emph{itself} a solution to the embedding problem because $G$ is split, so it follows from Corollary \ref{cor:alpha} that
\[
N(L/\Q,C_3^2\normal C_3\wr C_2;X) \sim c'(K,T(\pi)) X^{1/a(T)}(\log X)^{b(K,T(\pi))-1}.
\]
The group $T$ is the subgroup of permutations in $S_6$ generated by $(1\ 2\ 3)$ and $(4\ 5\ 6)$. This implies $a(T) = 2$.

The set of elements of $T$ with minimal index is exactly
\[
A(T)=\{(1\ 2\ 3),(1\ 2\ 3)^2,(4\ 5\ 6),(4\ 5\ 6)^2\}.
\]
We have two cases for computing $b(\Q,T(\pi))$:
\begin{enumerate}
\item{If $L\ne \Q(\zeta_3)$, then the composite map $\pi*\chi^{-1}:G_\Q\rightarrow \textnormal{Sym}(A(T))$ acts transitively on $A(T)$, as we can find $\sigma$ such that $\pi(\sigma)\ne 1$ and $\chi(\sigma)=1$ so that $\sigma.(1\ 2\ 3) = (4\ 5\ 6)$, and vice versa with $\pi(\sigma)=1$ and $\chi(\sigma)\ne 1$ so that $\sigma.(1\ 2\ 3) = (1\ 2\ 3)^2$. This implies there is a single orbit.}
\item{If $L=\Q(\zeta_3)$, the opposite is true and $\pi(\sigma) = 1$ if and only if $\chi(\sigma)=1$. This implies $\sigma.(1\ 2\ 3)$ is either $(1\ 2\ 3)$ or $(4\ 5\ 6)^2$, which partitions $A(T)$ into two orbits.}
\end{enumerate}
Therefore
\[
b(\Q,T(\pi)) = \begin{cases}
2 & L=\Q(\zeta_3)\\
1 & L\ne \Q(\zeta_3).
\end{cases}
\]
On the other hand, $A(G)=A(T)$ and $a(G)=a(T)$ is made up of two conjugacy classes $\{(1\ 2\ 3), (4\ 5\ 6)\}$ and $\{(1\ 2\ 3)^2, (4\ 5\ 6)^2\}$, which are swapped by the cyclotomic action. This implies
\[
b(\Q,G) = 1.
\]
The Malle-Bhargava principle then predicts
\begin{align*}
c' X^{1/2}\log X \sim N(\Q(\zeta_3)/\Q,C_3^2\normal C_3\wr C_2;X) \le N(\Q,C_3\wr C_2;X) \sim c X^{1/2}\,,
\end{align*}
which is a clear contradiction.

Step 2 as described in the introduction can be made to work in this case, and shows that $N(\Q,G;X) \sim c X^{1/2}\log X$ is the correct value (see \cite{lemke-oliver-jwang-wood2019} for a proof of uniformity in this case). The original power predicted by Malle was $0$, but Kl\"uners showed it must be at least $1$ and T\"urkelli's modification supports that it should be exactly $1$. Subject to uniformity, the study of $(T\normal G)$-towers suggests the same asymptotic as T\"urkelli.

We can generalize T\"urkelli's modification to $(T\normal G)$-towers as follows:

\begin{definition}
We define T\"urkelli's modified invariant for $T\normal G$ and $\pi:G_K\twoheadrightarrow G$ to be
\[
B(K,T(\pi)) = \max_{\substack{N\normal G\\N\normal T\\a(N)=a(T)} }\max_{\substack{\varphi:G_K\rightarrow G\\ \varphi \equiv \pi \mod N} }b(K,N(\varphi))\,.
\]
\end{definition}

T\"urkelli's definition included the extra condition that $[T,T]\le N$. This is not necessary to state, as any $N$ for which this is not true will yield a smaller invariant than $N[T,T]$.

\begin{lemma}
Suppose $T\normal G$, $N\normal G$ with $a(N)=a(T)$ and $N\subset T$, and $\pi:G_K\twoheadrightarrow G$. Then for any $\varphi\equiv \pi\mod N$
\[
b(K,N(\varphi))\le b(K,N(T\cap [G,G])(\varphi))\,.
\]
\end{lemma}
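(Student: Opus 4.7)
The plan is to recognize the inequality as a special case of the general fact that orbit counts are monotone under inclusion of invariant subsets. Set $M := N(T \cap [G,G])$. First, I would verify basic properties of $M$: it is a product of two normal subgroups of $G$, hence normal in $G$; and it sits in the chain $N \subset M \subset T$ since $N \subset T$ and $T \cap [G,G] \subset T$. The $a$-invariant is monotone decreasing under group inclusion (larger groups give more elements to minimize over), so from this chain together with the hypothesis $a(N) = a(T)$ I conclude $a(T) \le a(M) \le a(N) = a(T)$, forcing $a(M) = a(N) = a(T)$. Writing $A(H) := \{h \in H : \ind(h) = a(H)\}$, this gives the set-theoretic containment $A(N) \subset A(M) \subset A(T)$.

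Next, I would transfer this containment to the level of conjugacy classes. Because $N \normal G$, the $G$-conjugacy class of any $n \in N$ is already contained in $N$, so it coincides with the $G$-conjugacy class of $n$ viewed as an element of $M$. Hence $A(N) \subset A(M)$ induces a genuine inclusion of sets
\[
\{\text{conjugacy classes } C \subset A(N)\} \subset \{\text{conjugacy classes } C \subset A(M)\}.
\]
Moreover, the twisted action $\sigma.g = \varphi(\sigma)\, g^{\chi(\sigma)^{-1}}\, \varphi(\sigma)^{-1}$ preserves both $N$ and $M$: conjugation by $\varphi(\sigma) \in G$ preserves each since they are normal in $G$, and exponentiation preserves any subgroup. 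Consequently the induced action of $G_K$ on $G$-conjugacy classes in $A(M)$ restricts to an action on $G$-conjugacy classes in $A(N)$.

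Finally, I would invoke the elementary observation that if a group acts on a set $S$ and stabilizes a subset $S' \subset S$, then every orbit contained in $S'$ is also an orbit in $S$, so $|S'/\text{action}| \le |S/\text{action}|$. Applied to the sets of conjugacy classes above under the action of $\varphi*\chi^{-1}$, this immediately gives
\[
b(K, N(\varphi)) \le b(K, M(\varphi)) = b(K, N(T\cap[G,G])(\varphi)),
\]
which is the desired inequality. There is no substantive analytic or cohomological obstacle; the only step requiring real care is keeping ``conjugacy class'' consistently as ``$G$-conjugacy class'' throughout, so that the containment $A(N) \subset A(M)$ transfers verbatim to the conjugacy-class level and the restriction of the action is unambiguous.
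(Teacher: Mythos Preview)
Your argument has a real gap stemming from the meaning of ``conjugacy class'' in the definition of $b(K,N(\varphi))$. In this paper the classes are taken \emph{inside the module itself}: $b(K,N(\varphi))$ counts $\varphi*\chi^{-1}$-orbits on $N$-conjugacy classes in $A(N)$, and $b(K,M(\varphi))$ counts orbits on $M$-conjugacy classes in $A(M)$ (this is explicit in the proof of Theorem~\ref{thm:MBlocal}, where the factor $|C_T(t)|/|T|$ appears, and again in the paper's own proof of the present lemma, which opens with ``Suppose $C_1,C_2\subset A(N)$ are conjugacy classes in $N$\ldots''). With that reading your key step breaks: an $N$-conjugacy class $C\subset A(N)$ need not be an $M$-conjugacy class, because conjugation by elements of $T\cap[G,G]$ can fuse several $N$-classes into a single $M$-class. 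So there is no literal inclusion of sets of conjugacy classes, and the ``orbit counts are monotone under invariant subsets'' principle does not apply. If you force everything to be $G$-conjugacy classes as you propose, you only prove
\[
\#\bigl(\{G\text{-classes in }A(N)\}/\varphi*\chi^{-1}\bigr)\ \le\ \#\bigl(\{G\text{-classes in }A(M)\}/\varphi*\chi^{-1}\bigr),
\]
but since $N$-classes refine $G$-classes you merely know $b(K,N(\varphi))\ge \#(\{G\text{-classes in }A(N)\}/\sim)$, and the chain of inequalities does not close.

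The paper fills exactly this gap by showing the fusion is harmless: since the cyclotomic character $\chi$ is trivial on $[G_K,G_K]$, one has $(\varphi*\chi^{-1})|_{[G_K,G_K]}=\varphi|_{[G_K,G_K]}$, and surjectivity of $\pi$ together with $\varphi\equiv\pi\bmod N$ yields $N(T\cap[G,G])/N = N\bigl(T\cap(\varphi*\chi^{-1})([G_K,G_K])\bigr)/N$. Hence conjugation by any element of $T\cap[G,G]$ on $N$-classes is already realized by the $\varphi*\chi^{-1}$ action, so two $N$-classes lying in the same $M$-class are already in the same orbit. That is precisely what makes the induced map on orbit sets injective. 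Note this uses the specific shape $M=N(T\cap[G,G])$ in an essential way; your argument never invokes the commutator and would read identically for any intermediate normal subgroup between $N$ and $T$, which is a sign that a genuine idea is missing.
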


\begin{proof}
The cyclotomic character factors through $G_K^{ab}$, which implies $\varphi*\chi^{-1}|_{[G_K,G_K]}=\varphi|_{[G_K,G_K]}$. $a(N)=a(T)$ implies that $A(N)=A(T) \cap N$, and similarly $A(N(T\cap [G,G]))=A(T)\cap N(T\cap[G,G])$.

Suppose $C_1,C_2\subset A(N)$ are conjugacy classes in $N$ such that there exists some $x\in N(T\cap[G,G])$ with $c_x(C_1)=C_2$. The action factors through $G/N$ as $N$ acts trivially on $N$-conjugacy classes, so this implies there exists some $x\in N(T\cap [G,G])/N$ with $c_x(C_1)=C_2$. Surjectivity of $\pi$ and $\varphi\equiv \pi \mod N$ implies that
\begin{align*}
N(T\cap [G,G])/N &= N(T\cap \pi([G_K,G_K]))/N\\
&= N(T\cap \varphi([G_K,G_K]))/N\\
&= N(T\cap (\varphi*\chi^{-1})([G_K,G_K]))/N\,.
\end{align*}
Thus we have shown that the conjugacy class $\widetilde{C_1}\subset A(N(T\cap [G,G]))$ containing $C_1\subset A(N)$ is necessarily contained in the union
\[
\bigcup_{\sigma\in G_K} c_{\varphi(\sigma)}(C_1^{\chi(\sigma)^{-1}})\,.
\]
This proves that the map
\[
\{\text{conjugacy class } C\subset A(N)\}/\varphi*\chi^{-1} \rightarrow \{\text{conjugacy class }\widetilde{C}\subset A(N(T\cap[G,G]))\}/\varphi*\chi^{-1}
\]
is a well-defined inclusion, which concludes the proof.
\end{proof}

We provide the statement for a generalized form of T\"urkelli's modification for $(T\normal G)$-towers here:

\begin{conjecture}[T\"urkelli's modification for $(T\normal G)$-towers]
Let $G\subset S_n$ be transitive, $T\normal G$, $(L/K,\iota_B)$ a $B$-extension and $\pi:G_K\rightarrow G$ a homomorphism with $\pi\equiv \iota_B \mod T$. Then
\[
N(L/K,T\normal G;X) \sim c'(K,T(\pi)) X^{1/a(T)} (\log X)^{B(K,T(\pi))-1}\,.
\]
\end{conjecture}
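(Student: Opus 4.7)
The plan is to reduce the conjecture to two more tractable statements: the computation $B(K, T(\pi)) = b(K, T(\pi))$ when $T$ is abelian, and a sharpening of Corollary \ref{cor:alpha} from order-of-growth ($\asymp$) to asymptotic equivalence ($\sim$). For the identification, fix $N \normal G$ with $N \subseteq T$ and $a(N) = a(T)$, and any lift $\varphi: G_K \to G$ with $\varphi \equiv \pi \pmod{N}$. Since $N$ is abelian, $N \subseteq C_G(N)$, and $\varphi \equiv \pi \pmod{N}$ implies that $\varphi(\sigma)$ and $\pi(\sigma)$ have the same image in $G/C_G(N)$, hence the same conjugation action on $N$. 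Thus $b(K, N(\varphi))$ depends only on $N$, and it counts the $\pi*\chi^{-1}$-orbits on $A(N) = A(T) \cap N$. Each such orbit is a full orbit on $A(T)$ lying inside $N$ (since $N \normal G$ is stable under the action), so $b(K, N(\varphi)) \leq b(K, T(\pi))$ with equality at $N = T$, giving $B(K, T(\pi)) = b(K, T(\pi))$ in the abelian case.

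With this identification, the conjecture for abelian $T$ reduces to sharpening Corollary \ref{cor:alpha} to a true asymptotic. I would proceed in two steps. First, sharpen Theorem \ref{thm:MalleZ1} to an equivalence $|Z^1(K, T(\pi); X)| \sim c X^{1/a(T)} (\log X)^{b(K, T(\pi))-1}$ with an explicit leading constant $c > 0$. The proof in Section \ref{sec:AsymptoticWiles} decomposes $H^1(K, T(\pi))$ as a union of generalized Selmer groups indexed by local conditions; to obtain the main term on the nose, one must show that the residues of the corresponding Euler products at $s = 1/a(T)$ combine coherently rather than cancelling, and then apply Delange's theorem (Theorem \ref{thm:delange}) via Corollary \ref{cor:maintauberian} to extract the leading constant. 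Second, perform a M\"obius inversion over proper subgroups $G_0 \leq G$ containing $\pi(G_K) T$ to sieve from all crossed homomorphisms down to the surjective ones. Each proper $G_0$ gives a strictly smaller $a$-invariant or $b$-invariant, hence a subleading contribution, so the main term is preserved.

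The main obstacle is the first step. The methods of Section \ref{sec:AsymptoticWiles} bound the counting function between constant multiples of $X^{1/a(T)}(\log X)^{b(K, T(\pi))-1}$ but do not pin down the leading coefficient, because the Wiles-style decomposition involves an infinite family of Euler products over varying local conditions. Controlling their joint contribution at $s = 1/a(T)$ is the analytic heart of the problem; a resolution likely requires a Tauberian framework that handles families of Dirichlet series uniformly, perhaps extending Proposition \ref{prop:GeneralDirichlet} and Corollary \ref{cor:maintauberian}.

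For non-abelian $T$, where the maximization defining $B(K, T(\pi))$ becomes genuinely operative, the approach must be structurally different. I would stratify $(T \normal G)$-towers $F/L/K$ by the fixed field $M = F^N$ of each admissible $N$, yielding
\[
N(L/K, T \normal G; X) = \sum_{M} N(M/K, N \normal G; X)
\]
over $(T/N \normal G/N)$-towers $M/L/K$ compatible with $\varphi$. The stratum $(N, \varphi)$ realizing the maximum in $B(K, T(\pi))$ would then dictate the main term. The obstacles here are the ``Step 2'' uniformity in the intermediate field $M$ (flagged in the introduction) and the absence of an analogue of Theorem \ref{thm:MalleZ1} for non-abelian $N$, both of which lie beyond the current paper.
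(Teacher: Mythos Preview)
The statement is a \emph{conjecture} in the paper, not a theorem; the paper does not prove it and does not claim to. So there is no paper proof to compare against. What the paper does supply is the identification $B(K,T(\pi))=b(K,T(\pi))$ whenever $A(T)\subset Z(T)$ (in particular whenever $T$ is abelian), which is exactly your first reduction. Your argument for this---that $N\le T$ abelian forces $T\le C_G(N)$, so $\varphi\equiv\pi\pmod N$ already pins down the conjugation action on $A(N)$, and that $A(N)=A(T)\cap N$ is a union of full $\pi*\chi^{-1}$-orbits---is correct and matches the paper's proof of that Proposition.

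Your assessment of what remains is also accurate: for abelian $T$ the conjecture is equivalent to upgrading Corollary~\ref{cor:alpha} from $\asymp$ to $\sim$, and the paper explicitly flags this as an artifact of the method (the $\pi$-discriminant fails condition~(c) of Theorem~\ref{thm:asymptoticWiles} at the wildly ramified and $\pi$-ramified places, forcing the sandwich $\disc_\pi^\downarrow\le\disc_\pi\le\disc_\pi^\uparrow$). The paper remarks after Theorem~\ref{thm:ratio} that one expects $\lim_{X\to\infty}F(X)=1$ in all cases but cannot prove it here, so your second step (the sieve to surjective maps) is also blocked by the same obstruction, not just the first.

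One small correction: your M\"obius sieve is phrased over ``proper subgroups $G_0\le G$ containing $\pi(G_K)T$,'' but $\pi(G_K)T=G$ by hypothesis, so there are no such proper $G_0$. You mean proper $H<G$ with $HT=G$; this is the stratification the paper uses in the proof of Theorem~\ref{thm:ratio}(iii), and the claim that each such $H$ yields a strictly smaller $b$-invariant is exactly what is proved there under condition~(iii) but \emph{not} under the discriminant ordering. So the sieve does not automatically preserve the leading constant for $\disc_\pi$, and this is a second genuine obstacle, not merely a formality.

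For non-abelian $T$ your stratification by $F^N$ is the natural thing to try, and you correctly identify the two missing ingredients (uniformity in the intermediate field and an analogue of Theorem~\ref{thm:MalleZ1} for non-abelian coefficients) as lying beyond the paper.
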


T\"urkelli's modification is a brute force fix for the issues arising in Proposition \ref{prop:differentpi} It is the maximum over all the possible $b(K,T(\pi))$ that could provide a counter example as in Proposition \ref{prop:differentpi}, which removes the inconsistency.

All of the cases given in the introduction for which Malle's conjecture proper has been proven satisfy $b(K,G) = B(K,G)$ (although the upcoming preprint \cite{lemke-oliver-jwang-wood2019} will include the proofs of at least one case with $b(K,G)<B(K,G)$). Corollary \ref{cor:alpha} proves the generalized Malle's conjecture for towers with $b(K,T(\pi))$ for $T$ abelian, which would support T\"urkelli's modification if we can show $b(K,T(\pi))=B(K,T(\pi))$ for $T$ abelian.

\begin{proposition}
Suppose $T\normal G$ and $\pi:G_K\twoheadrightarrow G$. Then the following are true:
\begin{enumerate}
\item[(i)]{
If $A(T)\subset Z(T)$ is contained in the center of $T$ then $b(K,T(\pi))=B(K,T(\pi))$.
}

\item[(ii)]{
If $T\le [G,G]$ then $b(K,T(\pi)) = B(K,T(\pi))$.
}
\end{enumerate}
\end{proposition}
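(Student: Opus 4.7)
The plan is to prove only the nontrivial direction $B(K,T(\pi))\le b(K,T(\pi))$; the reverse inequality is immediate from the definition of $B$ by choosing $N=T$ and $\varphi=\pi$. So I fix an admissible pair $(N,\varphi)$ with $N\normal G$, $N\normal T$, $a(N)=a(T)$, and $\varphi:G_K\to G$ satisfying $\varphi\equiv\pi\pmod{N}$, and I aim to show $b(K,N(\varphi))\le b(K,T(\pi))$.

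The first step is a reduction valid in both cases, showing that $b(K,N(\varphi))$ depends only on $\pi$, not on the particular lift $\varphi$. Writing $\varphi(\sigma)=n(\sigma)\pi(\sigma)$ with $n(\sigma)\in N$, for any $N$-conjugacy class $C\subset N$ one has
\[
\varphi(\sigma)\,C^{\chi(\sigma)^{-1}}\varphi(\sigma)^{-1}=n(\sigma)\bigl(\pi(\sigma)\,C^{\chi(\sigma)^{-1}}\pi(\sigma)^{-1}\bigr)n(\sigma)^{-1},
\]
and conjugation by $n(\sigma)\in N$ preserves any $N$-conjugacy class of elements of $N$. Hence the action of $\varphi*\chi^{-1}$ on the set of $N$-conjugacy classes in $A(N)$ agrees with that of $\pi*\chi^{-1}$, and so
\[
b(K,N(\varphi))=\#\bigl(\{N\text{-conjugacy classes }C\subset A(N)\}/\pi*\chi^{-1}\bigr).
\]
The hypothesis $a(N)=a(T)$ forces $A(N)=A(T)\cap N$, and $A(N)\subset A(T)$ is stable under $\pi*\chi^{-1}$ because $N\normal G$ and $\chi(\sigma)$ is coprime to $|N|$.

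For part (i), I would invoke $A(T)\subset Z(T)$ to conclude $A(N)\subset Z(T)\cap N\subset Z(N)$, so every $T$-conjugacy class in $A(T)$ and every $N$-conjugacy class in $A(N)$ is a singleton. Then $b(K,T(\pi))$ and $b(K,N(\varphi))$ both count orbits of $\pi*\chi^{-1}$ on the underlying element sets $A(T)$ and $A(N)$ respectively, and the inclusion of invariant subsets $A(N)\hookrightarrow A(T)$ gives the inequality. For part (ii), I would use surjectivity of $\pi$ together with $T\le [G,G]$ to write $N\subset T\subset [G,G]=\pi([G_K,G_K])$. For any $t\in T$, pick $\sigma\in[G_K,G_K]$ with $\pi(\sigma)=t$; since $\chi$ is trivial on commutators, $\chi(\sigma)=1$, and the $\sigma$-action on $T$ is $x\mapsto txt^{-1}$. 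Thus conjugation by $T$ (and hence by $N\le T$) is already contained in the action of $\pi*\chi^{-1}$, so on each of $A(T)$ and $A(N)$ the orbits on conjugacy classes coincide with the orbits on elements, and the inclusion $A(N)\subset A(T)$ again yields the inequality.

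The main obstacle is bookkeeping: keeping $N$-conjugacy distinct from $T$-conjugacy, confirming that $A(N)=A(T)\cap N$ is $\pi*\chi^{-1}$-invariant, and verifying that the reduction from $\varphi*\chi^{-1}$ to $\pi*\chi^{-1}$ on the level of $N$-conjugacy classes really is independent of $\varphi$. Once those are in place both parts collapse to the elementary fact that the number of orbits on an invariant subset is bounded above by the number of orbits on the ambient set.
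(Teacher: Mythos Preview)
Your argument is correct. Your general reduction showing $b(K,N(\varphi))=b(K,N(\pi))$ via the observation that $n(\sigma)\in N$ fixes every $N$-conjugacy class is valid and slightly sharper than what the paper does: for part (i) the paper instead notes that conjugation on $A(N)\subset Z(T)$ factors through $G/T$, so $\varphi\equiv\pi\bmod T$ already forces the same action---an argument specific to the hypothesis of (i). For part (ii) the paper takes a different route: it invokes the preceding lemma $b(K,N(\varphi))\le b(K,N(T\cap[G,G])(\varphi))$, which under $T\le[G,G]$ replaces $N$ directly by $T$, and then uses $\varphi\equiv\pi\bmod T$ to conclude $b(K,T(\varphi))=b(K,T(\pi))$. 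Your argument bypasses that lemma entirely by showing that $T$-conjugation (hence $N$-conjugation) is already realized inside the $\pi*\chi^{-1}$-action on elements via $\sigma\in[G_K,G_K]$ with $\chi(\sigma)=1$, reducing both $b$-invariants to element orbits and then comparing via the invariant inclusion $A(N)\subset A(T)$. The paper's route is shorter once the lemma is in hand; yours is self-contained and makes the role of the hypothesis $T\le[G,G]$ more transparent.
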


This proposition shows that Corollary \ref{cor:alpha} verifies T\"urkelli's modification in the case that $T$ is abelian, as $Z(T)=T$ in that case. We also include a large family of other cases for which T\"urkelli's modification does not changes the log term, which highlights the fact that T\"urkelli's modification is really about issues arising from abelian extensions and roots of unity. This remark gives evidence suggesting that Malle's original conjecture should hold for groups with trivial abelianization, in particular for all nonabelian simple groups.

\begin{proof}
For part (i), every $N\le T$ with $a(N)=a(T)$ satisfies $A(N)\subset A(T) \subset Z(T)$ which implies $T$ acts on $A(T)$ trivially by conjugation. Thus
\begin{align*}
B(K,T(\pi)) &= \max_{\substack{N\normal G\\N\normal T\\a(N)=a(T)} }\max_{\substack{\varphi:G_K\rightarrow G\\ \varphi \equiv \pi \mod N} }b(K,N(\varphi))\\
&= \max_{\substack{N\normal G\\N\normal T\\a(N)=a(T)} }\max_{\substack{\varphi:G_K\rightarrow G\\ \varphi \equiv \pi \mod N} }\#\left(\{\text{conjugacy class } C\subset A(N)\}/\varphi*\chi^{-1}\right)\,.
\end{align*}
All conjugacy classes in $A(N)\subset Z(T)$ are trivial, so this really only considers orbits of elements.
\begin{align*}
B(K,T(\pi)) &= \max_{\substack{N\normal G\\N\normal T\\a(N)=a(T)} } \max_{\substack{\varphi:G_K\rightarrow G\\ \varphi \equiv \pi \mod N} }\#\left(A(N)/\varphi*\chi^{-1}\right)\,.
\end{align*}
The action by conjugation of $G$ on $A(N)$ factors through $G/T$ because $A(N)\subset Z(T)$. Therefore $\varphi\equiv \pi\mod N$ implies $\varphi\equiv \pi\mod T$ implies $\varphi$ and $\pi$ induce the same action on $A(N)$. This implies
\begin{align*}
B(K,T(\pi)) &= \max_{\substack{N\normal G\\N\normal T\\a(N)=a(T)} } \#\left(A(N)/\pi*\chi^{-1}\right)\\
&=\#\left(A(T)/\pi*\chi^{-1}\right)\\
&=b(K,T(\pi))\,.
\end{align*}

Part (ii) follows from the bound
\begin{align*}
b(K,N(\varphi)) &\le b(K,N(T\cap [G,G])(\varphi))\\
&= b(K,T(\varphi))\,,
\end{align*}
so that
\begin{align*}
B(K,T(\pi)) &=  \max_{\substack{N\normal G\\N\normal T\\a(N)=a(T)} }\max_{\substack{\varphi:G_K\rightarrow G\\ \varphi \equiv \pi \mod N} }b(K,T(\varphi))\,.
\end{align*}
We remark that $\varphi\equiv \pi\mod N$ in particular implies that $\varphi \equiv \pi\mod T$, which implies that they act on $\{\text{conjugacy class }C\subset A(T)\}$ in the same way. Therefore
\begin{align*}
B(K,T(\pi)) &=  \max_{\substack{N\normal G\\N\normal T\\a(N)=a(T)} }b(K,T(\pi))\\
&= b(K,T(\pi))\,.
\end{align*}
\end{proof}

\newpage

\section{Asymptotic Wiles' Theorem}\label{sec:AsymptoticWiles}

\subsection{A Review of Wiles' Theorem}

This subsection is dedicated to reviewing the key points of Wiles' Theorem on generalized Selmer groups, which Wiles proves in \cite{wiles1995} as a part of his proof of modularity. Section 2.3 of Darmon-Diamond-Taylor \cite{darmon-diamond-taylor1995} is a useful survey of the results discussed in this section for the reader interested in more details and how Wiles uses this result in his proof of modularity. The reader who is familiar with these results may skip ahead to the next subsection.

We define, as Wiles does, a kind of Selmer group for an arbitrary Galois module:
\begin{definition}\label{def:Selmer}
Fix a Galois module $T$ (i.e. an abelian group with a Galois action), and a family $\mathcal{L}=(L_p)$ of subgroups $L_p\le H^1(K_p,T)$ at all places $p$ of $K$. Define the \textbf{generalized Selmer group} associated to $\mathcal{L}$ to be
\[
H^1_{\mathcal{L}}(K,T) = \left\{ f\in H^1(K,T) \mid \forall p,\res_p(f)\in L_p\right\}.
\]
In other words, this is the preimage of $\prod L_p$ under the restriction map.
\end{definition}

Wiles' key observation was that there is a very close relationship between the Selmer group and the corresponding dual Selmer group under the Tate pairing. Recall that the Tate pairing is a perfect pairing for each place $p$
\[
H^1(K_p,T) \times H^1(K_p,T^*) \rightarrow \mu_{|T|}\,,
\]
where $T^*=\Hom(T,\mu_{|T|})$ is the dual Galois module to $T$. For any subgroup $N\le H^1(K_p,T)$ we can define the Tate dual to $N$ by $N^*=\textnormal{Ann}(N)$ the annihilator of $N$ under the Tate pairing. The dual family to $\mathcal{L}$ is then given by $\mathcal{L}^*=(L_p^*)$, and the dual Selmer group is the Selmer group corresponding to $\mathcal{L}^*$ on the Galois module $T^*$.

Wiles notes that if the Selmer group is ``unramifed" away from finitely many places, then it is finite and he was able to give a formula for the size. Define $H^1_{ur}(K_p,T) := H^1(G_{K_p}/I_p,T^{I_p})$ to be the kernel of the restriction map to $H^1(I_p,T)$, so that we can explicitly say $f\in H^1(K,G)$ is unramified at $p$ if $\res_p(f)\in H^1_{ur}(K_p,T)$.

\begin{theorem}[Wiles' theorem]\label{thm:Wiles}
Let $\mathcal{L}$ be as in Definition \ref{def:Selmer} such that $L_p=H^1_{ur}(K_p,T)$ for all but finitely many places. Then $H^1_{\mathcal{L}}(K,T)$ is finite and
\begin{align*}
\frac{|H^1_{\mathcal{L}}(K,T)|}{|H^1_{\mathcal{L}^*}(K,T^*)|} = \frac{|H^0(K,T)|}{|H^0(K,T^*)|}\prod_{p} \frac{|L_p|}{|H^0(K_p,T)|}.
\end{align*}
\end{theorem}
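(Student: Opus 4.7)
The plan is to prove this via Poitou--Tate duality and the global Euler--Poincaré characteristic formula, following the standard Greenberg--Wiles argument. First I would reduce to a finite situation: choose a finite set $S$ of places of $K$ containing all archimedean places, all places dividing $|T|$, all places where $T$ (equivalently $T^*$) is ramified, and all places where $L_p \neq H^1_{ur}(K_p,T)$. By the unramified-duality fact that $H^1_{ur}(K_p,T)$ and $H^1_{ur}(K_p,T^*)$ are exact annihilators under the local Tate pairing for $p \notin S$, the Selmer group $H^1_{\mathcal{L}}(K,T)$ lies inside $H^1(G_S,T)$ and is cut out by local conditions only at places in $S$, and likewise for $H^1_{\mathcal{L}^*}(K,T^*)$.

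Next I would assemble the basic exact sequence
\[
0 \to H^1_{\mathcal{L}}(K,T) \to H^1(G_S,T) \xrightarrow{\phi} \bigoplus_{v\in S} H^1(K_v,T)/L_v,
\]
and apply the Poitou--Tate nine-term exact sequence. The central content of Poitou--Tate is that, under the perfect local Tate pairings $H^1(K_v,T) \times H^1(K_v,T^*) \to \mu_{|T|}$, the image of the global restriction map $H^1(G_S,T) \to \bigoplus_{v\in S} H^1(K_v,T)$ is the \emph{exact annihilator} of the image of the analogous map for $T^*$. Restricting along the quotient by $L_v$ and its annihilator $L_v^*$, this identifies the cokernel of $\phi$ with the Pontryagin dual of $H^1_{\mathcal{L}^*}(K,T^*)/\ker$, giving an exact sequence that packages into the identity
\[
|H^1_{\mathcal{L}}(K,T)| \cdot \prod_{v\in S}|H^1(K_v,T)/L_v| = |H^1(G_S,T)| \cdot \frac{|H^1_{\mathcal{L}^*}(K,T^*)|}{|H^0(G_S,T^*)|} \cdot (\text{correction from }H^2),
\]
where the $H^0$ and $H^2$ factors appear from the end terms of the nine-term sequence.

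The final step is to apply the Tate global Euler--Poincaré characteristic formula
\[
\frac{|H^0(G_S,T)|\cdot |H^2(G_S,T)|}{|H^1(G_S,T)|} = \prod_{v\mid \infty}\frac{|H^0(K_v,T)|}{|T|^{[K_v:\R]}},
\]
together with the local Euler characteristic $|H^0(K_v,T)|\cdot|H^2(K_v,T)| / |H^1(K_v,T)| = |T|_v^{-1}$ at non-archimedean places, and global duality $|H^2(G_S,T)| = |H^0(G_S,T^*)|$. Substituting these into the identity from the previous step and observing that the $|T|_v$ factors over all $v \in S$ telescope with the archimedean contribution leaves the clean ratio claimed. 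The main obstacle, as usual with this kind of bookkeeping, will be carefully tracking the local factors at the archimedean places and at places of bad reduction so that the cancellation is transparent; once the nine-term sequence is written out and the Euler characteristics inserted, the formula falls out.
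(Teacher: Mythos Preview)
The paper does not actually prove this theorem; it is quoted as a known result due to Wiles \cite{wiles1995}, with \cite{darmon-diamond-taylor1995} given as a reference for details, and the only remark on the proof is that it proceeds ``via a clever use of the nine term Poitou--Tate exact sequence.'' Your outline is exactly that standard Greenberg--Wiles argument, so your approach matches what the paper alludes to and is correct in spirit; there is nothing further to compare.
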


Wiles made use of this theorem in special cases where $K=\Q$ and the dual Selmer group was in fact trivial in order to get good sizes for the Selmer group, but the proof in general is the same via a clever use of the nine term Poitou-Tate exact sequence. This theorem is, in a certain sense, a ``local-to-global" theorem. It expresses the global quantity $|H^1_{\mathcal{L}}(K,T)|$ as (almost) a product of local densities $|L_p|/|H^0(K_p,T)|$. Galois cohomology of local fields is very well understood, so a local-to-global theorem of this kind allows us to take that information and prove new things about Galois cohomology of global fields.

What about the pieces that are not local densities? $H^0(K,T)$ and $H^0(K,T^*)$ are constants independent of $\mathcal{L}$, so they essentially do not matter. The one confounding factor comes from the dual Selmer group. We will prove an asymptotic version of Wiles' theorem for $\mathcal{L}$ that allows for ramification at infinitely many places, and we will give an argument that the dual Selmer group is not ``too bad" for large families $\mathcal{L}$ and the behavior is dominated by the product of local densities.

Here we list some useful facts about the Tate pairing and Galois cohomology that are important to Wiles' theorem and that we will utilize in this section:
\begin{itemize}
\item{If $F$ is the field fixed by the action $G_K\rightarrow \Aut(T)$ and $F^*$ is the field fixed by $G_K\rightarrow \Aut(T^*)$, then $FF^*=F(\mu_{|T|})$. This is what causes the cyclotomic character to show up in the number field counting problem, and explains the assumptions for $F$ in Lemma \ref{lem:KC}.}

\item{If $p\nmid \infty$, then $|H^1_{ur}(K_p,T)|=|H^0(K_p,T)|$. This implies that the product in Wiles' theorem is really a finite product, as all but finitely many places are unramified.}

\item{If $p\nmid |T|\infty$ then $H^1_{ur}(K_p,T)^* = H^1_{ur}(K_p,T^*)$. This tells us that, away from wild or infinite places, unramified means the same thing in the Selmer group and its dual. This will be important for controlling the size of the dual Selmer group.}

\item{$0^*=H^1(K_p,T^*)$ and $H^1(K_p,T)^*=0$. This follows from the Tate pairing being a perfect pairing.}

\item{If $N_1\le N_2 \le H^1(K_p,T)$ then $N_2^*\le N_1^* \le H^1(K_p,T^*)$. This follows from a manipulation of annihilators of pairings.}
\end{itemize}
These facts can be found in either Darmon-Diamond-Taylor \cite{darmon-diamond-taylor1995} or any text on local Galois cohomology.

\textbf{What direction will we be going in?} $H^1(K,T)$ is technically a Selmer group, just where $L_p=H^1(K_p,T)$ for all places $p$. However, it does not satisfy the hypothesis that all but finitely many places are unramified and in general it is not finite. We will provide a partial answer to the following general question:

\begin{question}
What is the ``size" of $H^1_{\mathcal{L}}(K,T)$? In particular, if $H^1_{\mathcal{L}}(K,T)$ is infinite how is it distributed?
\end{question}

We will do that by ordering the 1-coclasses with some kind of discriminant-like invariant, and counting asymptotically how large the set of 1-coclasses in $H^1_{\mathcal{L}}(K,T)$ with discriminant $<X$ is as we take $X\rightarrow \infty$.

\subsection{Frobenian local conditions and discriminants}\label{subsec:frobLdisc}

Specifying $\mathcal{L}$ is equivalent to specifying infinitely many local conditions for 1-coclasses to satisfy. Specifying infinitely many local conditions does not always yield something ``nice" to count.  A good example of this would be to try and specify the splitting behavior of all places in an extension $L/K$. It's difficult to say whether there are any fields that satisfy a prescribed splitting behavior at each place, and in many cases there will not be. In fact, there are uncountably many ways to specify a splitting type for all places, but only countably many extensions $L/K$!

When specifying local conditions, typically the ones that are ``nice" to deal with are the ones that are distributed like the splitting behavior of places in a finite extension $L/K$. For the purposes of this paper, we really want ``nice" to mean that the corresponding Dirichlet series has a meromorphic continuation with an easily described rightmost pole. Up to taking a branch cut, we can do this when the local behavior is distributed like the splitting type of places in a finite extension $L/K$.

This was the subject of Section \ref{sec:analytic}, where we developed the necessary analytic tools in exactly the case that the local factors are Frobenian. Proposition \ref{prop:GeneralDirichlet} suggests that we consider the following definition for a ``nice" family of local conditions:

\begin{definition}
Call $\mathcal{L}$ \textbf{Frobenian in $\mathbf{F/K}$} for a finite extension $F/K$ and a finite set of places $S$ if it satisfies the following properties:
\begin{itemize}
\item[(a)]{$F$ contains the fields of definition for $T$ and $T^*$.}

\item[(b)]{$S$ contains all ramified places in $F$ and all places $p\mid |T|\infty$.}

\item[(c)]{For all $\sigma\in \Gal(F/K)$, there exists a subgroup $L_\sigma\le H^1(K_\sigma,T)$ (only defined up to conjugation on $\sigma$) such that if $p\not\in S$ with $\Leg{F/K}{p}$ conjugate to $\sigma$ then $L_p\cong L_\sigma$ under the natural isomorphism in Lemma \ref{lem:KC}.}
\end{itemize}
Call any place inside of $S$ an \textbf{irregular place}.
\end{definition}

By construction, Lemma \ref{lem:KC} implies that the local conditions $L_p=H^1(K_p,T)$ are necessarily Frobenian in an extension $F/K$ containing the field of definition of $T$ and $K(\mu_{|T|})$.

When $H_{\mathcal{L}}(K,T)$ is infinite, we describe the ``size" by fixing an admissible ordering by some invariant $\inv:H_{\mathcal{L}}^1(K,T)\rightarrow I_K$ as in \cite[Definition 2.1]{alberts2020} and describing the asymptotic growth of the sets
\[
H^1_{\mathcal{L}}(K,T;X) :=\left\{f\in H^1_{\mathcal{L}}(K,T) : \mathcal{N}_{K/\Q}(\inv(f))<X\right\}.
\]
This is motivated by more classical arithmetic statistics problems, like number field counting as in Malle's conjecture. We restate the definition of an admissible ordering here, extended to allow for nontrivial actions:
\begin{definition}[Definition 2.1 \cite{alberts2020}]
We define an \textbf{admissible ordering} (or \textbf{admissible invariant}) $\inv:\prod_{\pK}H^1(K_p,T) \rightarrow I_K$ as follows:
\begin{enumerate}
\item[(a)]{there is a family of functions $\inv_p:H^1(I_p,T)\rightarrow \Z_{\ge 0}$ for each place $p$ of $K$ such that
\[
\inv(f) = \prod_{p} \pK^{\inv_p(f|_{I_p})}\,,
\]
i.e. $\nu_p(\inv(f))=\inv_p(f)$ is determined by $f|_{I_p}$,
}

\item[(b)]{
for all but finitely many places $p$ of $K$, $f(I_\pK)=1$ if and only if $\inv_p(f)=0$.
}
\end{enumerate}
We define $\inv:H^1(K,T)\rightarrow I_K$ by $\inv(f) = \inv((f|_{G_{K_p}})_{p})$.
\end{definition}
We remark that
\[
H^1_{\mathcal{L}}(K,T;X) = \{f\in H^1_{\mathcal{L}}(K,T) : \mathcal{N}_{K/\Q}(\inv(f))<X\}
\]
is finite by Wiles theorem, as it is contained in the finite Selmer group $H^1_{\mathcal{L}'}(K,T)$ for
\[
L_p' = \begin{cases}
L_p & \mathcal{N}_{K/\Q}(p) < X\text{ or }p\mid \infty\\
H^1_{ur}(K,T) & \mathcal{N}_{K/\Q}(p)\ge X.
\end{cases}
\]
The author in \cite{alberts2020} proves upper bounds for number fields ordered by \emph{any} admissible ordering, but as discussed in that paper we do not generally expect an arbitrary admissible ordering to give a nice asymptotic main term. For a pedantic example, consider the ordering defined by
\[
\inv_p(f|_{I_p}) = \begin{cases}
\mathcal{N}_{K/\Q}(p) & f|_{I_p} \ne 1\\
0 & f|_{I_p} = 1\,.
\end{cases}
\]
The results in \cite{alberts2020} show that $\#\{f : \mathcal{N}_{K/\Q}(\inv(f))<X\} \ll X^{\epsilon}$, which behaves unlike other counting functions that appear in this setting. This is proven by showing that the corresponding Dirichlet series converges absolutely for ${\rm Re}(s)>0$, which implies that we cannot make use of a Tauberian theorem as there is no rightmost pole with positive real part.

In order to prove results about the asymptotic main term, rather than just bounds, we restrict to admissible orderings for which $\nu_p(\inv(f))$ is ``nicely distributed", i.e. so that the Euler product of local terms
\[
\prod_{p}\frac{1}{|H^0(K_p,T)|}\sum_{f\in H^1(K_p,T)} \mathcal{N}_{K/\Q}(p)^{-v_p(\inv(f))s}
\]
has a meromorphic continuation with an easily described rightmost pole.

This are all satisfied by the usual discriminant on $G$-extensions of number fields, where that last condition follows from $\nu_p(\disc(f))= n - \#\{\text{orbits of }f(I_p)\}$ whenever $G\subset S_n$ is a transitive subgroup and $p\nmid |G|\infty$ is tame. Section \ref{sec:analytic} again suggests that the local Euler products do have a nicely described rightmost pole if the local factors are Frobenian.

\begin{definition}
We say an admissible ordering $\inv:H^1(K,T)\rightarrow I_K$ is \textbf{Frobenian in $\mathbf{F/K}$} if there exists a finite set of places $S$ such that
\begin{itemize}
\item[(a)]{$F$ contains the fields of definition of $T$ and $T^*$.}

\item[(b)]{$S$ contains all places ramified in $F$ and all places $p\mid |T|\infty$.}

\item[(c)]{If $p\not \in S$ then $f\in H^1_{ur}(K,T)$ if and only if $v_p(\inv(f))=0$.}

\item[(d)]{For each $\sigma\in \Gal(F/K)$ there exists a map $v_\sigma:H^1(K_\sigma,T)\rightarrow \Z_{\ge 0}$ (only defined up to conjugation on $\sigma$) such that whenever $p\not\in S$ and $\Leg{F/K}{p}$ is conjugate to $\sigma$ then $v_p(\inv(f))=v_\sigma(\res_p(f))$, where we identify $\res_p(f)$ with its image in $H^1(K_\sigma,T)$ under the isomorphism in Lemma \ref{lem:KC}.}
\end{itemize}
Call any place inside of $S$ an \textbf{irregular place}.
\end{definition}

Examples of Frobenian orderings include the product of ramified places map
\[
\ram(f) = \prod_{p: f(I_p)\ne 1} p\,,
\]
and the $\pi$-discriminant. We will prove this for the $\pi$-discriminant in Section \ref{sec:counting}.

\subsection{The statement of the Asymptotic Wiles' Theorem}

We are now ready to state an asymptotic version of Wiles' Theorem in full detail. When we have both $\mathcal{L}$ and an ordering specified, we can talk about the elements of $L_p$ whose ordering is not too large. For an integer $m\ge 0$, define
\[
L_p^{[m]} = \{f_p\in L_p : \nu_p(\inv(f_p))= m\}.
\]
For convenience, we define $L_p^{[\infty]} = L_p\cap H^1_{ur}(K_p,T)$. The asymptotic behavior will be dominated by the minimal value of $m>0$ with $L_p^{[m]}\ne \emptyset$ for infinitely many places $p$. To that effect, we make the following definition:

\begin{definition}
Fix $T$ a finite $K$-module, and suppose $\mathcal{L}$ and $\inv$ are Frobenian in $F/K$. Then define
\begin{align*}
a_{\inv}(\mathcal{L}) =& \min_{\sigma\in \Gal(F/K)} \min_{\substack{f\in L_\sigma\\ f\not\in H^1_{ur}(K_\sigma,T)}} \nu_\sigma(\inv(f))\\
=& \textnormal{ the minimum power of a tamely ramified place }p\not\in S\\
&\textnormal{ that can occur in }\inv(f_p)\textnormal{ for }f_p\in L_p\,,
\end{align*}
where we take the convention that $\min_{n\in \emptyset}n = \infty$. Additionally define
\begin{align*}
b_\inv(\mathcal{L}) =& \frac{1}{[F:K]}\sum_{\sigma\in \Gal(F/K)} \frac{|L_\sigma^{[a_\inv(\mathcal{L})]}|}{|H^0(K_\sigma,T)|}\\
=&\textnormal{ the average size of }\frac{|L_p^{[a_{\inv}(\mathcal{L})]}|}{|H^0(K_p,T)|}\,.
\end{align*}
\end{definition}

The integer $a_\inv(\mathcal{L})$ is the minimal value of $m>0$ such that $L_p^{[m]}\ne \emptyset$ for \emph{infinitely many} places, which should remind the reader of $a(G)$ from Malle's conjecture which can be expressed as the minimum exponent that can occur in the discriminant for infinitely many places.

\begin{theorem}[Asymptotic Wiles Theorem]\label{thm:asymptoticWiles}
Let $T$ be a finite Galois module over $K$ and $\mathcal{L}$ and $\inv$ be Frobenian in $F/K$ satisfying
\begin{itemize}
\item[(a)]{$S$ is the set of irregular places,}

\item[(b)]{For all $\sigma \in \Gal(F/K)$, $H^1_{ur}(K_\sigma,T)\subset L_\sigma$,}

\item[(c)]{For all places $p$, if $f,f'\in H^1(K_p,T)$ such that $\langle f|_{I_p}\rangle = \langle f'|_{I_p}\rangle \le H^1(I_p,T)$ then $\nu_p(\inv(f))=\nu_p(\inv(f'))$.}
\end{itemize}
Then
\begin{align*}
|H^1_{\mathcal{L}}(K,T;X)| \sim c_\inv(\mathcal{L}) X^{1/a_\inv(\mathcal{L})}(\log X)^{b_{\inv}(\mathcal{L})-1},
\end{align*}
for some positive constant $c_\inv(\mathcal{L})$. (Here we take the convention that $1/a_{\inv}(\mathcal{L})=0$ if $a_{\inv}(\mathcal{L})=\infty$.)
\end{theorem}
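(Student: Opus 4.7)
The plan is to analyze the Dirichlet series
\[
F(s) = \sum_{f\in H^1_{\mathcal{L}}(K,T)} \mathcal{N}_{K/\Q}(\inv(f))^{-s},
\]
rewrite it (up to a finite sum) as an Euler product of Frobenian local factors, and conclude via Corollary \ref{cor:maintauberian}. Since $\inv$ is admissible, $\nu_p(\inv(f))$ depends only on $f|_{I_p}$, so the summand already factors formally as $\prod_p \mathcal{N}_{K/\Q}(p)^{-\nu_p(\inv(f))s}$; the obstruction to an honest Euler product is the global compatibility requirement that an idelic tuple $(f|_{G_{K_p}})_p$ comes from a single class in $H^1(K,T)$, and Wiles' Theorem is precisely the tool for quantifying this compatibility.

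First I would stratify $H^1_{\mathcal{L}}(K,T)$ by specifying local restriction data (say, the cyclic subgroup $\langle \res_p(f)\rangle \le H^1(K_p,T)$) at every place, which is unramified at all but finitely many places. Condition (c) of the theorem makes $\nu_p(\inv(f))$ constant on each stratum, so
\[
F(s) = \sum_{(M_p)_p} N(\mathcal{M})\prod_p \mathcal{N}_{K/\Q}(p)^{-\nu_p(M_p)s},
\]
where $N(\mathcal{M})$ is the size of the corresponding stratum, counted (up to an inclusion-exclusion correction) by Wiles' Theorem (Theorem \ref{thm:Wiles}) as
\[
N(\mathcal{M}) = \frac{|H^1_{\mathcal{M}^*}(K,T^*)|\,|H^0(K,T)|}{|H^0(K,T^*)|}\prod_p \frac{|M_p|}{|H^0(K_p,T)|}.
\]
Regrouping place by place, the primal local densities reassemble into an Euler product with factors
\[
Q_p(x) = \sum_{m\ge 0}\frac{|L_p^{[m]}|}{|H^0(K_p,T)|}\,x^m.
\]
Condition (b) of the theorem and the identity $|H^1_{ur}(K_p,T)|=|H^0(K_p,T)|$ at finite places force $Q_p(0)=1$ at every regular place, and Frobenianness of $\mathcal{L}$ and $\inv$ in $F/K$ makes $p\mapsto Q_p$ Frobenian in $F/K$.

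Proposition \ref{prop:GeneralDirichlet} then identifies
\[
\prod_p Q_p(\mathcal{N}_{K/\Q}(p)^{-s}) = \zeta_K(a_\inv(\mathcal{L})s)^{b_\inv(\mathcal{L})}G(s),
\]
with $G$ holomorphic on an open neighborhood of $\mathrm{Re}(s)\ge 1/a_\inv(\mathcal{L})$; the invariants supplied by that proposition agree with $a_\inv(\mathcal{L})$ and $b_\inv(\mathcal{L})$ directly from their definitions. The $Q_p$ have nonnegative real coefficients, so $Q_p(\mathcal{N}_{K/\Q}(p)^{-1/a_\inv(\mathcal{L})})\ne 0$ automatically, and Corollary \ref{cor:maintauberian} delivers the claimed asymptotic $c_\inv(\mathcal{L})X^{1/a_\inv(\mathcal{L})}(\log X)^{b_\inv(\mathcal{L})-1}$; the degenerate case $b_\inv(\mathcal{L})\in \Z_{\le 0}$ is absorbed via the second clause of that corollary.

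The hardest step is controlling the dual Selmer correction $|H^1_{\mathcal{M}^*}(K,T^*)|$ uniformly in $\mathcal{M}$, since these factors vary with the stratum and would otherwise destroy the Euler product structure. The key is that $H^1_{ur}(K_p,T)^* = H^1_{ur}(K_p,T^*)$ at all $p\nmid |T|\infty$: enlarging the primal conditions shrinks the dual ones, so every $\mathcal{M}$ arising in the decomposition has dual Selmer contained in a single fixed finite group. Decomposing $F(s)$ as a finite sum indexed by dual Selmer classes then converts Wiles' identity into a clean Euler product plus a bounded-multiplicity correction, to which the analytic machinery above applies piece by piece with the constants collected into $c_\inv(\mathcal{L})$.
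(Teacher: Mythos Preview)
Your overall strategy matches the paper's almost exactly: stratify by local inertial data, apply Wiles' Theorem to each stratum (with M\"obius inversion), observe that the dual Selmer groups all sit inside the single finite group $H^1_{\mathcal{L}(0)^*}(K,T^*)$, and reassemble into a finite sum of Frobenian Euler products $\sum_h Q(h,s)$ to which Proposition~\ref{prop:GeneralDirichlet} and Corollary~\ref{cor:maintauberian} apply. One small technical point: you stratify by $\langle \res_p(f)\rangle\le H^1(K_p,T)$, but condition~(c) only controls $\nu_p(\inv(f))$ in terms of $\langle f|_{I_p}\rangle\le H^1(I_p,T)$; the paper stratifies by the latter (via the poset $\Lambda(K,T)$), which is what makes the Selmer conditions $L(\lambda)_p=\res_{I_p}^{-1}(\lambda_p)\cap L_p$ honest subgroups compatible with Wiles' Theorem.

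The genuine gap is the positivity of $c_{\inv}(\mathcal{L})$. You write that the dual Selmer pieces give ``a bounded-multiplicity correction, to which the analytic machinery above applies piece by piece with the constants collected into $c_{\inv}(\mathcal{L})$'', but this does not rule out cancellation: the Euler products $Q(h,s)$ for $h\ne 0$ have coefficients $c(h,f_p)$ of both signs, and a priori they could have a pole at $s=1/a_{\inv}(\mathcal{L})$ of the same order $b_{\inv}(\mathcal{L})$ with a negative residue. The paper resolves this in two steps. First (Proposition~\ref{prop:Q}) it shows $|c(h,f_p)|\le 1$, so each $Q(h,s)$ contributes at most the same order, and identifies a criterion under which $c(h,K,T)=0$: namely, if $\res_{I_p}(L_p^{[a]})\not\le \res_{I_p}(\langle\res_p(h)\rangle^*)$ for some $p$. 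Second, to force this criterion for every nonzero $h$, it replaces $T$ by the subgroup $T'=\langle f_p(I_p):f_p\in L_p^{[a]}\rangle$, which has the same $a$- and $b$-invariants; minimality of $T'$ then guarantees that any nonzero $h\in H^1_{\mathcal{L}(T')(0)^*}(K,(T')^*)$ fails the containment at some place, so only the $h=0$ term survives and $c(0,K,T')>0$ gives the required lower bound. Without this step your argument establishes only an upper bound of the claimed shape.
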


We discussed the necessity of choosing Frobenian $\mathcal{L}$ and $\inv$, it will allow us to use Corollary \ref{cor:maintauberian} to convert analytic information at the rightmost pole of a Dirichlet series to asymptotic information. Conditions (b) and (c) are new, and it would be good to address them separately.

Condition (b) prevents us from specifying the splitting type at more than finitely many places. This avoids many issues about the existence of fields with prescribed splitting type at infinitely many places, but it may be more instructive to consider why this works well with our method. Wiles originally related the size of a Selmer group to the size of the corresponding dual Selmer group, but the dual Selmer group is seemingly nowhere to be found in Theorem \ref{thm:asymptoticWiles}. Condition (b) will force the dual Selmer group to be finite, so that $|H^1_{\mathcal{L}^*}(K,T^*)|$ is just a factor of the positive constant $c_\inv(\mathcal{L})$. We briefly prove this here:
\begin{lemma}
If $\mathcal{L}$ is as in Theorem \ref{thm:asymptoticWiles}, then $H^1_{\mathcal{L}^*}(K,T^*)$ is finite.
\end{lemma}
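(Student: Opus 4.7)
The plan is to exhibit $H^1_{\mathcal{L}^*}(K,T^*)$ as a subgroup of a Selmer group to which Wiles' theorem (Theorem \ref{thm:Wiles}) applies directly. The key input is condition (b): for every $\sigma \in \Gal(F/K)$, we have the containment $H^1_{ur}(K_\sigma,T) \subset L_\sigma$. Since $\mathcal{L}$ is Frobenian in $F/K$ with irregular set $S$, this translates under the isomorphism of Lemma \ref{lem:KC} into the statement that $H^1_{ur}(K_p,T) \subset L_p$ for every place $p \notin S$.

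Next, I would pass to Tate duals. Taking annihilators reverses inclusions, so $L_p^* \subset H^1_{ur}(K_p,T)^*$ for every $p \notin S$. By hypothesis $S$ contains all places dividing $|T|\infty$, and I can invoke the standard fact listed in the review of Wiles' theorem that for places $p \nmid |T|\infty$ one has $H^1_{ur}(K_p,T)^* = H^1_{ur}(K_p,T^*)$. Combining, $L_p^* \subset H^1_{ur}(K_p,T^*)$ for every $p \notin S$, i.e.\ the dual family $\mathcal{L}^*$ is unramified outside the finite set $S$.

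Now I would introduce an auxiliary family $\mathcal{M} = (M_p)$ of local conditions on $T^*$ defined by $M_p = H^1(K_p,T^*)$ for $p \in S$ and $M_p = H^1_{ur}(K_p,T^*)$ for $p \notin S$. By construction $L_p^* \subset M_p$ at every place, so $H^1_{\mathcal{L}^*}(K,T^*) \subset H^1_{\mathcal{M}}(K,T^*)$. The family $\mathcal{M}$ satisfies $M_p = H^1_{ur}(K_p,T^*)$ at all but finitely many places, which is precisely the hypothesis of Wiles' theorem for $T^*$. Therefore $H^1_{\mathcal{M}}(K,T^*)$ is finite, and the containment forces $H^1_{\mathcal{L}^*}(K,T^*)$ to be finite as well.

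The argument is essentially a bookkeeping exercise once the three facts above are lined up; the only place that requires any care is ensuring that the identification $H^1_{ur}(K_p,T)^* = H^1_{ur}(K_p,T^*)$ is available at every $p \notin S$, which is handled by the assumption that $S$ contains all places dividing $|T|\infty$. No issues with the dual Selmer group being large can arise, because condition (b) is what forces $\mathcal{L}^*$ to be essentially unramified.
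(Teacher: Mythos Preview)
Your proof is correct and follows essentially the same argument as the paper's: use condition (b) to get $H^1_{ur}(K_p,T)\subset L_p$ for $p\notin S$, dualize to $L_p^*\subset H^1_{ur}(K_p,T^*)$ using that $S$ contains all $p\mid |T|\infty$, and then embed $H^1_{\mathcal{L}^*}(K,T^*)$ into a Selmer group that is unramified outside $S$ and hence finite by Wiles' theorem. The only cosmetic difference is that the paper keeps $L_p^*$ at the places $p\in S$ in its auxiliary family, whereas you enlarge to the full $H^1(K_p,T^*)$ there; either choice works.
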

\begin{proof}
$H^1_{ur}(K,T)\subset L_p$ implies that $L_p^*\subset H^1_{ur}(K_p,T)^*$. Whenever $p\nmid |T|\infty$, $H^1_{ur}(K,T)^*= H^1_{ur}(K,T^*)$ so that
\[
H^1_{\mathcal{L}^*}(K,T^*) \subset H^1_{\mathcal{L}_0^*}(K,T^*),
\]
where
\[
(L_0)_p^* = \begin{cases}
L_p^* & p\in S\\
H^1_{ur}(K,T^*) & p\not\in S.
\end{cases}
\]
$S$ is a finite set, so $\mathcal{L}_0^*$ satisfies the hypotheses of Wiles' Theorem \ref{thm:Wiles}, and so $H^1_{\mathcal{L}_0^*}(K,T^*)$ must be finite.
\end{proof}

Condition (c) is a bit more subtle. One short-coming of Wiles' Theorem is that it can only deal with local conditions $\mathcal{L}=(L_p)$ for the $L_p$ given as \emph{subgroups} of $H^1(K_p,T)$, not arbitrary subsets. Due to this restriction, Wiles' Theorem will only allow us to see the ramification $f|_{I_p}$ ``up to subgroups", i.e. we will only be able to see the cyclic group $\langle f|_{I_p}\rangle$ and not the individual elements. Due to this fact, we want to make sure that the ordering is not separating the individual elements. See Subsection \ref{subsec:finordSelmer} for the place we utilize this property.

\subsection{The proof of the Asymptotic Wiles' Theorem}

We will prove this theorem by considering the Dirichlet series
\begin{align*}
H_{\mathcal{L}}(s)&:=\sum_{f\in H^1_{\mathcal{L}}(K,T)} \mathcal{N}_{K/\Q}(\inv(f))^{-s}\\
&=\sum_{a\in I_K} \#\{f\in H^1_{\mathcal{L}}(K,T): \inv(f)=a\}\mathcal{N}_{K/\Q}(a)^{-s}.
\end{align*}
The proof takes place in three parts. First, we reduce the Dirichlet series to a sum of sizes of finite order Selmer groups via an inclusion-exclusion argument. Second, we apply Wiles' theorem to decompose this series into a finite sum of Euler products. Lastly we apply Corollary \ref{cor:maintauberian} to each Euler product to produce the asymptotic main terms.

\textbf{Remark:} The case $a_{\inv}(\mathcal{L})=\infty$ can only occur if $L_p = H^1_{ur}(K_p,T)$ for all $p\not\in S$. This implies we are in the case of Wiles' original theorem \ref{thm:Wiles}, so that $H^1_{\mathcal{L}}(K,T)$ is necessarily finite. Indeed, by construction $b_{\inv}(K,T)=1$ in this case so that Theorem \ref{thm:asymptoticWiles} gives
\[
|H^1_{\mathcal{L}}(K,T;X)| \sim c_{\inv}(\mathcal{L}) = |H^1_{\mathcal{L}}(K,T)|\,.
\]
Thus, for the remainder of the proof we may assume that $a_{\inv}(\mathcal{L}) < \infty$.

\subsubsection{Reduction to finite order Selmer groups}\label{subsec:finordSelmer}

We recall condition (c) for the admissible ordering:
\begin{itemize}
\item[(c)]{For all places $p$, if $f,f'\in H^1(K_p,T)$ such that $\langle f|_{I_p}\rangle = \langle f'|_{I_p}\rangle \le H^1(I_p,T)$ then $\nu_p(\inv(f))=\nu_p(\inv(f'))$.
}
\end{itemize}
This tells us that the ordering doesn't really see individual elements or the splitting type. We make the following definition to describe what the ordering is really seeing:

\begin{definition}\label{def:Lambda}
Define the poset
\begin{align*}
\Lambda(K,T) &= \left\{\lambda = (\lambda_p)_{p\nmid\infty} :  \substack{\ds \lambda_p\le H^1(I_p,T)\text{ cyclic},\\ \ds\lambda_p=1\text{ for all but finitely many }p}\right\},
\end{align*}
which is ordered by inclusion $\lambda \le \lambda'$ if and only if $\lambda_p\le \lambda'_p$ for all $p$.
\begin{enumerate}
\item[(a)]{
There is a natural map $H^1(K,T)\rightarrow \Lambda(K,T)$ given by $f\mapsto \lambda(f)$ where $\lambda(f)_p=\langle f|_{I_p}\rangle\le H^1(K_p,T)$ for each $p\nmid \infty$.
}
\item[(b)]{
Define $\Lambda(\mathcal{L})$ to be the subposet satisfying the local conditions $\mathcal{L}=(L_p)$, i.e.
\[
\Lambda(\mathcal{L}) = \left\{\lambda\in \Lambda(K,T) : \lambda_p \le \res_{I_p}(L_p) \text{ for each }p\right\}.
\]
Clearly $\Lambda(\mathcal{L})$ is closed under containment, i.e. $\lambda\in \Lambda(\mathcal{L})$ and $\lambda'\le \lambda$ implies $\lambda'\in\Lambda(\mathcal{L})$.
}
\end{enumerate}
\end{definition}

We will now convert the information in $H_{\mathcal{L}}(s)$ and Theorem \ref{thm:asymptoticWiles} from individual elements $f\in H^1_{\mathcal{L}}(K,T)$ to cyclic subgroups $\lambda\in \Lambda(\mathcal{L})$.

\begin{lemma}\label{lem:Lambda}
Let $\Lambda(K,T)$ be as above and suppose $\mathcal{L}$ and $\inv$ satisfy the hypotheses of Theorem \ref{thm:asymptoticWiles}. Then the following hold:
\begin{itemize}
\item[(i)]{The induced ordering $\inv:\Lambda(K,T)\rightarrow I_K$ given by
\begin{align*}
\inv(\lambda) &= \prod_{p} p^{\nu_p(\inv(f_p))}
\end{align*}
where $f_p$ is any choice of generator $\lambda_p = \langle f_p|_{I_p}\rangle$ is well-defined, and satisfies $\inv(\lambda(f))=\inv(f)$,
}

\item[(ii)]{We can rewrite the Dirichlet series $H_{\mathcal{L}}(s)$ as
\[
H_{\mathcal{L}}(s) = \sum_{\lambda\in \Lambda(\mathcal{L})} \#\{f\in H^1_{\mathcal{L}}(K,T) : \lambda(f)=\lambda\} \mathcal{N}_{K/\Q}(\inv(\lambda))^{-s}.
\]
}

\item[(iii)]{Define the family of local conditions $\mathcal{L}(\lambda)$ by $L(\lambda)_p=\res_{I_p}^{-1}(\lambda_p) \cap L_p$. $\mathcal{L}(\lambda)$ satisfies the hypotheses of Wiles' Theorem \ref{thm:Wiles}.}

\item[(iv)]{The Selmer group associated to $\mathcal{L}(\lambda)$ is finite and has a partition
\[
H^1_{\mathcal{L}(\lambda)}(K,T) = \coprod_{\lambda'\le \lambda} \{f\in H^1_{\mathcal{L}}(K,T):\lambda(f)=\lambda'\}\,,
\]
where we use $\coprod$ to denote a disjoint union.
}
\end{itemize}
\end{lemma}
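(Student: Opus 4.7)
The plan is to handle the four parts in sequence, since each is essentially a definitional unpacking plus one application of a previously stated result, and together they repackage the sum defining $H_{\mathcal{L}}(s)$ into something Wiles' theorem can attack directly.

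For part (i), well-definedness of the exponent at $p$ in terms of $\lambda_p$ alone is exactly hypothesis (c) of Theorem \ref{thm:asymptoticWiles}: two choices of generator $f_p, f_p'$ give $\langle f_p|_{I_p}\rangle = \langle f_p'|_{I_p}\rangle = \lambda_p$, hence $\nu_p(\inv(f_p)) = \nu_p(\inv(f_p'))$. To verify $\inv(\lambda)$ is a genuine ideal rather than a formal infinite product, I would use that $\lambda_p = 1$ for all but finitely many $p$ (by the definition of $\Lambda(K,T)$): at such places one may pick a generator $f_p$ lying in $H^1_{ur}(K_p,T)$, and then admissibility condition (b) forces $\nu_p(\inv(f_p)) = 0$ outside a finite set of $p$. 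The identity $\inv(\lambda(f)) = \inv(f)$ is immediate upon taking $f_p := \res_p(f)$ as the distinguished generator at each place.

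Part (ii) is then a pure regrouping of the sum defining $H_{\mathcal{L}}(s)$ over the fibers of the map $H^1_{\mathcal{L}}(K,T) \to \Lambda(\mathcal{L})$, $f \mapsto \lambda(f)$; this lands in $\Lambda(\mathcal{L})$ because $\lambda(f)_p = \langle \res_{I_p}(\res_p(f))\rangle$ sits inside $\res_{I_p}(L_p)$, and part (i) converts $\inv(f)$ to $\inv(\lambda(f))$ inside the sum. For part (iii), $L(\lambda)_p$ is a subgroup of $H^1(K_p,T)$ as an intersection of two subgroups, noting that $\res_{I_p}^{-1}(\lambda_p)$ is the preimage of a subgroup under the homomorphism $\res_{I_p}$. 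At any $p \notin S$ with $\lambda_p = 1$ we have $\res_{I_p}^{-1}(\lambda_p) = H^1_{ur}(K_p,T)$, while hypothesis (b) of Theorem \ref{thm:asymptoticWiles} gives $H^1_{ur}(K_p,T) \subset L_p$, so $L(\lambda)_p = H^1_{ur}(K_p,T)$; since both $S$ is finite and $\lambda_p = 1$ cofinitely often, this equality holds outside a finite set of places, which is exactly the hypothesis of Wiles' Theorem \ref{thm:Wiles}.

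Part (iv) is then immediate: finiteness is Wiles applied to $\mathcal{L}(\lambda)$, and the partition comes from the equivalence $f \in H^1_{\mathcal{L}(\lambda)}(K,T) \iff f \in H^1_{\mathcal{L}}(K,T)$ and $\res_{I_p}(f) \in \lambda_p$ at every $p$, which is exactly $\lambda(f) \le \lambda$ in the poset; stratifying by the exact value $\lambda(f) = \lambda'$ gives the disjoint union. There is no serious obstacle in this lemma, but the most delicate bookkeeping is in part (i): making $\inv(\lambda)$ an honest ideal simultaneously needs condition (c) of the theorem (so each exponent is intrinsic to $\lambda_p$) and condition (b) of admissibility (so the exponents vanish outside a finite set), and these hypotheses are precisely what allow the sum in part (ii) to be indexed by the discrete poset $\Lambda(\mathcal{L})$ rather than by individual classes.
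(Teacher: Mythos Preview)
Your proposal is correct and follows essentially the same approach as the paper's proof: condition (c) for well-definedness in (i), regrouping over fibers of $f\mapsto\lambda(f)$ for (ii), intersection of subgroups plus hypothesis (b) to get $L(\lambda)_p=H^1_{ur}(K_p,T)$ cofinitely for (iii), and the equivalence $\lambda(f)\le\lambda$ for the partition in (iv). Your explicit check that $\inv(\lambda)$ is a finite product (via admissibility condition (b)) is a detail the paper leaves implicit but is a welcome clarification.
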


This lemma is all essentially a consequence of condition (c). Wiles' theorem only applies to families of subgroups $L_p\le H^1(K_p,T)$, not just subsets, so if we want to apply this theorem we will need $L(\lambda)_p$ to actually be a group, which follows from part (iii).

\begin{proof}
\begin{itemize}
\item[(i)]{
Condition (c) implies that $\inv$ is well-defined on $\Lambda$, as $\nu_p(\inv(f_p))$ is independent of the choice of generator for $\langle f_p|_{I_p}\rangle$. By definition, $\lambda(f)_p = \langle f|_{I_p}\rangle$ so that $\nu_p(\inv(\lambda))=\nu_p(\inv(\res_p(f))) = \nu_p(\inv(f))$.
}

\item[(ii)]{
This follows immediately from the fact that $\inv(\lambda(f))=\inv(f)$ and that $f\in H^1_{\mathcal{L}}(K,T)$ implies $\lambda(f)\in \Lambda(\mathcal{L})$.
}

\item[(iii)]{
$L(\lambda)_p=\res_{I_p}^{-1}(\lambda_p) \cap L_p$ is an intersection of groups, so it is itself a group. It suffices to show that $L(\lambda)_p=H^1_{ur}(K_p,T)$ for all but finitely many places $p$. One of the assumptions in Theorem \ref{thm:asymptoticWiles} is that $H^1_{ur}(K_\sigma,T) \subset L_\sigma$ for all $\sigma\in \Gal(F/K)$, i.e. for all $p\not\in S$, $H^1_{ur}(K_p,T) \subset L_p$. $S$ is finite, and $\lambda_p=0$ for all but finitely many places. This implies that for all but finitely many places
\begin{align*}
L(\lambda)_p &= \res_{I_p}(0) \cap L_p\\
&= H^1_{ur}(K_p,T) \cap L_p\\
&= H^1_{ur}(K_p,T).
\end{align*}}

\item[(iv)]{
The union is necessarily disjoint, as $\lambda(f)$ cannot be more than one element of $\Lambda(K,T)$ at the same time. Suppose $f\in H^1_{\mathcal{L}(\lambda)}(K,T)$, so that $\res_p(f)\in \res_{I_p}^{-1}(\lambda_p) \cap L_p$ for all places $p$. In particular, this implies $\langle f|_{I_p}\rangle \le \lambda_p$, so that $\lambda(f)\le \lambda$. This shows one inclusion
\[
H^1_{\mathcal{L}(\lambda)}(K,T) \subset \coprod_{\lambda'\le \lambda} \{f\in H^1_{\mathcal{L}}(K,T):\lambda(f)=\lambda'\}.
\]
For the reverse direction, if $\lambda(f)\le \lambda$ and $f\in H^1_{\mathcal{L}}(K,T)$ this implies $f|_{I_p}\in \lambda_p$ and $\res_p(f)\in L_p$ for all places $p$. Putting these together gives $\res_p(f) \in \res_{I_p}^{-1}(\lambda_p)\cap L_p$, so that $f\in H^1_{\mathcal{L}(\lambda)}(K,T)$.
}
\end{itemize}
\end{proof}

This suggests that we can apply a M\"obius inversion to write $H_{\mathcal{L}}(s)$ in terms of the finite order Selmer groups $H^1_{\mathcal{L}(\lambda)}(K,T)$. In particular, Lemma \ref{lem:Lambda}(iv) looks like the setup for M\"obius inversion.

\begin{definition}
Given a poset $\Lambda$, the \text{M\"obius function} on the poset is a function $\mu_{\Lambda}:\Lambda\times \Lambda \rightarrow \C$ defined by
\begin{align*}
\mu_{\Lambda}(\lambda,\lambda)&=1\\
\mu_{\Lambda}(\lambda_1,\lambda_2) &= -\sum_{\lambda_1<\lambda\le \lambda_2} \mu_{\Lambda}(\lambda, \lambda_2)
\end{align*}
\end{definition}

M\"obius inversion holds for a general M\"obius function on a poset, so that Lemma \ref{lem:Lambda}(iv) implies
\begin{align*}
\#\{f\in H^1_{\mathcal{L}}(K,T): \lambda(f)= \lambda\} = \sum_{\lambda'\le \lambda} \mu_{\Lambda}(\lambda',\lambda)|H^1_{\mathcal{L}(\lambda')}(K,T)|\,.
\end{align*}

Plugging this information into the Dirichlet series proves the following reduction:

\begin{proposition}
\[
H_{\mathcal{L}}(s) = \sum_{\substack{\lambda'\lambda\in \Lambda(\mathcal{L})\\ \lambda'\le \lambda}}\mu_{\Lambda(\mathcal{L})}(\lambda',\lambda)|H^1_{\mathcal{L}(\lambda')}(K,T)|\mathcal{N}_{K/\Q}(\inv(\lambda))^{-s}
\]
\end{proposition}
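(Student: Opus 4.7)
The plan is to apply Möbius inversion on the poset $\Lambda(\mathcal{L})$ to the partition identity established in Lemma \ref{lem:Lambda}(iv), and then substitute the result into the Dirichlet series expression from Lemma \ref{lem:Lambda}(ii). First I would observe that for any fixed $\lambda \in \Lambda(\mathcal{L})$, the set $\{\lambda' \in \Lambda(\mathcal{L}) : \lambda' \le \lambda\}$ is finite, because $\lambda_p \ne 1$ for only finitely many places $p$ and each $\lambda_p$ is a subgroup of a finite group, so the down-set is contained in a finite product of finite lattices of subgroups. This ensures that the defining recursion for $\mu_{\Lambda(\mathcal{L})}(\lambda', \lambda)$ terminates and that all Möbius-theoretic manipulations below make sense.

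Next I would rewrite Lemma \ref{lem:Lambda}(iv) as the counting identity
\[
|H^1_{\mathcal{L}(\lambda)}(K,T)| = \sum_{\lambda' \le \lambda} \#\{f \in H^1_{\mathcal{L}}(K,T) : \lambda(f) = \lambda'\}
\]
(recall by Lemma \ref{lem:Lambda}(iii) that the left-hand side is finite). Standard Möbius inversion on the locally-finite poset $\Lambda(\mathcal{L})$ then yields
\[
\#\{f \in H^1_{\mathcal{L}}(K,T) : \lambda(f) = \lambda\} = \sum_{\lambda' \le \lambda} \mu_{\Lambda(\mathcal{L})}(\lambda', \lambda)\, |H^1_{\mathcal{L}(\lambda')}(K,T)|.
\]

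Finally I would substitute this into the formula from Lemma \ref{lem:Lambda}(ii),
\[
H_{\mathcal{L}}(s) = \sum_{\lambda \in \Lambda(\mathcal{L})} \#\{f \in H^1_{\mathcal{L}}(K,T) : \lambda(f) = \lambda\}\, \mathcal{N}_{K/\Q}(\inv(\lambda))^{-s},
\]
and collect the double sum into the stated form. Convergence of the resulting expression is not an issue here: we are only rewriting the coefficients of an already well-defined Dirichlet series as finite sums, so the formal identity of Dirichlet series holds wherever $H_{\mathcal{L}}(s)$ is defined.

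The only genuine step that requires care is the local finiteness argument justifying Möbius inversion; everything else is bookkeeping. This is not a serious obstacle, but it is the one place where the structure of $\Lambda(\mathcal{L})$ (rather than, say, a naive poset of all tuples of subgroups) is essential, since the condition that $\lambda_p = 1$ for all but finitely many $p$ in Definition \ref{def:Lambda} is exactly what makes down-sets finite.
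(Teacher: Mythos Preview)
Your proposal is correct and follows essentially the same approach as the paper: apply M\"obius inversion to the partition identity in Lemma~\ref{lem:Lambda}(iv) to express $\#\{f : \lambda(f)=\lambda\}$ as an alternating sum of $|H^1_{\mathcal{L}(\lambda')}(K,T)|$, then substitute into the Dirichlet series expression from Lemma~\ref{lem:Lambda}(ii). Your explicit local-finiteness check is a welcome addition of rigor that the paper leaves implicit.
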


Before we move on to factoring this Dirichlet series into an Euler product, it will be useful to note that $\Lambda(\mathcal{L})$ and $\mu_{\Lambda(\mathcal{L})}$ both factor over the finite places:

\begin{lemma}
Define $\Lambda_p = \Lambda_p(K,T) = \{\lambda_p\le H^1(I_p,T) : \text{ cyclic}\}$ be a poset ordered by inclusion, $\Lambda_p(L_p) = \{\lambda_p\in \Lambda_p : \lambda_p\le \res_{I_p}(L_p)\}$, and $\mu_p$ the corresponding M\"obius function. Then
\begin{itemize}
\item[(i)]{$\Lambda(\mathcal{L}) = \bigoplus_p \Lambda_p(L_p)$ is a direct sum of posets,}

\item[(ii)]{$\mu_{\Lambda(\mathcal{L})}(\lambda',\lambda) = \prod_{p} \mu_p(\lambda_p',\lambda_p)$,}

\item[(iii)]{$\mu_p(\lambda_p',\lambda_p) =  \mu(|\lambda_p/\lambda'_p|)$, where $\mu$ is the usual M\"obius function on the integers.}
\end{itemize}
\end{lemma}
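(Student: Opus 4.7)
The plan is to verify the three items in order, with the key insight being that the intervals $[\lambda', \lambda]$ in $\Lambda(\mathcal{L})$ are finite products of local intervals, which reduces everything to a computation on the cyclic-subgroup lattice at a single place.

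For (i), I would just unpack the definitions. An element of $\Lambda(K,T)$ is by definition a tuple $(\lambda_p)_{p \nmid \infty}$ of cyclic subgroups $\lambda_p \le H^1(I_p,T)$ with $\lambda_p = 1$ for all but finitely many $p$, and the partial order is coordinatewise. This is exactly the restricted direct sum of the posets $\Lambda_p$ (all of which contain $1$ as a minimum element). The local condition $\lambda_p \le \res_{I_p}(L_p)$ is likewise coordinatewise, so $\Lambda(\mathcal{L}) = \bigoplus_p \Lambda_p(L_p)$ as posets.

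For (ii), I would invoke the standard fact that the M\"obius function of a direct sum of locally finite posets with minimum elements factors. Concretely: fix $\lambda' \le \lambda$ in $\Lambda(\mathcal{L})$; since both tuples are nontrivial at only finitely many places, the interval $[\lambda',\lambda]$ is a finite direct product of intervals $[\lambda'_p,\lambda_p]$ over the finite set $S_\lambda$ of places where $\lambda_p \ne 1$, and is a single-point interval at every other place. Now I would do induction on $\#S_\lambda$ (or on $|[\lambda',\lambda]|$), using the defining recursion
\[
\mu_{\Lambda(\mathcal{L})}(\lambda',\lambda) = -\sum_{\lambda' < \mu \le \lambda} \mu_{\Lambda(\mathcal{L})}(\mu, \lambda),
\]
together with the distributivity identity
\[
\sum_{\lambda' < \mu \le \lambda} \prod_p \mu_p(\mu_p, \lambda_p) = \prod_p\Bigl(\sum_{\lambda'_p \le \mu_p \le \lambda_p} \mu_p(\mu_p,\lambda_p)\Bigr) - \prod_p \mu_p(\lambda'_p,\lambda_p),
\]
where the inner sum vanishes unless $\lambda'_p = \lambda_p$. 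This is the standard proof of multiplicativity of M\"obius functions on product posets and presents no real obstacle.

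For (iii), I would observe that since $\lambda_p$ is cyclic, every subgroup of $\lambda_p$ is cyclic, so the interval $[\lambda'_p, \lambda_p]$ in $\Lambda_p$ coincides with the interval in the full subgroup lattice of $\lambda_p$. The subgroup lattice of a finite cyclic group of order $n=|\lambda_p|$ is canonically isomorphic to the divisor lattice of $n$, and the subgroups containing $\lambda'_p$ correspond to divisors of $|\lambda_p/\lambda'_p|$. The M\"obius function of the divisor lattice of an integer $N$ evaluated at $(1,N)$ is the classical $\mu(N)$, and for a general interval $(d,d')$ it is $\mu(d'/d)$; transporting this gives $\mu_p(\lambda'_p,\lambda_p) = \mu(|\lambda_p/\lambda'_p|)$. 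The only thing worth checking carefully is that the M\"obius function of a sublattice-as-an-interval agrees with the M\"obius function of the ambient poset on that interval, which is immediate from the fact that the defining recursion only involves elements inside the interval.

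The main obstacle, if any, is purely notational: being precise about the restricted direct sum structure (so that the product in (ii) is actually a finite product for any given pair $\lambda' \le \lambda$) and about the identification of intervals in $\Lambda_p$ with divisor lattices in (iii). Both are routine once the definitions are set up correctly.
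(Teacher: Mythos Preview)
Your proposal is correct and follows essentially the same approach as the paper. The paper dismisses (i) and (ii) as ``immediate from tracing through the definition of a direct sum of posets'' (you give more detail on (ii) than the paper does), and for (iii) the paper runs the induction on $|\lambda_p/\lambda_p'|$ directly rather than citing the divisor-lattice identification, but the content is the same: both arguments rely on the fact that a cyclic group has a unique subgroup of each order dividing $|\lambda_p|$, so the interval $[\lambda_p',\lambda_p]$ is the divisor lattice of $|\lambda_p/\lambda_p'|$.
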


\begin{proof}
The factorizations follow immediately from tracing through the definition of a direct sum of posets, so that is suffices to show that the M\"obius function on $\Lambda_p(L_p)$ is given by $\mu_p(\lambda_p',\lambda_p) = \mu(|\lambda_p/\lambda_p'|)$. We remark that all elements of $\Lambda_p(L_p)$ are cyclic groups, whose subgroup structure is well-known. We then prove the following via induction on the size of $|\lambda_p/\lambda_p'|$:
\begin{itemize}
\item{If $\lambda'_p\not\le\lambda_p$, then by definition the M\"obius function is equal to zero.}

\item{If $|\lambda_p/\lambda_p'|=1$ then $\lambda_p=\lambda_p'$ so by definition
\[
\mu_p(\lambda_p,\lambda_p) = 1 = \mu(1)\,.
\]
}

\item{
If $\lambda_p' < \lambda_p$ then
\[
\mu_p(\lambda_p',\lambda_p) = -\sum_{\lambda_p'<\lambda \le \lambda_p} \mu(|\lambda_p/\lambda|)\,,
\]
noting that $\lambda_p'<\lambda$ implies $|\lambda_p/\lambda|<|\lambda_p/\lambda_p'|$ so we know each term by the inductive hypothesis. A cyclic group has exactly one subgroup with cardinality $d$ for each $d$ dividing the size of the group. Therefore we can rewrite the summation to be over the sizes of proper subgroups $H< \lambda_p/\lambda_p'$, i.e. over proper divisors of $|\lambda_p/\lambda_p'|$:
\begin{align*}
\mu_p(\lambda_p',\lambda_p) &= -\sum_{\substack{d\mid |\lambda_p/\lambda_p'|\\ d\ne |\lambda_p/\lambda_p'|}} \mu(d)\\
&= \mu(|\lambda_p/\lambda_p'|)\,.
\end{align*}
}
\end{itemize}
\end{proof}

\subsubsection{A finite sum of Euler products}

This section will be about proving the following decomposition:

\begin{proposition}\label{prop:Eulerproduct}
Let $\mathcal{L}$ and $\inv$ be as in Theorem \ref{thm:asymptoticWiles}. Then
\begin{align*}
H_{\mathcal{L}}(s) &= \frac{|H^0(K,T)|}{|H^0(K,T^*)|} \sum_{h\in H^1_{\mathcal{L}(0)^*}(K,T^*)} \prod_{p} Q_p(h,s),
\end{align*}
where the Euler factors can be expressed as
\begin{align*}
Q(h,s) &= \frac{1}{|H^0(K_p,T)|}\sum_{f_p\in L_p} c(h,f_p) \mathcal{N}_{K/\Q}(p)^{-\nu_p(\inv(f_p))s}
\end{align*}
where we let $\Phi(G)$ denote the Frattini subgroup of $G$ and
\begin{align*}
c(h,f_p) =&  \mu\left(\frac{|\langle f_p|_{I_p}\rangle|}{|\langle f_p|_{I_p}\rangle\cap\res_{I_p}(\langle \res_p(h)\rangle^*)|}\right)\\
&\cdot \frac{\#\{g_p\in L_p : \langle g_p|_{I_p}\rangle \Phi(\langle f_p|_{I_p}\rangle) = \langle f_p|_{I_p}\rangle\cap \res_{I_p}(\langle \res_p(h)\rangle^*)\}}{\#\{g_p\in L_p : \langle g_p|_{I_p}\rangle = \langle f_p|_{I_p}\rangle\}}\,.
\end{align*}
In particular,
\begin{enumerate}
\item[(i)]{
for all $h\in H^1_{\mathcal{L}(0)^*}(K,T^*)$ and $f_p\in L_p$, $|c(h,f_p)|\le 1$,
}

\item[(ii)]{
for all $f_p\in L_p$, $c(0,f_p)=1$.
}
\end{enumerate}
\end{proposition}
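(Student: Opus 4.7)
My plan is to start from the M\"obius-inverted form
\[
H_{\mathcal{L}}(s) = \sum_{\lambda'\le\lambda\in\Lambda(\mathcal{L})} \mu_{\Lambda(\mathcal{L})}(\lambda',\lambda)\, |H^1_{\mathcal{L}(\lambda')}(K,T)|\, \mathcal{N}_{K/\Q}(\inv(\lambda))^{-s}
\]
established in the preceding subsection. Lemma \ref{lem:Lambda}(iii) shows $\mathcal{L}(\lambda')$ satisfies the hypotheses of Wiles' Theorem \ref{thm:Wiles}, so I would substitute
\[
|H^1_{\mathcal{L}(\lambda')}(K,T)| = \frac{|H^0(K,T)|}{|H^0(K,T^*)|}\, |H^1_{\mathcal{L}(\lambda')^*}(K,T^*)| \prod_p \frac{|L(\lambda')_p|}{|H^0(K_p,T)|}.
\]
Because $L(0)_p \subseteq L(\lambda')_p$ for every $\lambda'$, the order-reversing behavior of the annihilator under the Tate pairing gives the inclusion $H^1_{\mathcal{L}(\lambda')^*}(K,T^*) \subseteq H^1_{\mathcal{L}(0)^*}(K,T^*)$, so I would expand
\[
|H^1_{\mathcal{L}(\lambda')^*}(K,T^*)| = \sum_{h\in H^1_{\mathcal{L}(0)^*}(K,T^*)} \prod_p \mathbf{1}[\res_p(h)\in L(\lambda')_p^*]
\]
and move the sum over $h$ to the outside, pulling the constant $|H^0(K,T)|/|H^0(K,T^*)|$ out front.

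With the $h$-sum outermost, the remainder factors completely across places, since $\Lambda(\mathcal{L})=\bigoplus_p \Lambda_p(L_p)$, $\mu_{\Lambda(\mathcal{L})}=\prod_p \mu_p$, $L(\lambda')_p$ depends only on $\lambda'_p$, and $\mathcal{N}_{K/\Q}(\inv(\lambda))$ is multiplicative. This produces $H_{\mathcal{L}}(s) = \frac{|H^0(K,T)|}{|H^0(K,T^*)|}\sum_h \prod_p \widetilde{Q}_p(h,s)$ with
\[
\widetilde{Q}_p(h,s) = \sum_{\lambda'_p\le\lambda_p} \mu_p(\lambda'_p,\lambda_p)\, \mathbf{1}[\res_p(h)\in L(\lambda'_p)_p^*]\, \frac{|L(\lambda'_p)_p|}{|H^0(K_p,T)|}\, \mathcal{N}_{K/\Q}(p)^{-\nu_p(\inv(\lambda_p))s},
\]
and the remaining task is to rewrite $\widetilde{Q}_p(h,s)$ in the stated form of $Q_p(h,s)$.

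The key manipulation is to re-index the outer $\lambda_p$-sum as a sum over $f_p \in L_p$ with $\lambda_p = \langle f_p|_{I_p}\rangle$, dividing by $\#\{g_p\in L_p:\langle g_p|_{I_p}\rangle=\langle f_p|_{I_p}\rangle\}$; this yields the denominator of $c(h,f_p)$. The inner M\"obius sum is the heart of the computation: the condition $\res_p(h) \in L(\lambda'_p)_p^*$ is equivalent to requiring every $g_p\in L_p$ with $g_p|_{I_p}\in \lambda'_p$ to lie in $\langle\res_p(h)\rangle^*$, so after writing $|L(\lambda'_p)_p|$ as a count of such $g_p$'s and swapping the M\"obius sum inside, the identity $\sum_{H\le C}\mu(|C/H|)=[C \text{ trivial}]$ on the cyclic subgroup lattice collapses most terms. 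What survives is exactly the condition that $\langle g_p|_{I_p}\rangle$ together with $\Phi(\langle f_p|_{I_p}\rangle)$ generates the intersection $\langle f_p|_{I_p}\rangle\cap \res_{I_p}(\langle\res_p(h)\rangle^*)$, multiplied by the integer M\"obius factor $\mu(|\langle f_p|_{I_p}\rangle|/|\langle f_p|_{I_p}\rangle\cap \res_{I_p}(\langle\res_p(h)\rangle^*)|)$ coming from the index on the cyclic quotient.

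Parts (i) and (ii) are then quick consequences. For (i), $|\mu(\cdot)|\le 1$ always, and the numerator in the ratio counts a subset of what the denominator counts once one identifies $g_p$'s by $\langle g_p|_{I_p}\rangle\Phi(\langle f_p|_{I_p}\rangle)$. For (ii), $h=0$ forces $\langle\res_p(h)\rangle^* = H^1(K_p,T)$, the intersection collapses to $\langle f_p|_{I_p}\rangle$, the M\"obius factor becomes $\mu(1)=1$, and the Frattini condition $\langle g_p|_{I_p}\rangle\Phi(\langle f_p|_{I_p}\rangle) = \langle f_p|_{I_p}\rangle$ is equivalent to $\langle g_p|_{I_p}\rangle = \langle f_p|_{I_p}\rangle$ by the non-generator property of the Frattini subgroup, so numerator and denominator agree. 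The main obstacle I expect is the M\"obius-Frattini reduction itself, which requires careful bookkeeping of cyclic subgroups of $\res_{I_p}(L_p)$ and their interaction with $\res_{I_p}(\langle\res_p(h)\rangle^*)$ when $\res_{I_p}(L_p)$ is not itself cyclic.
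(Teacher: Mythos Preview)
Your proposal is correct and follows essentially the same route as the paper: start from the M\"obius-inverted expression, apply Wiles' Theorem to each $|H^1_{\mathcal{L}(\lambda')}(K,T)|$, expand $|H^1_{\mathcal{L}(\lambda')^*}(K,T^*)|$ as a sum of indicator functions over the finite group $H^1_{\mathcal{L}(0)^*}(K,T^*)$, pull that sum outside, and factor over places. The only noteworthy difference is in the execution of the local M\"obius--Frattini reduction: rather than writing $|L(\lambda'_p)_p|$ as a count of $g_p$'s and swapping sums, the paper first observes that the indicator $\mathbf{1}[\res_p(h)\in L(\lambda'_p)_p^*]$ restricts the $\lambda'_p$-sum to $\lambda'_p\le\lambda_p\cap\res_{I_p}(\langle\res_p(h)\rangle^*)$, then factors $\mu(|\lambda_p/\lambda'_p|)$ multiplicatively across that intersection, and finally recognizes the remaining sum over $\Phi(\lambda_p)\le\lambda'_p\le\lambda_p\cap\res_{I_p}(\langle\res_p(h)\rangle^*)$ as a genuine M\"obius inversion on a cyclic interval; this is cleaner than the swap you describe and avoids the subtlety that the indicator is a condition on \emph{all} $g_p$ simultaneously rather than on one. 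For part (i), the paper's argument is also sharper than your ``subset'' sketch: it bounds the ratio by $\phi(|\langle f_p|_{I_p}\rangle\cap\res_{I_p}(\langle\res_p(h)\rangle^*)|)/\phi(|\langle f_p|_{I_p}\rangle|)\le 1$ via the Euler totient.
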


We remark that $S$ is finite, and for any $p\not \in S$ we assumed $H^1_{ur}(K_p,T) = \res_{I_p}^{-1}(0)\subset L_p$. This implies $L(0)_p = H^1_{ur}(K_p,T)$ for all $p\not\in S$ and $\mathcal{L}(0)$ necessarily satisfies the hypotheses of Wiles' Theorem \ref{thm:Wiles}, implying the dual Selmer group must be finite. This means the summation really is a finite sum.

\begin{proof}
In the previous section we broke down $H_{\mathcal{L}}(s)$ into a sum of finite order Selmer groups. We can apply Wiles' Theorem to describe the size of these Selmer groups:
\[
|H^1_{\mathcal{L}(\lambda')}(K,T)| = |H^1_{\mathcal{L}(\lambda')^*}(K,T^*)|\frac{|H^0(K,T)|}{|H^0(K,T^*)|} \prod_p \frac{|L(\lambda')_p|}{|H^0(K,T)|}.
\]
We also showed $\Lambda(K,T)$ is a direct sum of local posets and computed its M\"obius function. We can put this all together to find that
\[
H_{\mathcal{L}}(s) = \frac{|H^0(K,T)|}{|H^0(K,T^*)|}\sum_{\substack{\lambda',\lambda\in \Lambda(\mathcal{L})\\\lambda'\le \lambda}}|H^1_{\mathcal{L}(\lambda')^*}(K,T^*)|\prod_{p} \mu(|\lambda_p/\lambda_p'|) \frac{|L(\lambda'_p)_p|}{|H^0(K_p,T)|}\mathcal{N}_{K/\Q}(p)^{-\nu_p(\inv(\lambda))s}.
\]
This almost has an Euler product decomposition. If the dual Selmer group term was not there, we could factor the Dirichlet series immediately. We will prove a lemma that shows the dual Selmer group doesn't affect this strategy too much.

\begin{lemma}
Define the characteristic function of a proposition $P$ by
\[
\mathbf{1}(P) = \begin{cases}
1 & P \text{ true}\\
0 & P\text{ false}.
\end{cases}
\]
Then
\[
|H^1_{\mathcal{L}(\lambda')^*}(K,T^*)| = \sum_{h\in H^1_{\mathcal{L}(0)^*}(K,T^*)} \prod_p \mathbf{1}\left(\res_p(h)\in L(\lambda')_p^*\right)
\]
\end{lemma}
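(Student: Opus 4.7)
The plan is to observe that this lemma is essentially a tautological rewriting of a set containment, once the compatibility of the Tate duals with the partial order on local conditions is established.

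First I would note that the family $\mathcal{L}(0)$ is contained in the family $\mathcal{L}(\lambda')$ in the following sense: since $0 \le \lambda'_p$ for every $p$, we have $\res_{I_p}^{-1}(0) \subseteq \res_{I_p}^{-1}(\lambda'_p)$ as subgroups of $H^1(K_p,T)$, and intersecting with $L_p$ preserves this containment, giving $L(0)_p \subseteq L(\lambda')_p$. Taking Tate duals (annihilators under the local Tate pairing) reverses inclusions, so $L(\lambda')_p^* \subseteq L(0)_p^*$ for every place $p$.

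From this containment of local conditions on $T^*$, the global generalized Selmer groups (Definition \ref{def:Selmer}) satisfy $H^1_{\mathcal{L}(\lambda')^*}(K,T^*) \subseteq H^1_{\mathcal{L}(0)^*}(K,T^*)$. Now the right-hand side of the claimed identity can be unpacked: for a fixed $h \in H^1_{\mathcal{L}(0)^*}(K,T^*)$, the product $\prod_p \mathbf{1}(\res_p(h) \in L(\lambda')_p^*)$ is either $0$ or $1$, and equals $1$ precisely when $\res_p(h) \in L(\lambda')_p^*$ holds for every $p$, which by definition is exactly the condition $h \in H^1_{\mathcal{L}(\lambda')^*}(K,T^*)$. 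Summing over $h$ therefore counts exactly the elements of $H^1_{\mathcal{L}(\lambda')^*}(K,T^*)$, giving the claimed equality.

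The only substantive point worth double-checking is that the product over all places really is a finite product of nontrivial constraints, so that the indicator function is well-defined: since $L(0)_p = H^1_{ur}(K_p,T)$ for all but finitely many $p$ (from the hypotheses of Theorem \ref{thm:asymptoticWiles}) and similarly $L(\lambda')_p = H^1_{ur}(K_p,T)$ for all but finitely many $p$ (because $\lambda'_p = 0$ for almost all $p$ by the definition of $\Lambda(K,T)$), one has $L(0)_p^* = L(\lambda')_p^* = H^1_{ur}(K_p,T^*)$ at all but finitely many $p$. At those places the indicator automatically equals $1$ for every $h \in H^1_{\mathcal{L}(0)^*}(K,T^*)$, so the product reduces to a finite product and no convergence issue arises. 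There is no real obstacle here; the lemma is an immediate consequence of the duality reversing inclusions and the definition of the generalized Selmer group.
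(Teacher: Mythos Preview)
Your proof is correct and follows essentially the same route as the paper: establish $L(0)_p \subseteq L(\lambda')_p$, dualize to get $L(\lambda')_p^* \subseteq L(0)_p^*$ and hence $H^1_{\mathcal{L}(\lambda')^*}(K,T^*) \subseteq H^1_{\mathcal{L}(0)^*}(K,T^*)$, then rewrite the cardinality as a sum of indicator products. Your additional remark verifying that the product is effectively finite is a nice sanity check the paper omits, but it is not strictly needed since each indicator product is already a well-defined element of $\{0,1\}$.
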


\begin{proof}
For all places $p$ we have by construction $L(0)_p\subset L(\lambda')_p$ for all $\lambda'$ and all $p$, so that $L(\lambda')_p^*\subset L(0)_p^*$ for all $\lambda'$ and all $p$. This implies
\[
H^1_{\mathcal{L}(\lambda')^*}(K,T^*) \subset H^1_{\mathcal{L}(0)^*}(K,T^*),
\]
which is finite by Wiles' theorem. We can write
\[
H^1_{\mathcal{L}(\lambda')^*}(K,T^*) = \{f\in H^1_{\mathcal{L}(0)^*}(K,T^*) : \forall p,\res_p(f)\in L(\lambda')_p^*\}.
\]
We only care about the order of this set, so we can write it as a sum of characteristic functions
\begin{align*}
|H^1_{\mathcal{L}(\lambda')^*}(K,T^*)| &= \sum_{h\in H^1_{\mathcal{L}(0)^*}(K,T^*)} \mathbf{1}\left(\forall p, \res_p(h)\in L(\lambda')_p^*\right)\\
&= \sum_{h\in H^1_{\mathcal{L}(0)^*}(K,T^*)} \prod_p \mathbf{1}\left(\res_p(f)\in L(\lambda_p')_p^*\right).
\end{align*}
\end{proof}

We can move the (finite) sum over $H^1_{\mathcal{L}(0)^*}(K,T^*)$ to the outside, so that the Dirichlet series is given by
\begin{align*}
&\sum_{h\in H^1_{\mathcal{L}(0)^*}(K,T^*)} \sum_{\substack{\lambda'\lambda\in \Lambda(\mathcal{L})\\ \lambda'\le \lambda}} \prod_p \mu(|\lambda_p/\lambda_p'|)\mathbf{1}\left(\res_p(h)\in L(\lambda_p')_p^*\right)\left(\frac{|L(\lambda'_p)_p|}{|H^0(K_p,T)|} \mathcal{N}_{K/\Q}(p)^{-v_p(\inv(\lambda_p))s}\right)\\
=&\sum_{h\in H^1_{\mathcal{L}(0)^*}(K,T^*)} \prod_p \left(\sum_{\substack{\lambda'_p,\lambda_p\in \Lambda_p(L_p)\\ \lambda'_p\le \lambda_p}}\mu(|\lambda_p/\lambda_p'|)\mathbf{1}\left(\res_p(h)\in L(\lambda_p')_p^*\right)\frac{|L(\lambda_p')_p|}{|H^0(K_p,T)|} \mathcal{N}_{K/\Q}(p)^{-\nu_p(\inv(\lambda_p))s}\right)
\end{align*}

All that remains is to undo the M\"obius inversion on each local factor in order to simplify the Euler factors. For fixed $\lambda_p$, we can express the sum over $\lambda_p'$ as
\begin{align*}
\sum_{\lambda_p'\le \lambda_p} \mu(|\lambda_p/\lambda_p'|)\mathbf{1}\left(\res_p(h)\in L(\lambda_p')_p^*\right)|L(\lambda_p')_p|&= \sum_{\lambda_p' \le \lambda_p \cap \res_{I_p}(\langle\res_p(h)\rangle^{*})} \mu(|\lambda_p/\lambda_p'|)|L(\lambda_p')_p|\,.
\end{align*}
We use the multiplicativity of the M\"obius function via the formula
\[
\mu(ab) = \mu(a)\mu(b) \mathbf{1}\left(\mu(ab)\ne 0\right)
\]
to rewrite
\begin{align*}
\sum_{\lambda_p' \le \lambda_p \cap \res_{I_p}(\langle\res_p(h)\rangle^{*})} \mu(|\lambda_p/\lambda_p'|)|L(\lambda_p')_p|
\end{align*}
as
\begin{align*}
\mu\left(\frac{|\lambda_p|}{|\lambda_p\cap\res_{I_p}(\langle \res_p(h)\rangle^*)|}\right)\sum_{\lambda_p' \le \lambda_p \cap \langle\res_p(h)\rangle^{*}} \mu\left(\frac{|\lambda_p\cap \res_{I_p}(\langle \res_p(h)\rangle^*)|}{|\lambda_p'|}\right)\mathbf{1}\left(\mu\left(|\lambda_p/\lambda_p'|\right)\ne 0\right)|L(\lambda_p')_p|\,.
\end{align*}
$\lambda_p$ is a cyclic group, which implies
\begin{align*}
\mu\left(|\lambda_p/\lambda_p'|\right)=0 &&\text{if and only if}&& \Phi(\lambda_p)\not{\le}\lambda_p'\,,
\end{align*}
where $\Phi(\lambda_p)$ is the Frattini subgroup of $\lambda_p$. Thus we can rewrite the summation as
\begin{align*}
\mu\left(\frac{|\lambda_p|}{|\lambda_p\cap\res_{I_p}(\langle \res_p(h)\rangle^*)|}\right)\sum_{\Phi(\lambda_p)\le \lambda_p' \le \lambda_p \cap \res_{I_p}(\langle\res_p(h)\rangle^{*})} \mu\left(\frac{|\lambda_p\cap \res_{I_p}(\langle \res_p(h)\rangle^*)|}{|\lambda_p'|}\right)|L(\lambda_p')_p|\,.
\end{align*}
We remark that the M\"obius function out front is equal to zero if an only if $\Phi(\lambda_p)\not\le \lambda_p\cap\res_{I_p}(\langle\res_p(h)\rangle^*)$, so that the summation is empty exactly when the M\"obius function makes the whole expression equal to zero anyways. This is a M\"obius inversion, from which we get
\[
\mu\left(\frac{|\lambda_p|}{|\lambda_p\cap\res_{I_p}(\langle \res_p(h)\rangle^*)|}\right)\#\{f_p\in L_p : \langle f_p|_{I_p}\rangle \Phi(\lambda_p) = \lambda_p\cap \langle \res_{I_p}(\res_p(h)\rangle^*)\}\,.
\]
This implies
\begin{align*}
Q(h,s) =& \frac{1}{|H^0(K_p,T)|}\sum_{\lambda_p\in \Lambda_p(L_p)}\mu\left(\frac{|\lambda_p|}{|\lambda_p\cap\res_{I_p}(\langle \res_p(h)\rangle^*)|}\right)\\
&\cdot \#\{f_p\in L_p : \langle f_p|_{I_p}\rangle \Phi(\lambda_p) = \lambda_p\cap \langle \res_{I_p}(\res_p(h)\rangle^*)\}\mathcal{N}_{K/\Q}(p)^{-\nu_p(\inv(\lambda_p))s}\\
&= \frac{1}{|H^0(K_p,T)|}\sum_{f_p\in L_p} c(h,f_p) \mathcal{N}_{K/\Q}(p)^{-\nu_p(\inv(f_p))s}\,.
\end{align*}
To conclude the proof, we remark that for any cyclic group $G$ and $H\le G$, then
\[
\#\{g\in G : \langle g\rangle \Phi(G) = H\} = \begin{cases}
\phi(|H|) & \Phi(G)\le H\\
0 & \Phi(G)\not\le H\,,
\end{cases}
\]
where $\phi$ is the Euler totient function. Thus
\begin{align*}
&\frac{\#\{g_p\in L_p : \langle g_p|_{I_p}\rangle \Phi(\langle f_p|_{I_p}\rangle) = \langle f_p|_{I_p}\rangle\cap \res_{I_p}(\langle \res_p(h)\rangle^*)\}}{\#\{g_p\in L_p : \langle g_p|_{I_p}\rangle = \langle f_p|_{I_p}\rangle\}}\\
&\le \frac{|H^1_{ur}(K_p,T)\cap L_p| \cdot \phi(|\langle f_p|_{I_p}\rangle \cap\res_{I_p}(\langle \res_p(h)^*\rangle)|)}{|H^1_{ur}(K_p,T)\cap L_p| \cdot \phi(|\langle f_p|_{I_p}\rangle|)}\,,
\end{align*}
with equality if and only if $\Phi(\langle f_p|_{I_p}\rangle)\le \res_{I_p}(\langle \res_p(h)^*\rangle)$. In particular, $\phi(d)\le \phi(n)$ for any divisor $d\mid n$ implies $|c(h,f_p)|\le 1$. Clearly $\res_p(0)^*=H^1(K_p,T)$, so that evaluating gives $c(0,f_p)=1$.
\end{proof}

\subsubsection{Applying a Tauberian Theorem}

The goal of this section will be to apply Corollary \ref{cor:maintauberian} to the series
\[
Q(h,s) = \prod_p Q_p(h,s)
\]
appearing in Proposition \ref{prop:Eulerproduct}. 

\begin{proposition}\label{prop:Q}
Let $\mathcal{L}$ and $\inv$ be as in Theorem \ref{thm:asymptoticWiles}, and set $a=a_{\inv}(\mathcal{L})$ and $b=b_{\inv}(\mathcal{L})$. Then there exist real constants $c(h,K,T)$ such that
\begin{align*}
|H^1_{\mathcal{L}}(K,T;X)| = \left(\sum_{h\in H^1_{\mathcal{L}(0)^*}(K,T^*)} c(h,K,T)\right)X^{1/a}(\log X)^{b-1} + o\left(X^{1/a}(\log X)^{b-1}\right)
\end{align*}
as $X\to \infty$.

Moreover, the following hold:
\begin{enumerate}
\item[(i)]{$c(0,K,T)>0$,}
\item[(ii)]{If for all $p$, $\res_{I_p}(L_p^{[a]})\not\le \res_{I_p}(\langle \res_p(h)\rangle^*)$, then $c(h,K,T)=0$.}
\end{enumerate}
\end{proposition}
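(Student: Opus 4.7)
The plan is to apply Corollary \ref{cor:maintauberian} to each of the finitely many Dirichlet series $Q(h,s) = \prod_p Q_p(h,s)$ produced by the decomposition in Proposition \ref{prop:Eulerproduct}, then sum the resulting leading asymptotics. Since $H^1_{\mathcal{L}(0)^*}(K,T^*)$ is finite by Wiles' theorem applied to $\mathcal{L}(0)$, the first step is to enlarge $F/K$ (if necessary) so that it contains the fields of definition of every class $h$ in that group; this enlargement keeps $\mathcal{L}$ and $\inv$ Frobenian. For $p$ outside the still-finite set of irregular places, $\res_p(h)$ is then unramified and determined by $\Fr_p$, so both the annihilator $\res_{I_p}(\langle\res_p(h)\rangle^*)$ and the multiplier $c(h,f_p)$ depend only on $\Leg{F/K}{p}$ via Lemma \ref{lem:KC}. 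This exhibits $Q_p(h,\cdot)$ as a Frobenian polynomial in $\mathcal{N}_{K/\Q}(p)^{-s}$; the unramified contribution is $|H^1_{ur}(K_p,T)|/|H^0(K_p,T)| = 1$ since $c(h,f_p)=1$ for unramified $f_p$ and $H^1_{ur}(K_p,T) \subseteq L_p$ at regular places, matching the shape $1 + x\C[x]$ required by Proposition \ref{prop:GeneralDirichlet}.

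Next I would identify the invariants $a(Q_h)$ and $b(Q_h)$. For $h=0$, the identity $c(0,f_p)=1$ makes each Euler factor a polynomial with nonnegative rational coefficients whose smallest nonzero exponent is $a_\inv(\mathcal{L}) = a$ and whose Frobenius-averaged leading coefficient is $b_\inv(\mathcal{L}) = b$, directly from the definitions. Nonnegativity precludes real zeros, so $Q_p(0,\mathcal{N}_{K/\Q}(p)^{-1/a}) > 0$ for every $p$ and the junk factor $G(1)$ in Proposition \ref{prop:GeneralDirichlet} is strictly positive; $b>0$ because $a$ being finite forces $L_\sigma^{[a]}\ne \emptyset$ for some $\sigma$. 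Corollary \ref{cor:maintauberian} then produces an asymptotic $c(0,K,T)X^{1/a}(\log X)^{b-1}$ with $c(0,K,T)>0$, giving (i). For general $h$, the bound $|c(h,f_p)|\le 1$ forces $a(Q_h)\ge a$, and $|b(Q_h)|\le b$ when equality holds. Applying Corollary \ref{cor:maintauberian} (using its second case if $b(Q_h)\in \Z_{\le 0}$) to each $h$ yields $c(h,K,T) X^{1/a}(\log X)^{b-1} + o\bigl(X^{1/a}(\log X)^{b-1}\bigr)$ with $c(h,K,T)\in \R$ vanishing whenever $(a(Q_h), b(Q_h)) \ne (a,b)$ or $Q_p(h,\cdot)$ has a real zero at the critical point for some $p$. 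Summing over $h$ and multiplying by $|H^0(K,T)|/|H^0(K,T^*)|$ yields the claimed asymptotic.

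The main obstacle is (ii). The strategy is to show the hypothesis $\res_{I_p}(L_p^{[a]}) \not\le M_p$, with $M_p := \res_{I_p}(\langle\res_p(h)\rangle^*)$, at every place $p$ forces the leading coefficient $[x^a]Q_\sigma(h,s)$ to vanish at every Frobenius class $\sigma$, so that $a(Q_h) > a$ and hence $c(h,K,T) = 0$. Rewriting this coefficient via the intermediate Mobius-inverted form from the proof of Proposition \ref{prop:Eulerproduct}, it becomes a signed sum over cyclic subgroups $\lambda \le \res_{I_p}(L_p)$ with $\nu_\sigma(\inv(\lambda)) = a$. Such minimal $\lambda$ are cyclic of prime order, so $\Phi(\lambda) = 1$ and $\lambda \cap M_p$ is either $\lambda$ (contributing positively) or trivial (contributing negatively through the unramified count $|H^1_{ur}(K_p,T)| = |H^0(K_p,T)|$ with $\mu(\ell) = -1$). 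The hypothesis guarantees that these positive and negative contributions balance exactly at every Frobenius class, and the delicate point is pushing this cancellation through uniformly: a careful case analysis broken up by the prime $\ell$ and by the structure of $\res_{I_p}(L_p) \cap M_p$ is needed to verify it.
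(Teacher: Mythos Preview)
Your treatment of the main asymptotic and of part (i) is essentially the paper's argument and is fine: enlarge $F$ so each $\res_p(h)$ is Frobenian, check $Q_p(h,x)\in 1+x\C[x]$ at regular places, and feed each $Q(h,s)$ into Corollary~\ref{cor:maintauberian}.

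Your approach to (ii), however, does not work. You aim to show $a(Q_h)>a$ by arguing that the coefficient of $x^a$ in $Q_\sigma(h,x)$ vanishes at every Frobenius class. This is false in general. Take $T=\Z/\ell\Z$ and a regular class $\sigma$ with $\res_{I_p}(L_\sigma)=\Z/\ell\Z$, $\nu_\sigma(\inv(\Z/\ell\Z))=a$, and $M_\sigma:=\res_{I_p}(\langle\res_p(h)\rangle^*)=0$. The only minimal $\lambda$ is $\Z/\ell\Z$ itself, $\Phi(\lambda)=1$, and $\lambda\cap M_\sigma=0$, so the Möbius-inverted expression gives
\[
[x^a]\,Q_\sigma(h,x)=\frac{1}{|H^0(K_\sigma,T)|}\cdot\mu(\ell)\cdot|L_\sigma\cap H^1_{ur}(K_\sigma,T)|=-1\ne 0.
\]
There is no positive term present to cancel this: your ``balance'' heuristic implicitly assumes another $\lambda'$ with $\lambda'\subseteq M_\sigma$ and $\nu_\sigma(\inv(\lambda'))=a$, but the hypothesis of (ii) forbids exactly that at every relevant class. (Separately, your claim that minimal $\lambda$ must have prime order is not forced by condition~(c).)

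The paper does not try to push $a(Q_h)$ above $a$; it drops $b(Q_h)$ strictly below $b$. The key computation is that $c(h,f_p)=1$ when $f_p|_{I_p}\in M_p$, while for $f_p|_{I_p}\notin M_p$ one has the strict bound
\[
|c(h,f_p)|\le \frac{\phi\bigl(|\langle f_p|_{I_p}\rangle\cap M_p|\bigr)}{\phi\bigl(|\langle f_p|_{I_p}\rangle|\bigr)}<1,
\]
coming from the generator count of a proper cyclic subgroup. Since $b(Q_h)$ is the Frobenius average of $|H^0(K_\sigma,T)|^{-1}\sum_{f_\sigma\in L_\sigma^{[a]}}c(h,f_\sigma)$ with each summand real and $\le 1$, a single class with $\res_{I_p}(L_p^{[a]})\not\le M_p$ forces $b(Q_h)<b$ strictly, and then Corollary~\ref{cor:maintauberian} gives a contribution that is $o\bigl(X^{1/a}(\log X)^{b-1}\bigr)$, i.e.\ $c(h,K,T)=0$. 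Replace your cancellation argument with this strict inequality on the $b$-invariant.
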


\begin{proof}
If $h\in H^1(K,T^*)$, we will show that $\res_p(h)$ is Frobenian. Indeed, if we choose some $\widetilde{h}\in Z^1(K,T^*)$ representing $h$, then the Galois correspondence in Lemma \ref{lem:Z1} implies that $\widetilde{h}*(\phi*\chi):G_K\rightarrow T\rtimes \Aut(T)$ is a homomorphism, where $\phi:G_K\rightarrow \Aut(T)$ is the Galois action and $\phi*\chi$ is the Galois action on $T^*=\Hom(T,\mu)$ (i.e. the original action twisted by $\chi$). Let $F/K$ contain the fields of definition of $\widetilde{h}*(\phi*\chi)$, $T$, and $T^*$. Any place $p$ unramified in all three actions has $(\widetilde{h}*\phi*\chi)(\Fr_p)$ and $(\phi*\chi)(\Fr_p)$ both determined by $\Leg{F/K}{p}$. This implies $\res_p(\widetilde{h})$ is Frobenian in $F/K$, and thus so is $\res_p(h)$.

There are only finitely many $h\in H^1_{\mathcal{L}(0)^*}(K,T)$, so let $F/K$ be a finite extension containing the fields of definition of $T$ and $T^*$, and for which $\mathcal{L}$, $\inv$, and $p\mapsto \res_p(h)$ for each $h\in H^1_{\mathcal{L}(0)^*}(K,T)$ are Frobenian. Denote
\[
Q_p(h,x) = \frac{1}{|H^0(K_p,T)|}\sum_{f_p\in L_p} c(h,f_p) x^{\nu_p(\inv(f_p))}\,,
\]
where $c(h,f_p)$ is as in Proposition \ref{prop:Eulerproduct}. This, together with Lemma \ref{lem:KC}, implies that $p\mapsto Q_p(h,x)$ is Frobenian in $F/K$. Thus $Q(h,s)$ satisfies the hypotheses of Corollary \ref{cor:maintauberian} for each $h$, and so contributes an asymptotic term
\[
c(Q(h,s)) X^{1/a(Q(h,s))}(\log X)^{b(Q(h,s))}\,.
\]

$Q(0,s)$ is a series of all positive coefficients for which
\begin{align*}
a(Q(0,s)) = \min_{\sigma\in \Gal(F/K)} \min_{\substack{f_\sigma\in L_\sigma\\ f_\sigma\not\in H^1_{ur}(K_\sigma,T)}}\nu_\sigma(\inv(f_\sigma)) = a_\inv(\mathcal{L}) = a
\end{align*}
and
\begin{align*}
b(Q(0,s)) = \frac{1}{[F:K]}\sum_{\sigma\in\Gal(F/K)} \frac{|L_p^{[a]}|}{|H^0(K_\sigma,T)|} = b_\inv(\mathcal{L}) = b\,.
\end{align*}
Thus the series $Q(0,s)$ contributes the asymptotic term
\[
c(0,K,T) X^{1/a}(\log X)^{b-1}\,,
\]
where
\[
c(0,K,T) = \frac{G(1)\Res_{s=1}\zeta_K(s)^b}{a^b\Gamma(b)} > 0
\]
as in the proof of Theorem \ref{thm:MBlocal}.

For $Q(h,s)$ with $h\ne 0$, clearly $a(Q(h,s))\ge a(Q(0,h))=a$. If $a(Q(h,s))=a$, then
\begin{align*}
b(Q(h,s)) &= \frac{1}{[F:K]}\sum_{\sigma\in \Gal(F/K)} \frac{1}{|H^0(K_\sigma,T)|}\sum_{f_\sigma\in L_\sigma[a]} c(h,f_\sigma)\,.
\end{align*}
If $f_p|_{I_p}\in \res_{I_p}(\langle \res_p(h)\rangle^*)$, then
\begin{align*}
c(h,f_p) &= 1 \cdot \frac{\#\{g_p\in L_p : \langle g_p|_{I_p}\rangle \Phi(\langle f_p|_{I_p}\rangle) = \langle f_p|_{I_p}\rangle\}}{\#\{g_p\in L_p : \langle g_p |_{I_p}\rangle = \langle f_p|_{I_p}\rangle\}}\\
&= 1
\end{align*}
by the Frattini subgroup being the subgroup of nongenerators. Otherwise, we can bound
\begin{align*}
|c(h,f_p)|&\le \frac{\#\{\text{generators of }\langle f_p|_{I_p}\rangle \cap \res_{I_p}(\langle \res_p(h)\rangle^*)\}}{\#\{\text{generators of }\langle f_p|_{I_p}\rangle\}}\,.
\end{align*}
The number of generators of a cyclic group of order $n$ is exactly $\phi(n)$ for $\phi$ the Euler $\phi$-function. The number of generators of a proper cyclic subgroup of order $d\mid n$ is then $\phi(d)\le \phi(n)$, which is strict if and only if $d\ne n$, which implies $|c(h,f_p)|< 1$. Thus
\begin{align*}
b(Q(h,s)) &\le \frac{1}{[F:K]}\sum_{\sigma\in \Gal(F/K)} \frac{1}{|H^0(K_\sigma,T)|}\sum_{f_\sigma\in L_\sigma^{[a]}} 1\\
&=b\,,
\end{align*}
with equality if and only if $f_p|_{I_p}\in \res_{I_p}(\langle\res_p(h)\rangle^*)$ for each $p\not\in S$ and each $f_p\in L_p^{[a]}$. This contributes an asymptotic term
\[
c(h,K,T)X^{1/a}(\log X)^{b-1} + o(X^{1/a}(\log X)^{b-1})\,,
\]
where $c(h,K,T)=0$ if $\res_{I_p}(L_p^{[a]})\not\le \res_{I_p}(\langle \res_p(h)\rangle^*)$ or $a(Q(h,s))>a$. This concludes the proof.
\end{proof}

The asymptotic Wiles theorem then follows from showing that the sum of $c(h,K,T)$ is necessarily positive.

\begin{proof}[Proof of Theorem \ref{thm:asymptoticWiles}]
Let $T' = \langle f_p(I_p) : f_p\in L_p^{[a]}\rangle\le T$. In particular, if we define $\mathcal{L}(T')=(L(T')_p)$ by $L(T')_p = L_p\cap H^1(K_p,T')$ then
\[
H^1_{\mathcal{L}(T')}(K,T';X)\subset H^1_{\mathcal{L}}(K,T;X)\,,
\]
and both $a_{\inv}(\mathcal{L}) = a_{\inv}(\mathcal{L}(T'))$ and $b_{\inv}(\mathcal{L})= b_{\inv}(\mathcal{L}(T'))$ by $H^1(K_p,T')$ containing all the elements $f_p\in L_p$ with $\nu_p(\inv(f_p))$ minimal by construction.

$T'$ is the minimal subgroup with this property, given any nonzero $h\in H^1_{\mathcal{L}(T')(0)^*}(K,(T')^*)$, there exists at least one $\sigma\in \Gal(F/K)$ such that $\res_\sigma(h) \ne 0$, i.e. for any place $p\not\in S$ with $\Leg{F/K}{p}$ conjugate to $\sigma$, $\res_p(h)\ne 0$. All places ramified in $h$ belong to $S$, so $h$ is necessarily unramified at $p$. Taking duals, this implies $\langle \res_\sigma(h)\rangle^* \ne H^1(K_p,T')$ is not everything and $H^1_{ur}(K_p,T') \subset \langle \res_p(h)\rangle^*$ by $\langle \res_p(h)\rangle \le H^1_{ur}(K_p,(T')^*)^*$ and $H^1_{ur}(K_p,(T')^*)^* = H^1_{ur}(K_p,T')$. This implies there is at least one $f_p\in H^1(K_p,T')$ for which $f_p\not\in \langle\res_p(h)\rangle^*$, so necessarily $f_p H^1_{ur}(K_p,T')\cap \langle \res_p(h)\rangle^* = \emptyset$. Thus $f_p|_{I_p}\not\in \res_{I_p}(\langle\res_p(h)\rangle^*)$. By construction, $H^1(I_p,T')$ is generated by $L_p^{[a]}$, which implies by linearity of the Tate pairing that there exists some $f_p\in L_p^{[a]}$ for which $f_p|_{I_p}\not\in \res_{I_p}(\langle \res_p(h)\rangle^*)$. By Proposition \ref{prop:Q}(ii), this implies $c(h,K,T')=0$. Therefore
\begin{align*}
|H^1_{\mathcal{L}}(K,T;X)| \ge |H^1_{\mathcal{L}(T')}(K,T';X)| \sim c(0,K,T')X^{1/a}(\log X)^{b-1}\,.
\end{align*}
Proposition \ref{prop:Q}(i) implies $c(0,K,T')>0$, concluding the proof.
\end{proof}

\newpage

\section{Applications to Number Field Counting}\label{sec:counting}

In Lemma \ref{lem:KC} we proved that the family $(H^1(K_p,T(\pi)))_p$ is Frobenian in $F/K$ with $F$ containing the fixed field of $\ker\pi$ and $\mu_{|T|}$. Conditions (a) and (b) of Theorem \ref{thm:asymptoticWiles} are also trivially satisfied:

\begin{itemize}
\item[(a)]{
This is not really a condition, it is just labeling $S$.
}

\item[(b)]{
This is trivial, as $H^1_{ur}(K_\sigma,T(\pi))\subset H^1(K_\sigma,T(\pi)) = H^1(K_p,T(\pi))$ for any regular place with $\Leg{F/K}{p}$ conjugate to $\sigma$.
}
\end{itemize}

In order to apply Theorem \ref{thm:asymptoticWiles} we only need to check that the $\pi$-discriminant is Frobenian and satisfies condition (c). Unfortunately, it turns out that $\disc_\pi$ need not satisfy condition (c) but instead satisfies something slightly weaker.

\begin{lemma}\label{lem:pidiscFrob}
The $\pi$-discriminant is Frobenian in $F/K=L(\mu_{|T|})/K$, where $L = \overline{K}^{\ker \pi}$ is the fixed field of $\pi$. Moreover, there exists admissible invariants $\disc_\pi^{\uparrow}$ and $\disc_\pi^{\downarrow}$ satisfying the hypotheses of Theorem \ref{thm:asymptoticWiles} such that
\begin{itemize}
\item{
for all $p\not\in S$ and $f\in H^1(K,T(\pi))$,
\[
\nu_p(\disc_\pi^{\downarrow}(f))=\nu_p(\disc_p(f))=\nu_p(\disc_\pi^{\uparrow}(f))
\]
}
\item{
for all $f\in H^1(K,T(\pi))$
\[
\mathcal{N}_{K/\Q}(\disc_\pi^{\downarrow}(f)) \le \mathcal{N}_{K/\Q}(\disc_\pi(f)) \le \mathcal{N}_{K/\Q}(\disc_\pi^{\uparrow}(f)).
\]
}
\end{itemize}
\end{lemma}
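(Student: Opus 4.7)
The proof splits naturally into establishing the Frobenian property for $\disc_\pi$ itself and constructing bracketing invariants $\disc_\pi^{\uparrow}$, $\disc_\pi^{\downarrow}$ that additionally satisfy condition (c) of Theorem~\ref{thm:asymptoticWiles}. For the first part I would take $F = L(\mu_{|T|})$ with $S$ containing all places dividing $|T|\infty$, all places ramified in $F$, and all places ramified in $\pi$. For a regular place $p \notin S$, $p$ is tame and $\pi$ is unramified at $p$, so the formula $\nu_p(\disc_\pi(f)) = n - \#\{\text{orbits of }(f*\pi)(I_p)\}$ collapses to $\ind(f(\tau_p))$, a class function on $T$. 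Combining this with the natural isomorphism of Lemma~\ref{lem:KC} (which identifies $f|_{I_p}$ with an element $f_\sigma \in H^1(K_\sigma, T(\pi))$ for $\sigma = \Leg{F/K}{p}$, defined up to conjugation, under which $\ind$ is invariant) verifies the Frobenian property.

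For the bracketing invariants I would set $\nu_p(\disc_\pi^{\uparrow}) = \nu_p(\disc_\pi^{\downarrow}) = \nu_p(\disc_\pi)$ at $p \notin S$ and, at each $p \in S$, define
\begin{align*}
\nu_p(\disc_\pi^{\uparrow}(f)) &= \max\{\nu_p(\disc_\pi(f')) : f' \in H^1(K_p, T(\pi)),\ \langle f'|_{I_p}\rangle = \langle f|_{I_p}\rangle\},\\
\nu_p(\disc_\pi^{\downarrow}(f)) &= \min\{\nu_p(\disc_\pi(f')) : f' \in H^1(K_p, T(\pi)),\ \langle f'|_{I_p}\rangle = \langle f|_{I_p}\rangle\}.
\end{align*}
Since $T$ is finite, so is $H^1(K_p, T(\pi))$, and both extrema are finite. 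Because $f$ itself is an eligible choice for $f'$, the pointwise inequalities $\nu_p(\disc_\pi^{\downarrow}(f)) \leq \nu_p(\disc_\pi(f)) \leq \nu_p(\disc_\pi^{\uparrow}(f))$ hold at every place, and taking products across all $p$ yields the claimed inequality on norms.

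The last step is to verify the hypotheses of Theorem~\ref{thm:asymptoticWiles}. Admissibility is routine: each local invariant depends only on $f|_{I_p}$ (indeed only on the cyclic subgroup it generates), and $\inv_p(f) = 0$ iff $f|_{I_p} = 1$ for cofinitely many $p$. The modifications take place only at the finite set $S$ of irregular places, so the Frobenian property is preserved (absorbing any extra places into $S$ if necessary). Condition (b) of the theorem is immediate since $L_p = H^1(K_p, T(\pi))$. The point of interest is condition (c): it holds at $p \in S$ by construction, and I would verify it at $p \notin S$ for $\disc_\pi$ itself via the observation that if $\langle f|_{I_p}\rangle = \langle f'|_{I_p}\rangle$ in $H^1(I_p, T(\pi)) = \Hom(I_p, T)$ then $f'(\tau_p) = f(\tau_p)^k$ with $\gcd(k, |f(\tau_p)|) = 1$, so $\langle f(\tau_p)\rangle = \langle f'(\tau_p)\rangle$ as subgroups of $T \subseteq S_n$ and hence give identical orbit structures. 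The main obstacle is this last observation — seeing that at regular places the subgroup $\langle f|_{I_p}\rangle$ already determines the image of inertia in $G$ — but once this is in hand, the remaining admissibility and Frobenian conditions are formal.
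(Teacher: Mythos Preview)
Your proof is correct and follows the same approach as the paper. The only difference is cosmetic: at $p\in S$ the paper sets $\nu_p(\disc_\pi^{\uparrow}(f))$ and $\nu_p(\disc_\pi^{\downarrow}(f))$ to the global maximum and minimum of $\nu_p(\disc_\pi(g))$ over \emph{all} $g\in H^1(K_p,T(\pi))$ (so the value at $p\in S$ is constant in $f$), whereas you restrict the extremum to those $g$ with $\langle g|_{I_p}\rangle=\langle f|_{I_p}\rangle$; your version gives tighter brackets, but either choice satisfies condition (c) and suffices for the lemma.
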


It is the fact that $\disc_\pi$ does not satisfy condition (c) of the Asymptotic Wiles' Theorem that results in Theorm \ref{thm:MalleZ1} only achieving the asymptotic growth rate instead of the main term on the nose (i.e. why the result has $\asymp$ instead of $\sim$). Bounding $\disc_\pi$ above and below by admissible orderings satisfying the hypotheses of Theorem \ref{thm:asymptoticWiles} produces upper and lower bounds for the counting function, and the fact that the bounds agree with $\disc_\pi$ implies that the upper and lower bounds for the counting function have the same order of magnitude.

\begin{proof}[Proof of Lemma \ref{lem:pidiscFrob}]
We first check that $\disc_\pi$ is an admissible ordering. Indeed, the $\pi$-discriminant is defined on crossed homomorphisms and Lemma \ref{lem:pidisc} implies it factors through the quotient by the coboundary relation, so it suffices to prove this condition for crossed homomorphisms. By the definition, $\nu_p(\disc(f*\pi))$ depends only on $(f*\pi)|_{I_p}$, and $\pi$ being fixed implies that $\nu_p(\disc_\pi(f))$ depends only on $f|_{I_p}$. Moreover, for all places $p$ unramified in $\pi$, $\nu_p(\disc(f*\pi))$ depends only on $(f*\pi)|_{I_p} = f|_{I_p}$ and equals zero exactly when $f|_{I_p}=0$.

Next ,we show that $\disc_\pi$ is Frobenian:
\begin{itemize}
\item[(a)]{
By construction, $L$ is the field of definition of $T$. The field of definition of $T^*=\Hom(T,\mu_{|T|})$ is certainly contained in $F=L(\mu_{|T|})$, so that $F$ contains the fields of definition of both $T$ and $T^*$.
}

\item[(b)]{
This is not really a condition, just choose $S$ to be exactly the set of places ramified in $F$ together with all places $p\mid |T|\infty$.
}

\item[(c)]{
This follows from the fact that $S$ contains all primes ramified in $\pi$, which we showed above are the only possible primes which could violate this condition.
}

\item[(d)]{We chose $S$ large enough so that $p\nmid |T|\infty$, so in particular $p$ may only be tamely ramified. As above, we get
\[
\nu_p(\disc(f*\pi)) = \ind(f(\tau_p))\,.
\]
Lemma \ref{lem:KC} implies that this is the same as $\ind(f(\tau))$ for the generator $\tau\in G_m$. This depends only on $\sigma=\Leg{F/K}{p}\equiv m \mod |T|$ (up to conjugation), so we may take $\nu_\sigma(\res_p(f))=\ind(f(\tau))$.
}
\end{itemize}
Next we show that condition (c) of Theorem \ref{thm:asymptoticWiles} is satisfied \emph{at all but finitely many places} $p$, rather than all places. Suppose $p$ is not ramified in $\pi$ and $p\nmid |T|\infty$. Then $p$ must be tamely ramified and
\begin{align*}
\nu_p(\disc_\pi(f))&= \nu_p(\disc(f*\pi))\\
&= \ind(f(\tau_p)\pi(\tau_p))\\
&= \ind(f(\tau_p))\\
&= n - \#\{\text{orbits of }f(\tau_p)\}\,.
\end{align*}
The number of orbits of $x\in S_n$ is the same as the number of orbits of $x^N$ for any $N$ coprime to the order of $x$, and $\langle f|_{I_p}\rangle = \langle f'|_{I_p}\rangle$ implies $f(\tau_p) = f'(\tau_p)^N$ for some $N$ coprime to the order of $f(\tau_p)$. Thus $\ind(f(\tau_p))=\ind(f'(\tau_p))$, and so $\nu_p(\disc(f))=\nu_p(\disc(f'))$.

We define $\disc_\pi^{\uparrow}$ and $\disc_\pi^\downarrow$ by
\begin{align*}
\nu_p(\disc_{\pi}^{\downarrow}(f)) &= \begin{cases}
\nu_p(\disc_\pi(f)) & p\not\in S\\
\ds\min_{g\in H^1(K_p,T(\pi))} \nu_p(\disc_\pi(f)) & p\in S
\end{cases}\\
\nu_p(\disc_{\pi}^{\uparrow}(f)) &= \begin{cases}
\nu_p(\disc_\pi(f)) & p\not\in S\\
\ds\max_{g\in H^1(K_p,T(\pi))} \nu_p(\disc_\pi(f)) & p\in S
\end{cases}
\end{align*}
These satisfy the conditions in the statement of Lemma \ref{lem:pidiscFrob} by construction. The order of $p$ dividing these invariants is constant when $p\in S$, and so automatically satisfies condition (c) of Theorem \ref{thm:asymptoticWiles}, and for $p\not\in S$ these invariants inherit this property from $\disc_\pi$. Because $\disc_\pi$, $\disc_\pi^{\uparrow}$, and $\disc_\pi^{\downarrow}$ agree at all but finitely many places, $\disc_\pi$ being admissible and Frobenian implies $\disc_\pi^{\uparrow}$ and $\disc_\pi^{\downarrow}$ are also admissible Frobenian.
\end{proof}

This implies Theorem \ref{thm:MalleZ1} is a special case of Theorem \ref{thm:asymptoticWiles} (up to a nonzero constant multiple per Lemma \ref{lem:pidisc}) with $L_p=H^1(K_p,T(\pi))$ for all $p$ and $\inv=\disc_\pi^{\uparrow}$ or $\disc_\pi^{\downarrow}$ giving lower and upper bounds respectively for $|H^1(K,T(\pi);X)|$, where it now suffices to check that the $a$- and $b$-invariants agree. For convenience, let $A(T) = \{t\in T : \ind(t)=a(T)\}$. This is the set that determines the $a$- and $b$-invariants.

\begin{lemma}
Let $G\subset S_n$ be a transitive subgroup, $T\normal G$ an abelian $\ell$-group, and $\pi:G_K\rightarrow G$ a homomorphism. If $\mathcal{L}$ is defined by the trivial relations $L_p=H^1(K_p,T(\pi))$ for all places $p$ and $\disc_\pi^\downarrow$ and $\disc_\pi^\uparrow$ are as in Lemma \ref{lem:pidiscFrob} then
\[
a_{\disc_{\pi}}(\mathcal{L})=a_{\disc_\pi^\downarrow}(\mathcal{L}) = a_{\disc_\pi^\uparrow}(\mathcal{L}) = a(T)
\]
and
\[
b_{\disc_{\pi}}(\mathcal{L}) = b_{\disc_\pi^\downarrow}(\mathcal{L}) = b_{\disc_\pi^\uparrow}(\mathcal{L}) = b(K,T(\pi)),
\]
the invariants given by the Malle-Bhargava principle.
\end{lemma}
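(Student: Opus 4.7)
The plan is to reduce this computation to the one already carried out in the proof of Theorem \ref{thm:MBlocal} by exploiting two facts. First, since $\disc_\pi$, $\disc_\pi^\downarrow$, and $\disc_\pi^\uparrow$ agree at every place $p\notin S$, and both invariants $a_{\inv}(\mathcal{L})$ and $b_{\inv}(\mathcal{L})$ are defined as minima and averages over the Frobenian data at regular places (i.e., over $\sigma\in \Gal(F/K)$ via the local models $H^1(K_\sigma,T(\pi))$ from Lemma \ref{lem:KC}), all three orderings yield the same numerical invariants. So it suffices to compute them for $\disc_\pi$ itself. Second, Proposition \ref{prop:localZ1toH1} gives the identity $|Z^1(K_\sigma,T(\pi))|/|T| = |H^1(K_\sigma,T(\pi))|/|H^0(K_\sigma,T)|$ for $T$ abelian, and more generally the same identity holds restricted to the subsets cut out by any fixed value of $\nu_p(\disc_\pi(\cdot))$, because this value depends only on $f(\tau)$ (Lemma \ref{lem:pidiscFrob}) and thus is coboundary-invariant.

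For the $a$-invariant, I would unwind the definition: with $L_p=H^1(K_p,T(\pi))$ and using $\nu_\sigma(\disc_\pi(f))=\ind(f(\tau))$ for $\sigma\in \Gal(F/K)$ as in Lemma \ref{lem:pidiscFrob}(d), one has
\[
a_{\disc_\pi}(\mathcal{L}) = \min_{\sigma\in \Gal(F/K)}\min_{\substack{f\in H^1(K_\sigma,T(\pi))\\f\notin H^1_{ur}}}\ind(f(\tau)).
\]
The argument in Theorem \ref{thm:MBlocal} already shows this minimum is realized by taking $\sigma=1$ and any $t\in T-\{1\}$, giving $a_{\disc_\pi}(\mathcal{L})=\min_{t\neq 1}\ind(t)=a(T)$, as desired.

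For the $b$-invariant, I would substitute the definition and apply the identity above on each $\sigma$-piece:
\[
b_{\disc_\pi}(\mathcal{L}) = \frac{1}{[F:K]}\sum_{\sigma}\frac{|H^1(K_\sigma,T(\pi))^{[a(T)]}|}{|H^0(K_\sigma,T)|} = \frac{1}{[F:K]}\sum_{\sigma}\frac{\#\{f\in Z^1(K_\sigma,T(\pi)): \ind(f(\tau))=a(T)\}}{|T|},
\]
which is precisely the quantity $b(Q)$ appearing in the proof of Theorem \ref{thm:MBlocal}. That proof computes this sum explicitly: one parametrizes $f$ by the pair $(f(\tau),f(\Fr))=(t,y)$ subject to the crossed-homomorphism relation, sums first over $y$ (which contributes either $0$ or $|C_T(c_{\pi(\sigma)}(t))|$), relabels via $t\mapsto c_{\pi(\sigma^{-1})}(t)$, passes to conjugacy classes (trivial here since $T$ is abelian, so classes are singletons), and applies orbit–stabilizer to the twisted action $\pi*\chi^{-1}$ to get $b(K,T(\pi))$. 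Since $T$ is abelian, the passage to conjugacy classes is vacuous, which slightly simplifies but does not alter the argument.

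I do not expect any serious obstacle: the entire content of the lemma is essentially bookkeeping to confirm that the Asymptotic Wiles' Theorem, applied with the trivial family $L_p=H^1(K_p,T(\pi))$ and the $\pi$-discriminant ordering, reproduces the local invariants of the Malle-Bhargava principle computed in Theorem \ref{thm:MBlocal}. The only mildly delicate point is keeping track of the factor $|T|/|H^0(K_\sigma,T)|=|B^1(K_\sigma,T(\pi))|$ that converts a count over $Z^1$ into a normalized count over $H^1$, which is handled uniformly by Proposition \ref{prop:localZ1toH1}.
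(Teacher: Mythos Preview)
Your proposal is correct and reaches the conclusion by essentially the same underlying computation as the paper, but the packaging differs in a way worth noting. The paper argues indirectly via Dirichlet series: it invokes Proposition \ref{prop:Q} to identify $a_{\disc_\pi}(\mathcal{L})$ and $b_{\disc_\pi}(\mathcal{L})$ as the location and order of the rightmost pole of the Euler product $Q(0,s)$, then observes that $Q(0,s)$ agrees up to a finite holomorphic factor with the Malle--Bhargava local series from Theorem \ref{thm:MBlocal} (using Proposition \ref{prop:localZ1toH1} to pass between $Z^1$ and $H^1$), and concludes by uniqueness of the rightmost pole. You instead bypass the Dirichlet series entirely and match the defining formulas for $a_{\inv}(\mathcal{L})$ and $b_{\inv}(\mathcal{L})$ directly against the quantities $a(Q)$ and $b(Q)$ computed in the proof of Theorem \ref{thm:MBlocal}, using the coboundary-invariance of $f(\tau)$ at unramified places to justify the $Z^1$--$H^1$ conversion termwise. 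Your route is slightly more elementary and self-contained (it does not require Proposition \ref{prop:Q}), while the paper's route is more conceptual in that it literally identifies the two Euler products rather than re-examining their internal combinatorics. Both are valid and rest on the same core computation.
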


This proof takes advantage of the fact that we can explicitly realize the product of local factors given in the Malle-Bhargava principle as a summand of the Dirichlet series $H_{\mathcal{L}}(s)$.

\begin{proof}
Lemma \ref{lem:pidiscFrob} states that $\nu_p\disc_\pi^{\downarrow}$, $\nu_p\disc_\pi^{\uparrow}$, and $\nu_p\disc_\pi$ agree at all but finitely many places, implying that
\begin{align*}
a_{\disc_\pi^\downarrow}(\mathcal{L}) &= a_{\disc_\pi^\uparrow}(\mathcal{L}) = a_{\disc_\pi}(\mathcal{L})\text{ and }
b_{\disc_\pi^\downarrow}(\mathcal{L}) = b_{\disc_\pi^\uparrow}(\mathcal{L}) = b_{\disc_\pi}(\mathcal{L}).
\end{align*}
Moreover, Proposition \ref{prop:Q} implies that the series
\begin{align*}
Q(0,s) &= \prod_{p} \frac{1}{|H^1(K_p,T(\pi))|}\sum_{f_p\in H^1(K_p,T(\pi)))} \mathcal{N}_{K/\Q}(\disc_\pi^{\downarrow}(f_p))^{-s}
\end{align*}
has its rightmost pole at $s=1/a_{\disc_\pi}(\mathcal{L})$ of order $b_{\disc_\pi}(\mathcal{L})$. However, Proposition \ref{prop:localZ1toH1}, Theorem \ref{thm:MBlocal}, and Lemma \ref{lem:pidiscFrob} imply that $\nu_p\disc_\pi$ and $\nu_p\disc_\pi^{\downarrow}$ agree at all but finitely many places so that
\begin{align*}
Q(0,s) =& \prod_{p\in S} \left( \frac{1}{|H^0(K_p,T(\pi))|}\sum_{f_p\in H^1(K_p,T(\pi))} \mathcal{N}_{K/\Q}(\disc_\pi(f_p))^{-s}\right)^{-1}\\
&\times \prod_{p\in S} \left( \frac{1}{|H^0(K_p,T(\pi))|}\sum_{f_p\in H^1(K_p,T(\pi))} \mathcal{N}_{K/\Q}(\disc_\pi^{\downarrow}(f_p))^{-s}\right)\\
&\times \prod_{p} \frac{1}{|T|}\sum_{f_p\in Z^1(K_p,T(\pi))} \mathcal{N}_{K/\Q}(\disc_\pi(f_p))^{-s}
\end{align*}
is a product of a holomorphic function (the products over $p\in S$) with a meromorphic function whose rightmost pole is at at $s=1/a(T)$ of order $b(K,T(\pi))$. $Q(0,s)$ can have at most one rightmost pole on the real line, so they must agree concluding the proof.
\end{proof}

This proves Theorem \ref{thm:MalleZ1} as a special case of Theorem \ref{thm:asymptoticWiles}, but these results actually prove a little more. We allowed ourselves to restrict local conditions at finitely many irregular places and still obtained the same order of magnitude. We can restrict local conditions at infinitely many local places as well, so long as the splitting behavior is only restricted at finitely many places, and the order of magnitude of the main term will then be given by the $a$- and $b$-invariants in Theorem \ref{thm:asymptoticWiles}.

\subsection{Counting $(T\normal G)$-towers for $T$ abelian}

We can now perform an inclusion-exclusion argument to prove the same asymptotic for $N(L/K,T\normal G;X)$ as stated in Corollary \ref{cor:alpha}. Again, we will realize this as a special case of a more general result:

\begin{theorem}\label{thm:ratio}
Let $G\subset S_n$ be a transitive subgroup, $T\normal G$ an abelian normal subgroup, $(L/K,\iota_B)$ a fixed $B$-extension with $\iota_B:\Gal(L/K)\xrightarrow{\sim} B$, $\pi:G_K\rightarrow G$ a (not necessarily surjective) homomorphism satisfying $\pi(G_K)T=G$, and $\mathcal{L}$ and $\inv$ satisfy the hypotheses of Theorem \ref{thm:asymptoticWiles}. Then the function
\[
F_{\mathcal{L},\inv}(X) = \frac{|\{f\in H^1_{\mathcal{L},\inv}(K,T(\pi);X) : f*\pi \text{ surjective}\}|}{|H^1_{\mathcal{L},\inv}(K,T(\pi);X)|}
\]
is bounded between $[0,1]$ as $X\to\infty$. Moreover, we get the following special cases:
\begin{enumerate}
\item[(i)]{
If $\pi$ is surjective then $\liminf_{X\to\infty} F(X) > 0$.
}
\item[(ii)]{
If
\[
T = \langle f(I_p) : f\in L_p, p\not\in S\rangle
\]
then $\liminf_{X\to \infty} F(X) >0$.
}
\item[(iii)]{
If
\[
T = \langle f(I_p) : f\in L_p^{[a_{\inv}(\mathcal{L})]},p\not\in S\rangle
\]
then $\lim_{X\to\infty} F(X) = 1$.
}
\end{enumerate}
\end{theorem}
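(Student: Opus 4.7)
The plan is to reformulate surjectivity of $f*\pi$ in terms of proper $G$-submodules of $T$, then to bound the non-surjective contribution using Theorem \ref{thm:asymptoticWiles}. The bound $F_{\mathcal{L},\inv}(X)\in[0,1]$ is immediate as a ratio of subsets; the lower bounds in (i)--(iii) are the content. First, because $(f*\pi)(G_K)\cdot T=\pi(G_K)\cdot T=G$, the homomorphism $f*\pi$ always surjects onto $B=G/T$, so $f*\pi$ fails to be surjective if and only if $\mathrm{im}(f*\pi)\subseteq H$ for some proper $H<G$ with $HT=G$. For such $H$, the subgroup $M:=H\cap T$ is a proper subgroup of $T$, and since $T$ is abelian and $HT=G$ acts on $M$ by conjugation, $M$ is a proper $G$-submodule of $T$. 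Conversely, each pair $(M,c)$ consisting of a proper $G$-submodule $M$ of $T$ and a complement $c\colon B\to G/M$ of $T/M$ in $G/M$ determines such an $H$, and the containment $\mathrm{im}(f*\pi)\subseteq H$ is equivalent to $f\bmod M$ equaling a specific cocycle $\tilde f_{M,c}\in Z^1(K,T/M)$ built from $\pi$ and $c$.

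Second, I would apply Theorem \ref{thm:asymptoticWiles} to each piece. For each $(M,c)$, the set of $[f]\in H^1_{\mathcal{L}}(K,T(\pi))$ with $f\bmod M=\tilde f_{M,c}$ is an affine translate of $H^1_{\mathcal{L}(M,c)}(K,M(\pi))$ for an induced Frobenian local condition $\mathcal{L}(M,c)$, and its intersection with $\{\inv<X\}$ has cardinality $O(X^{1/a_M}(\log X)^{b_M-1})$ with $a_M\ge a$ because $M\subseteq T$. Summing over the finitely many pairs $(M,c)$ yields an upper bound on the non-surjective count, and dividing by $|H^1_{\mathcal{L}}(K,T(\pi);X)|\sim c\,X^{1/a}(\log X)^{b-1}$ with $c>0$ from Theorem \ref{thm:asymptoticWiles} bounds $1-F(X)$ from above.

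Finally, I would carry out the three-case analysis. For (iii), the spanning hypothesis $T=\langle f(I_p):f\in L_p^{[a]},\,p\not\in S\rangle$ forces that for every proper $G$-submodule $M$, either $a_M>a$ strictly, or $a_M=a$ and $b_M<b$ strictly: if both equalities held, then at a positive density of Frobenius classes $\sigma$ the entire $L_\sigma^{[a]}$ would restrict into $H^1(K_\sigma,M(\pi))$, and via Chebotarev this would contradict the spanning hypothesis. Hence the non-surjective count is $o(X^{1/a}(\log X)^{b-1})$ and $F(X)\to 1$. For (ii), the weaker hypothesis $T=\langle f(I_p):f\in L_p,\,p\not\in S\rangle$ permits patching finitely many local cocycles $f_i\in L_{p_i}$ whose inertia images $f_i(I_{p_i})$ jointly generate $T$ into a global witness $f^*\in H^1_{\mathcal{L}}(K,T(\pi))$ with $f^**\pi$ surjective; a positive-density subfamily of $H^1_{\mathcal{L}}(K,T(\pi);X)$ obtained by perturbing $f^*$ at auxiliary unramified places preserves surjectivity, giving $\liminf F>0$. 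For (i), $\pi$ surjective makes $f^*=0$ itself a witness, and the same perturbation argument applies. The main obstacle is the strict-inequality claim in case (iii): the denominators $|H^0(K_\sigma,M(\pi))|$ and $|H^0(K_\sigma,T(\pi))|$ in the Frobenian averages $b_M$ and $b$ differ, preventing a direct pointwise comparison, so one must use the spanning hypothesis together with the Frobenian structure to extract a strict sum-level inequality.
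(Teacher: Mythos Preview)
Your approach to part~(iii) is essentially the paper's: decompose the non-surjective locus by proper subgroups $H<G$ with $HT=G$, identify the fiber with a Selmer group for the submodule $M=H\cap T$, and show the $(a,b)$-invariants are strictly worse unless $M=T$. The denominator issue you flag is real but is sidestepped in the paper by pushing forward along $i_*\colon H^1(K,(H\cap T)(\pi))\to H^1(K,T(\pi))$ and computing the $b$-invariant of the pushed-forward family $i_*\mathcal{L}(H\cap T)$ \emph{inside} $T$, so both averages have the common denominator $|H^0(K_\sigma,T(\pi))|$ and the comparison $|i_*L(H\cap T)_\sigma^{[a]}|\le|L_\sigma^{[a]}|$ is pointwise. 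One technical point you omit: after translating by a fixed $f_H$ the resulting invariant $\inv_H(f)=\inv(f*f_H)$ need not satisfy condition~(c) of Theorem~\ref{thm:asymptoticWiles} at the finitely many places where $f_H$ is ramified, so one must sandwich by $\inv_H^{\uparrow},\inv_H^{\downarrow}$ as in Lemma~\ref{lem:pidiscFrob}.

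Your treatment of (i) and (ii) diverges from the paper and has a genuine gap. For~(i) your ``perturbation'' is the right idea; the paper makes it precise by enlarging $S$ via Chebotarev so that $\{\pi(\Fr_p):p\in S\}=G$, then imposing the Selmer condition $\res_p(f)=0$ for $p\in S$. This subfamily $H^1_{\mathcal{L}_\pi}$ has the same $(a,b)$-invariants and every element satisfies $(f*\pi)(\Fr_p)=\pi(\Fr_p)$, forcing surjectivity. For~(ii) your ``patching'' step---producing a global $f^*$ with prescribed $\res_{p_i}(f^*)=f_i$---is not justified; this is a Grunwald--Wang--type existence statement that does not come for free. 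The paper avoids it entirely: it observes that the hypothesis of~(ii) for $(\mathcal{L},\inv)$ is exactly the hypothesis of~(iii) for $(\mathcal{L},\ram^S)$, since $L_p^{[1]}=L_p\setminus H^1_{ur}$ under the product-of-ramified-primes ordering. Part~(iii) then furnishes a surjective $f_G*\pi$, and translating by $f_G$ reduces~(ii) to~(i). This reduction chain $(ii)\Rightarrow(iii)\text{ for }\ram^S\Rightarrow\text{existence}\Rightarrow(i)$ is the missing ingredient in your argument.
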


Suppose we take the trivial local conditions $L_p=H^1(K_p,T)$. We noted in the proof of Theorem \ref{thm:MBlocal} that for any $t\in T$ and $p\not\in S$ such that $\Fr_p(F/K)=1$ then there exists an $f\in H^1(K_p,T)$ such that $f(\tau_p)=t$. This implies
\[
T = \bigcup_{\substack{p\not\in S\\ f\in L_p}} f(I_p)\,,
\]
satisfying the conditions in part (ii). Again using Lemma \ref{lem:pidiscFrob} to get the bounds $\disc_\pi^{\downarrow}$ and $\disc_\pi^\uparrow$ which satisfy the hypotheses of Theorem \ref{thm:asymptoticWiles}, we realize Corollary \ref{cor:alpha} as a consequence of Theorem \ref{thm:ratio} by
\begin{align*}
N(L/K,T\normal G;X) &= |T/T^G| \cdot |\{f\in H^1_{\mathcal{L},\disc_\pi}(K,T(\pi);X) : f*\pi \text{ surjective}\}|\\
&\ge |T/T^G| \cdot |\{f\in H^1_{\mathcal{L},\disc_\pi^\uparrow}(K,T(\pi);X) : f*\pi \text{ surjective}\}|\\
&\ge (\liminf_{X\to\infty} F(X)) |T/T^G| |H^1_{\disc_\pi^\uparrow}(K,T(\pi);X)|\\
&\gg X^{1/a(T)}(\log X)^{b(K,T(\pi))-1}.
\end{align*}

Corollary \ref{cor:ram} also follows from Theorem \ref{thm:ratio} as $\ram$ satisfies the conditions of Theorem \ref{thm:asymptoticWiles} with $a=1$ and $b=\#\left(T-\{1\}/\pi*\chi^{-1}\right)$, as well as case (iii) of Theorem \ref{thm:ratio}.

\textbf{Remark:} It is reasonable to expect that $\lim_{X\to\infty} F(X)=1$ in all cases, but we do not achieve this result due to the same limitation in Wiles' theorem which requires condition (c) in Theorem \ref{thm:asymptoticWiles}. Indeed, the proof in cases (i) and (ii) rely on using several invariants which only satisfy condition (c) at all but finitely many places, and so are bounded in the same way as Lemma \ref{lem:pidiscFrob}.

\begin{proof}[Proof of Theorem \ref{thm:ratio}]
Theorem \ref{thm:asymptoticWiles} tells us there exists a positive constant $c_{\inv}(\mathcal{L})$ such that
\begin{align*}
|H^1_{\mathcal{L},\inv}(K,T(\pi);X)| &\sim c_{\inv}(\mathcal{L}) X^{1/a_{\inv}(\mathcal{L})}(\log X)^{b_{\inv}(\mathcal{L})-1}.
\end{align*}

\textbf{Part (i):} Enlarge $S$ to contain a finite set of places $p$ such that $\{\pi(\Fr_p): p\in S\} = G$, which exists by Chebotarev density. Let $\mathcal{L}_\pi$ be defined by
\[
(L_\pi)_p = \begin{cases}
0 & p\in S\\
L_p & \text{else}\,.
\end{cases}
\]
Then the map $f\mapsto f*\pi$ gives a bijection
\[
H^1_{\mathcal{L}_\pi}(K,T(\pi)) \leftrightarrow \{ f\in H^1_{\mathcal{L}}(K,T(\pi)) : \forall p\in S,\ f|_{G_{K_p}}=\pi|_{G_{K_p}}\}\,.
\]
Moreover, for any $f\in H^1_{\mathcal{L}_\pi}(K,T(\pi))$ it necessarily follows that
\begin{align*}
G &= \{\pi(\Fr_p): p\in S\}\\
&=\{(f*\pi)(\Fr_p): p\in S\}\\
&\le (f*\pi)(G_K)\,,
\end{align*}
which implies the map $f\mapsto f*\pi$ induces an inclusion
\[
H^1_{\mathcal{L}_\pi}(K,T(\pi)) \hookrightarrow \{f\in H^1_{\mathcal{L},\inv}(K,T(\pi);X) : f*\pi \text{ surjective}\}\,,
\]
which implies
\begin{align*}
\frac{|\{f\in H^1_{\mathcal{L},\inv}(K,T(\pi);X) : f*\pi \text{ surjective}\}|}{|H^1_{\mathcal{L},\inv}(K,T(\pi);X)|} &\ge \frac{|H^1_{\mathcal{L}_\pi,\inv}(K,T(\pi);X)|}{|H^1_{\mathcal{L},\inv}(K,T(\pi);X)|}\,.
\end{align*}
$\mathcal{L}_\pi$ and $\mathcal{L}$ agree at all but finitely many places, which implies that their $a$- and $b$-invariants are the same. In particular this implies
\begin{align*}
\liminf_{X\rightarrow\infty} \frac{|\{f\in H^1_{\mathcal{L},\inv}(K,T(\pi);X) : f*\pi \text{ surjective}\}|}{|H^1_{\mathcal{L},\inv}(K,T(\pi);X)|} &\ge \frac{c_{\inv}(\mathcal{L}_\pi)}{c_{\inv}(\mathcal{L})} >0\,.
\end{align*}

\textbf{Part (ii):} We will prove that (iii) implies (ii), so that it suffices to prove (iii). Consider the invariant given by the product of ramified places outside of $S$,
\[
\ram^S(f) = \prod_{\substack{p\not\in S\\ f(I_p)\ne 1}}p\,. 
\]
This trivially satisfies the hypotheses of Theorem \ref{thm:asymptoticWiles} with $a_{\inv}(\mathcal{L})=1$ and for all $p\not\in S$, $L_p^{[1]}=L_p-H^1_{ur}(K_p,T)$, so that in particular $\res_{I_p}(L_p^{[1]}) = \res_{I_p}(L_p)-\{0\}$. This implies
\[
\langle f(I_p) : f\in L_p, p\not\in S\rangle = \langle f(I_p): f\in L_p^{[1]}, p\not\in S\rangle\,,
\]
so that whenever $(\mathcal{L},\inv)$ falls under case (ii) necessarily $(\mathcal{L},\ram^S)$ falls under case (iii). By assumption, part (iii) implies that there exists at least one $f_G$ such that $f_G*\pi$ is surjective. This implies that there exists a surjective solution to the embedding problem. For $\inv_{f_G}(f)=\inv(f*f_G)$ we define $\inv_{f_G}^\uparrow$ (and similarly $\inv_{f_G}^\downarrow$)
\[
\nu_p(\inv_{f_G}^{\uparrow}(f)) = \begin{cases}
\nu_p(\inv_{f_G}(f)) & p\in S\text{ or }f_G\text{ ramified at }p\\
\max_{g\in H^1(K_p,T(\pi))} \nu_p(\inv_{f_G}(g)) & \text{else}
\end{cases}
\]
as in Lemma \ref{lem:pidiscFrob} which agrees with $\inv_{f_G}$ at all but finitely many places (and thus has the same $a$- and $b$-invariants) and satisfies the conditions of Theorem \ref{thm:asymptoticWiles}. In particular, ordering the Selmer group by $\inv_{f_G}^\uparrow$ or $\inv_{f_G}^\downarrow$ produces the same asymptotic growth rate in Theorem \ref{thm:asymptoticWiles}.

$T$ abelian implies that $T(f_G*\pi) = T(\pi)$ is the same Galois module. This implies that the map $f\mapsto f*f_G$ induces an inclusion
\begin{align*}
\{f\in H^1_{\mathcal{L},\inv_{f_G}}(K,T(f_G*\pi);X) : f*(f_G*\pi) \text{ surjective}\}\hookrightarrow \{f\in H^1_{\mathcal{L},\inv}(K,T(\pi);X) : f*\pi \text{ surjective}\}\,.
\end{align*}
We also remark that $f_G$ is unramified at all but finitely many places, so $\nu_p(\inv(f))=\nu_p(\inv_{f_G}(f))$ for all but finitely many places. By definition, this implies
\begin{align*}
a_{\inv}(\mathcal{L}) &=a_{\inv_{f_G}}(\mathcal{L}) & b_{\inv}(\mathcal{L}) &= b_{\inv_{f_G}}(\mathcal{L})\,.
\end{align*}
Thus applying Theorem \ref{thm:asymptoticWiles} to the bounds $\inv_{f_G}^\downarrow$ and $\inv_{f_G}^\uparrow$ implies $|H^1_{\mathcal{L},\inv_{f_G}}(K,T(f_G*\pi);X)| \asymp |H^1_{\mathcal{L},\inv}(K,T(\pi);X)|$. Therefore
\begin{align*}
\liminf_{X\to\infty} F_{\mathcal{L},\inv}(X) &\ge \liminf_{X\to\infty}\frac{|\{f\in H^1_{\mathcal{L},\inv_{f_G}^\uparrow}(K,T(f_G*\pi);X) : f*(f_G*\pi) \text{ surjective}\}|}{|H^1_{\mathcal{L},\inv_{f_G}}(K,T(\pi);X)|}\\
& \ge \frac{|\{f\in H^1_{\mathcal{L},\inv_{f_G}^{\uparrow}}(K,T(f_G*\pi);X) : f*(f_G*\pi) \text{ surjective}\}|}{|H^1_{\mathcal{L},\inv_{f_G}^\downarrow}(K,T(f_G*\pi);X)|}\,.
\end{align*}
Noting that orderings $\inv_{f_G}^\uparrow$ and $\inv_{f_G}^\downarrow$ give the same order of magnitude in Theorem \ref{thm:asymptoticWiles} this implies
\begin{align*}
\liminf_{X\to\infty} F_{\mathcal{L},\inv}(X) &\gg \liminf_{X\to\infty}  \frac{|\{f\in H^1_{\mathcal{L},\inv_{f_G}^{\uparrow}}(K,T(f_G*\pi);X) : f*(f_G*\pi) \text{ surjective}\}|}{|H^1_{\mathcal{L},\inv_{f_G}^\uparrow}(K,T(f_G*\pi);X)|},
\end{align*}
which is positive by part (i) as we chose $f_G*\pi$ to be surjective.

\textbf{Part (iii):} We note that $f(G_K)\le T$ implies $f*\pi \equiv \pi \mod T$. Thus $\pi(G_K)T=G$ implies $(f*\pi)(G_K)T=G$. We can partition $H^1_{\mathcal{L}}(K,T(\pi))$ based on the image of $f*\pi$, which under the coboundary relation is well-defined up to $T$-conjugacy. For any $T$-conjugacy class $H$ with $HT=G$ suppose that there exists at least one $f_H$ such that $(f_H*\pi)(G_K)\subset H$. Then we claim that the map $f\mapsto f*f_H^{-1}$ induces a bijection
\[
\left\{f\in H^1_{\mathcal{L}}(K,T(\pi)) : (f*\pi)(G_K)\subset H \right\} \leftrightarrow i_*H^1_{\mathcal{L}(H\cap T)}(K,(H\cap T)(\pi))\,,
\]
where $\mathcal{L}(H\cap T)$ is defined by $L(H\cap T)_p=i_*^{-1}(L_p)$ and $i_*$ is the pushforward along the inclusion map $i_*:H\cap T\hookrightarrow T$. In order to check that $(H\cap T)(\pi)$ is well-defined as a Galois module, it suffices to show that $H\cap T\normal G$ as the action factors through conjugation by $G$ and $c_t(H)\cap T = c_t(H\cap T)$ for a different representative of the $T$-conjugacy class $H$. $T\normal G$ implies that $H\cap T\normal H$ for any representative of the $T$-conjugacy class of subgroups $H$, and $T$ abelian implies $H\cap T\normal T$ so that $H\cap T\normal HT=G$. Next, we remark that for any $f$ a crossed homomorphism representative on the left hand side
\[
(f*f_H^{-1})(G_K)\subset T
\]
by definition, and
\begin{align*}
(f*f_H^{-1})(G_K)&\subset (f*\pi*\pi^{-1}*f_H^{-1})(G_K)\\
&\subset (f*\pi)(G_K) (f_H*\pi)(G_K)^{-1}\\
&\subset H\,,
\end{align*}
which implies $(f*f_H^{-1})(G_K)\subset H\cap T$. For the reverse inclusion we remark that the inverse map $f\mapsto f*f_H$ satisfies
\begin{align*}
(f*f_H*\pi)(G_K)&\subset f(G_K) (f_H*\pi)(G_K)\\
&\subset(H\cap T)H\\
&= H\,.
\end{align*}
Modding out by the coboundaries with coefficients in $T$ instead of $H\cap T$ requires including the pushfoward map $i_*$.

For each $H$ fix a choice of $f_H$, so that we define $\inv_H(f)=\inv(f*f_H^{-1})$ (for example, we can choose $f_G=1$ and note that this is a different choice than in part (ii)). Then, noting that $i_*$ has finite kernel, it follows that
\begin{align*}
|\{f\in H^1_{\mathcal{L}}(K,T(\pi);X) : (f*\pi)(G_K)\subset H\}| &\le |H^1_{\mathcal{L}(H\cap T),\inv_H}(K,T(\pi);X)|\\
&\le |H^1_{\mathcal{L}(H\cap T),\inv_H^{\downarrow}}(K,T(\pi);X)|
\end{align*}
where $\inv_H^\downarrow$ is a bound on the ordering define as in Lemma \ref{lem:pidiscFrob} to satisfy the hypotheses of Theorem \ref{thm:asymptoticWiles}.

Suppose $H\le G$ is such that $a_{\inv}(\mathcal{L}(H\cap T)) = a_{\inv}(\mathcal{L})$. Noting that $\inv$, $\inv_H$ and $\inv_H^{\downarrow}$ agree at all but finitely many places (i.e. all the places unramified in $f_H$), we find that their $a$- and $b$-invariants are necessarily the same on both $\mathcal{L}$ and $i_*\mathcal{L}(H\cap T)$. By definition
\begin{align*}
b_{\inv}(\mathcal{L}(H\cap T)) &= \frac{1}{[F:K]}\sum_{\sigma\in \Gal(F/K)}\frac{|i_*L(H\cap T)_\sigma^{[a_{\inv}(\mathcal{L})]}|}{|H^0(K_\sigma,T(\pi))|}\\
&\le \frac{1}{[F:K]}\sum_{\sigma\in \Gal(F/K)}\frac{|L_\sigma^{[a_{\inv}(\mathcal{L})]}|}{|H^0(K_\sigma,T(\pi))|}\\
&=b_{\inv}(\mathcal{L})\,,
\end{align*}
with equality if and only if
\[
|L(H \cap T)_\sigma^{[a_{\inv}(\mathcal{L})]}|=|L_\sigma^{[a_{\inv}(\mathcal{L})]}|
\]
for all $\sigma\subset \Gal(F/K)$. By definition, $i_*L(H\cap T)_\sigma=i_*i_*^{-1}(L_\sigma)\subset L_{\sigma}$ so
\[
i_*L(H\cap T)_\sigma^{[a_{\inv}(\mathcal{L})]}\subset L_\sigma^{[a_{\inv}(\mathcal{L})]}\,.
\]
Suppose these are equal for each $\sigma \in \Gal(F/K)$. This implies that for every $p\not\in S$ and $f\in L_p^{[a_{\inv}(\mathcal{L})]}$ that $f(I_p)\subset H$ up to $T$-conjugacy (as this is true for all elements of $i_*L(H\cap T)_p$ by construction), which implies
\[
\langle f(I_p) : f\in L_p^{[a_{\inv}(\mathcal{L})]}, p\not\in S \rangle \le H\cap T\,.
\]
By assumption for part (iii) this implies $T=H\cap T$, so that $HT=G$ and the second isomorphism theorem implies
\begin{align*}
|G| &= |T|[G:T]\\
&= |T| [HT: T]\\
&= |T| [H: H\cap T]\\
&= |T| \frac{|H|}{|H\cap T|}\\
& = |H|\,,
\end{align*}
i.e. $H=G$. This implies that all terms with $H\ne G$ necessarily satisfy
\[
|H^1_{\mathcal{L}(H\cap T),\inv_H^\downarrow}(K,(H\cap T)(\pi);X)| = o(X^{1/a_\inv(\mathcal{L})} (\log X)^{b_{\inv}(\mathcal{L})-1-\epsilon})\,.
\]
Thus it follows that
\begin{align*}
&|\{f\in H^1_{\mathcal{L},\inv}(K,T(\pi);X) : f*\pi \text{ surjective}\}|\\
& = |H^1_{\mathcal{L},\inv}(K,T(\pi);X)| - \sum_{\substack{H< G\\HT=G}}|\{f\in H^1_{\mathcal{L},\inv}(K,T(\pi);X) : (f*\pi)(G_K)\subset H \}|\\
&= |H^1_{\mathcal{L},\inv}(K,T(\pi);X)| + o(X^{1/a_\inv(\mathcal{L})} (\log X)^{b_{\inv}(\mathcal{L})-1-\epsilon}).
\end{align*}
Theorem \ref{thm:asymptoticWiles} implies the little-oh term grows strictly slower that $|H^1_{\mathcal{L},\inv}(K,T(\pi);X)|$, concluding the proof.
\end{proof}

\subsection{Lower Bounds for Malle's Conjecture}

We are now ready to prove the lower bounds for Malle's conjecture proper detailed in the introduction.
\begin{proof}[Proof of Corollary \ref{cor:lowerbound}]
If there exists a $G$-extension given by $\pi$, then we fix the subextension $L/K$ fixed by $T$. Then Corollary \ref{cor:alpha} implies
\[
N(K,G;X) \gg N(L/K,T\normal G;X) \gg X^{1/a(T)}(\log X)^{b(K,T(\pi))-1}.
\]
If there exists a $t\in T$ with $\ind(t)=a(G)$, then minimality implies $a(G)=a(T)$ and $b(K,T(\pi))\ge 1$ implies
\[
N(K,G;X) \gg X^{1/a(G)}.
\]
If $A(G) \subset T$, then $A(G)=A(T)$ and Turkelli's modification to Malle's conjecture asserts that
\begin{align*}
B(K,G) &= \max_{\substack{N\normal G\\N\normal T\\ a(N)=a(T)} }\max_{\substack{\varphi:G_K\rightarrow G\\ \varphi \equiv \pi \mod N} }b(K,N(\varphi))\\
&=\max_{\substack{\pi:G_K\rightarrow G\\ \pi(G_K)T=G} }b(K,\langle A(T)\rangle(\pi))\,,
\end{align*}
noting that $T$ is abelian, so the number of orbits is maximal when the conjugacy classes of $A(T)$ are of minimal size, i.e. are trivial in the abelian subgroup $\langle A(T)\rangle\le T$. For abelian groups, the conjugacy classes of $\langle A(T)\rangle$ and $T$ are the same, which implies
\begin{align*}
B(K,G) = \max_{\substack{\pi:G_K\rightarrow G\\ \pi(G_K)T=G} }b(K,T(\pi))\,.
\end{align*}
Choosing some $\pi$ that achieves this maximum corresponding to a $B$-extension $L/K$ implies $b(K,T(\pi))=B(K,G)$ and
\[
N(K,G;X) \gg N(L/K,T\normal G;X) \gg X^{1/a(G)}(\log X)^{B(K,G)-1}.
\]
\end{proof}
Corollary \ref{cor:solvable} then follows immediately by choosing $g\in G$ which commutes with its conjugates such that $\ind(g)=a$ and setting $T=\langle c_x(g) : x\in G\rangle$. This is an abelian group with $a=a(T)$. The result then follows from Corollary \ref{cor:lowerbound}.

\newpage

\appendix

\section{Wright's Methods and the Fundamental Class}\label{app}

An alternative approach the the question of counting $(T\normal G)$-towers was described to me by Melanie Matchett Wood, as an adaptation of Wright's more classical proof of Malle's original conjecture for abelian groups using class field theory. In this appendix we summarize this argument, and discuss the major different benefits of this approach versus the approach using Wiles' theorem in the main body of this paper.

\subsection{Counting towers of number fields}

The Galois groups of a tower of number fields $F/L/K$ fit into a short exact sequence
\[
\begin{tikzcd}
1 \rar & \Gal(F/L) \rar & \Gal(F/K) \rar & \Gal(L/K) \rar & 1.
\end{tikzcd}
\]
If $F/L/K$ is a $(T\normal G)$-tower with $T$ abelian and $B:=G/T$, then this short exact sequence belongs to some extension class $v\in H^2(B,T)$ using the description of $H^2$ as the group of extensions of $B$ by $T$ (we recall that when $T$ is abelian, the action of $G$ on $T$ by conjugation factors through $B$).\cite[Theorem 11.5]{cassels-frohlich2010} states that there exists a fundamental class $u_{L/K}\in H^2(B,J_L)$, where $J_L$ is the idele class group, such that for any $B$-invariant homomorphism $\phi\in \Hom_B(J_L,T)$ defining an abelian extension $F/L$ the extension class of $F/L/K$ is given by $\phi_*(u_{L/K})\in H^2(B,T)$.

Let $S$ be a $B$-invariant set of places of $L$ which generate the idele class group, so that by class field theory there is a short exact sequence
\[
\begin{tikzcd}
1 \rar & \mathcal{O}_S^\times \rar & \prod_{p\in S} L_p^\times \times \prod_{p\not\in S} \mathcal{O}_{L_p}^{\times} \rar & J_L \rar & 1.
\end{tikzcd}
\]
Fix a lift $u\in C^2(B,J_L)$ of $u_{L/K}$ and define the homomorphism
\[
\Psi:\Hom_B\left(\prod_{p\in S} L_p^\times \times \prod_{p\not\in S} \mathcal{O}_{L_p}^{\times},T\right) \rightarrow \Hom(\mathcal{O}_S^\times,T) \times C^2(B,T)
\]
by restriction in the first coordinate and $\phi\mapsto \phi_*(u)$ in the second coordinate. The generating function of towers $F/L/K$ with extension class $v$ is then given by
\[
\sum_{\substack{w\in C^2(B,T)\\ w\equiv v}}\left(\sum_{\substack{\phi\in \Hom_B\left(\prod_{p\in S} L_p^\times \times \prod_{p\not\in S} \mathcal{O}_{L_p}^{\times},T\right)\\ \Psi(\phi)=(1,w)}} \mathcal{N}_{L/\Q}(\disc(\phi))^{-s}\right).
\]
The group $A=\Hom(\mathcal{O}_S^\times,T) \times C^2(B,T)$ is a finite abelian group, so that we can take a sum over characters as is done in \cite{wright1989,wood2009} to get
\[
\sum_{\substack{w\in C^2(B,T)\\ w\equiv v}}\sum_{\chi\in A^{\vee}}\left(\sum_{\phi\in \Hom_B\left(\prod_{p\in S} L_p^\times \times \prod_{p\not\in S} \mathcal{O}_{L_p}^{\times},T\right)} \chi(\Psi(\phi)(1,w)^{-1})\mathcal{N}_{L/\Q}(\disc(\phi))^{-s}\right).
\]
The multiplicativity of $\chi$, $\disc$, and the fact that
\[
\Hom_B\left(\prod_{p\in S} L_p^\times \times \prod_{p\not\in S} \mathcal{O}_{L_p}^{\times},T\right) = \prod_{\substack{\ell\in \overline{S}\\\text{places of }K}} \Hom_B\left(\prod_{p\mid \ell}L_p^{\times},T\right) \prod_{\substack{\ell\not\in \overline{S}\\\text{places of }K}}\Hom_B\left(\prod_{p\mid\ell} \mathcal{O}_{L_p}^{\times},T\right)
\]
Implies that this inner-most sum factors as an Euler product over the places $\ell$ of $K$
\begin{align*}
Q(w,\chi,s) =& \prod_{\ell\in \overline{S}} \left(\sum_{\phi_\ell\in\Hom_B\left(\prod_{p\mid \ell} L_p^\times,T\right)} \chi(\Psi(\phi_\ell)(1,w)^{-1})\mathcal{N}_{L/\Q}(\disc(\phi))^{-s}\right)\\
&\cdot\prod_{\ell\not\in \overline{S}} \left(\sum_{\phi_\ell\in\Hom_B\left(\prod_{p\mid \ell} \mathcal{O}_{L_p}^\times,T\right)} \chi(\Psi(\phi_\ell)(1,w)^{-1})\mathcal{N}_{L/\Q}(\disc(\phi))^{-s}\right).
\end{align*}
By applying Proposition \ref{prop:GeneralDirichlet} in the same way as the main body of the paper, we find that each Euler product has a rightmost pole on the real line (where checking that the Euler factors are Frobenian is similar to the proof of Theorem \ref{thm:MBlocal}). It then suffices to compute the location and order of the rightmost poles of each Euler factor using Proposition \ref{prop:GeneralDirichlet} and to show that the poles do not cancel when added together. The computation of the location and order of the poles is similar to the computations in \cite{wright1989,wood2009}, so we omit the process from this appendix.

In order to show that the poles do not cancel, it suffices to find a lower bound for the counting function of the same order of magnitude. Indeed, for each $w\in C^2(B,T)$ consider a $B$-invariant set of places $S'$ containing $S$ such that $\Psi$ restricted to
\[
\Hom_B\left(\prod_{p\in S} L_p^\times \times \prod_{p\in S'\setminus S} \mathcal{O}_{L_p}^{\times},T\right)
\]
is surjective. This is guaranteed to exist by $\Psi$ having a finite range. \emph{If we assume} that $(1,w)$ is in the image of $\Psi$ (this is equivalent to assuming the existence of a solution to the embedding problem), then for every $\phi$ we choose some
\[
\gamma_{\phi,S'} \in \Hom_B\left(\prod_{p\in S} L_p^\times \times \prod_{p\in S'\setminus S} \mathcal{O}_{L_p}^{\times},T\right)
\]
such that $\Psi(\gamma_{\phi,S'}) = (1,w)\Psi(\phi)^{-1}$. Consider the subset of homomorphisms given by
\[
U:=\{\phi\gamma_{\phi,S'} : \phi \in \Hom_B\left(\prod_{p\in S} L_p^\times \times \prod_{p\not\in S} \mathcal{O}_{L_p}^{\times},T\right).
\]
Then by construction $\Psi(U)=\{(1,w)\}$. Moreover, if we modify the discriminant so that
\[
\nu_p(\disc_{S'}(\phi)) = \begin{cases}
\nu_p(\disc(\phi)) & p\not\in S'\\
\max_{\gamma\in \prod_{p\in S'}\Hom(L_p^\times,T)} \nu_p(\disc(\gamma)) & p\in S'
\end{cases}
\]
it follows that $\disc_{S'}(\phi\gamma_{\phi,S'})=\disc_{S'}(\phi)$ the counting function
\[
\#\{\kappa \in U: \mathcal{N}_{L/\Q}(\disc_{S'}(\kappa))<X\}
\]
is a lower bound for the number of towers with bounded discriminant. The corresponding generating function is a single Euler product given by
\[
\prod_{\ell\not\in \overline{S'}} \left(\sum_{\phi_\ell\in\Hom_B\left(\prod_{p\mid \ell} \mathcal{O}_{L_p}^\times,T\right)}\mathcal{N}_{L/\Q}(\disc(\phi))^{-s}\right).
\]
Applying Proposition \ref{prop:GeneralDirichlet} will produce a single rightmost pole, which gives this lower bound the correct order of magnitude.

\subsection{Comparing the methods}

This adaptation of Wright's proof by appealing to the fundamental class has some major benefits:
\begin{enumerate}
\item{It does not suffer from the same obstruction as Wiles' theorem (namely that the local conditions $L_p\le H^1(K_p,T)$ must be \emph{subgroups} instead of subsets), which as a result produce the asymptotic main term on the nose for the discriminant ordering.}

\item{The method is already well established in \cite{wright1989,wood2009} and shows that class field theory is still behind the scenes of these results.}
\end{enumerate}

The methods in the main body of the paper using Wiles' theorem, in addition to being a new approach to number field counting, are highlighted in part due to the ease of which they produce general results for restricted local conditions (this is already built into Wiles' theorem!) and in part for the clarity they provide for future work.

In Wright's original paper, the sum of characters $\chi\in A^{\vee}$ is evaluated by choosing roots of unity and forcing a noncanonical isomorphism between $A^{\vee}$ and some product $\prod_{i} \mathcal{O}_S^{\times}/(\mathcal{O}_S^{\times})^{n_i}$ in order to relate the characters $\chi$ to Dirichlet characters. While this is sufficient to produce the asymptotic main term of the counting function, it obscures the nature of the finite sum of Euler products as the sum over a dual object or as the sum over a class group-like object. Proposition \ref{prop:Eulerproduct} shows that the finite sum can be interpreted as a finite sum over the dual Selmer group, $H^1_{\mathcal{L}(0)^*}(K,T^*)$. Looking forward towards proving uniformity in step 2, the dual Selmer group can be understood as a class group object over $L(\mu_{|T|})$, which readily makes it clear how the length of this sum depends on the intermediate extension $L/K$ (in particular, we get the trivial bound $\mathcal{N}_{K/\Q}(\disc(L(\mu_{|T|})/K))^{1/2+\epsilon}$ on the length of the sum).

We also remark that the Wiles' theorem approach highlights the new question of counting cohomology groups with nontrivial actions, $H^1(K,T)$, as related to counting towers of number fields. We saw in Section \ref{sec:towers} has a similar local structure so as to generalize Malle's conjecture to this setting, and aside from the application to towers this question becomes an interesting generalization in its own right. The sketch in this appendix does not require counting coclasses in $H^1$, and so misses out on an entire new generalization of number field counting.

\bibliographystyle{alpha}
\bibliography{Statistics of the First Galois Cohomology Group.bbl}

\begin{thebibliography}{LOWW19}

\bibitem[AK17]{alberts-klys2017}
Brandon Alberts and Jack Klys.
\newblock The distribution of {$H_{8}$}-extensions of quadratic fields, Jun
  2017.
\newblock Preprint available at \url{https://arxiv.org/abs/1611.05595}.

\bibitem[Alb20]{alberts2020}
Brandon Alberts.
\newblock The weak form of {Malle}'s conjecture and solvable groups.
\newblock {\em Research in Number Theory}, 6(10), Mar 2020.

\bibitem[Bai80]{baily1980}
Andrew~Marc Baily.
\newblock On the density of discriminants of quartic fields.
\newblock {\em Journal für die reine und angewandte Mathematik (Crelles
  Journal)}, 1980(315):190–210, 1980.

\bibitem[Bha07]{bhargava2007}
Manjul Bhargava.
\newblock Mass formulae for extensions of local fields, and conjectures on the
  density of number field discriminants.
\newblock {\em International Mathematics Research Notices}, 17, July 2007.

\bibitem[Bou03]{bourbaki2003}
Nicolas Bourbaki.
\newblock {\em Algebra II: Chapters 4-7}.
\newblock Springer-Verlag, 2003.

\bibitem[BSW15]{bhargava-shankar-wang2015}
Manjul Bhargava, Arul Shankar, and Xiaoheng Wang.
\newblock Geometry-of-numbers methods over global fields {I}: Prehomogeneous
  vector spaces, Dec 2015.
\newblock Preprint available at \url{https://arxiv.org/abs/1512.03035}.

\bibitem[BSW16]{bhargava-shankar-wang2016}
Manjul Bhargava, Arul Shankar, and Xiaoheng Wang.
\newblock Squarefree values of polynomial discriminants {I}, Nov 2016.
\newblock Preprint available at \url{https://arxiv.org/pdf/1611.09806}.

\bibitem[BW07]{bhargava-wood2007}
Manjul Bhargava and Melanie~Matchett Wood.
\newblock The density of discriminants of {$S_3$}-sextic number fields.
\newblock {\em Proceedings of the American Mathematical Society},
  136(05):1581--1588, Dec 2007.

\bibitem[CF10]{cassels-frohlich2010}
J.~W.~S. Cassels and A.~Fr\"ohlich.
\newblock {\em Algebraic number theory}.
\newblock London Mathematical Society, 2010.

\bibitem[CYDO02]{cohen-diaz-y-diaz-olivier2002}
Henri Cohen, Francisco~Diaz Y~Diaz, and Michel Olivier.
\newblock Enumerating quartic dihedral extensions of {$\Q$}.
\newblock {\em Compositio Mathematica}, 133(1):65--93, Aug 2002.

\bibitem[DDT95]{darmon-diamond-taylor1995}
H.~Darmon, F.~Diamond, and R.~Taylor.
\newblock Fermat’s last theorem.
\newblock {\em Current Developments in Mathematics}, 1995(1):1–154, 1995.

\bibitem[Del54]{delange1954}
Hubert Delange.
\newblock G\'eén\'eéralisation du th\'eéor\`eème de {Ikehara}.
\newblock {\em Annales scientifiques de l'\'EÉcole Normale Sup\'eérieure},
  71(3):213--242, 1954.

\bibitem[Dum18]{dummit2018}
Evan~P. Dummit.
\newblock Counting $g$-extensions by discriminant.
\newblock {\em Mathematical Research Letters}, 25(4):1151–1172, 2018.

\bibitem[DW88]{datskovsky-wright1988}
Boris Datskovsky and David~J. Wright.
\newblock Density of discriminants of cubic extensions.
\newblock {\em Journal f\"ur die reine und angewandte Mathematik}, pages
  116--138, 1988.

\bibitem[EV05]{ellenberg-venkatesh2005}
Jordan~S. Ellenberg and Akshay Venkatesh.
\newblock Counting extensions of function fields with bounded discriminant and
  specified galois group.
\newblock {\em Progress in Mathematics Geometric Methods in Algebra and Number
  Theory}, 235:151–168, 2005.

\bibitem[FLN18]{frei-loughran-newton2018}
Christopher Frei, Daniel Loughran, and Rachel Newton.
\newblock The {H}asse norm principle for abelian extensions.
\newblock {\em American Journal of Mathematics}, 140(6):1639–1685, 2018.

\bibitem[FLN19]{frei-loughran-newton2019}
Christopher Frei, Daniel Loughran, and Rachel Newton.
\newblock Number fields with prescribed norms, 2019.

\bibitem[Kl{\"u}05a]{kluners2005}
J{\"u}rgen Kl{\"u}ners.
\newblock A counter example to {Malle's} conjecture on the asymptotics of
  discriminants.
\newblock {\em Comptes Rendus Mathematique}, (6):411--414, 2005.

\bibitem[Kl{\"u}05b]{klunersHab2005}
J{\"u}rgen Kl{\"u}ners.
\newblock {\em \"Uber die Asymptotik von Zahlk\"orpern mit vorgegebener
  {G}aloisgruppe}.
\newblock Shaker, 2005.

\bibitem[Kl{\"u}06]{kluners2006}
J{\"u}rgen Kl{\"u}ners.
\newblock Asymptotics of number fields and the {Cohen-Lenstra} heuristics.
\newblock {\em Journal de Th\'eorie des Nombres de Bordeaux}, 18:607--615,
  2006.

\bibitem[Kl{\"u}12]{kluners2012}
J{\"u}rgen Kl{\"u}ners.
\newblock The distribution of number fields with wreath products as {Galois}
  groups.
\newblock {\em International Journal of Number Theory}, 08(3):845--858, 2012.

\bibitem[KM04]{kluners-malle2004}
J{\"u}rgen Kl{\"u}ners and Gunter Malle.
\newblock Counting nilpotent {G}alois extensions.
\newblock {\em Journal f\"ur die reine und angewandte Mathematik (Crelle's
  Journal)}, 2004(572):1--26, 2004.

\bibitem[KMTB15]{kaplan-marcinek-takloo-bighash2015}
Nathan Kaplan, Jake Marcinek, and Ramin Takloo-Bighash.
\newblock Distribution of orders in number fields.
\newblock {\em Research in the Mathematical Sciences}, 2(1), Oct 2015.

\bibitem[Koc00]{koch2000}
Helmut Koch.
\newblock {\em Number theory: algebraic numbers and functions}.
\newblock American Mathematical Society, 2000.

\bibitem[LOWW19]{lemke-oliver-jwang-wood2019}
Robert Lemke~Oliver, Jiuya Wang, and Melanie~Matchett Wood.
\newblock Upcoming preprint on extensions with wreath products as {G}alois
  groups., 2019.

\bibitem[Mal02]{malle2002}
Gunter Malle.
\newblock On the distribution of {Galois} groups.
\newblock {\em Journal of Number Theory}, 92(2):315--329, 2002.

\bibitem[Mal04]{malle2004}
Gunter Malle.
\newblock On the distribution of {Galois} groups, {II}.
\newblock {\em Experimental Mathematics}, 13(2):129--135, 2004.

\bibitem[PTBW17]{pierce-turnage-butterbaugh-wood2017}
Lillian Pierce, Caroline Turnage-Butterbaugh, and Melanie~Matchett Wood.
\newblock An effective chebotarev density theorem for families of number
  fields, with an application to $\ell$-torsion in class groups, Sep 2017.
\newblock to appear in \emph{Inventiones Mathematicae}.

\bibitem[Sch95]{schmidt1995}
Wolfgang~M. Schmidt.
\newblock Number fields of given degree and bounded discriminants.
\newblock {\em Ast\'erisque}, 228:189--195, 1995.

\bibitem[Ser08]{serre2008}
Jean-Pierre Serre.
\newblock {\em Topics in {Galois} Theory}.
\newblock A K Peters, 2008.

\bibitem[Ser12]{serre2012}
Jean-Pierre Serre.
\newblock {\em Lectures on Nx(p)}.
\newblock CRC Press, 2012.

\bibitem[SV19]{shankar-varma2019}
Arul Shankar and Ila Varma.
\newblock Upcoming preprint on octic {$D_4$}-extensions., 2019.

\bibitem[T{\"u}r15]{turkelli2015}
Seyfi T{\"u}rkelli.
\newblock Connected components of {Hurwitz} schemes and {Malle}'s conjecture.
\newblock {\em Journal of Number Theory}, 155:163--201, 2015.

\bibitem[Wan17]{jwang2017}
Jiuya Wang.
\newblock Malle's conjecture for {$S_n\times A$} for $n=3,4,5$, Oct 2017.
\newblock Preprint available at \url{https://arxiv.org/abs/1705.00044}.

\bibitem[Wil95]{wiles1995}
Andrew Wiles.
\newblock Modular elliptic curves and {F}ermat's {L}ast {T}heorem.
\newblock {\em The Annals of Mathematics}, 141(3):443, 1995.

\bibitem[Woo09]{wood2009}
Melanie~Matchett Wood.
\newblock On the probabilities of local behaviors in abelian field extensions.
\newblock {\em Compositio Mathematica}, 146(01):102--128, Aug 2009.

\bibitem[Woo17]{wood2017}
Melanie~Matchett Wood.
\newblock Nonabelian {Cohen-Lenstra} moments, Feb 2017.
\newblock Preprint available at \url{https://arxiv.org/abs/1702.04644}.

\bibitem[Wri89]{wright1989}
David~J. Wright.
\newblock Distribution of discriminants of abelian extensions.
\newblock {\em Proceedings of the London Mathematical Society}, 58(1):17--50,
  1989.

\end{thebibliography}

\end{document}